\newtheorem{thm}{Theorem}[section]
\newtheorem{lem}[thm]{Lemma}
\newtheorem{cor}[thm]{Corollary}
\newtheorem{prop}[thm]{Proposition}
\newtheorem*{claim}{Claim}
\newtheorem*{mainA}{Theorem~\ref{mainA}}
\newtheorem*{mainC}{Theorem~\ref{mainC}}
\newtheorem*{KeyProp}{Proposition~\ref{prop:inheritFiniteDim}}
\theoremstyle{definition}
\newtheorem{defn}[thm]{Definition}
\newtheorem{exmp}[thm]{Example}
\newcommand{\neb}{\mathcal N}
\DeclareMathOperator{\Aut}{Aut}
\newcommand{\homology}{\ensuremath{{\sf{H}}}}
\newcommand{\dist}{\textup{\textsf{d}}}
\newcommand{\field}[1]{\mathbb{#1}}
\newcommand{\integers}{\ensuremath{\field{Z}}}
\newcommand{\reals}{\ensuremath{\field{R}}}
\newcommand{\wt}[1]{\widetilde{#1}}
\newcommand{\la}{\langle}
\newcommand{\ra}{\rangle}
\DeclareMathOperator{\Isom}{Isom}
\DeclareMathOperator{\stab}{Stab}
\newcommand{\hull}{\text{\sf hull}}
\begin{document}
\title{Classifying Virtually Special Tubular Groups}
\author{Daniel J. Woodhouse}
\email{daniel.woodhouse@mail.mcgill.ca}

\begin{abstract}
  A group is tubular if it acts on a tree with $\mathbb{Z}^2$ vertex stabilizers and $\mathbb{Z}$ edge stabilizers.
  We prove that a tubular group is virtually special if and only if it acts freely on a locally finite CAT(0) cube complex.
  Furthermore, we prove that if a tubular group acts freely on a finite dimensional CAT(0) cube complex, then it virtually acts freely on a three dimensional CAT(0) cube complex.
\end{abstract}

\maketitle

\section{Introduction}

A \emph{tubular group} $G$ splits as a graph of groups with $\mathbb{Z}^2$ vertex groups and $\mathbb{Z}$ edge groups.
Equivalently, $G$ is the fundamental group of a graph of spaces, denoted by $X$, with each vertex space homeomorphic to a torus and each edge space homeomorphic to $S^1\times [-1,1]$.
The graph of spaces $X$ is a \emph{tubular space}.
In this paper all tubular groups will be finitely generated and therefore have compact tubular spaces.

Tubular groups have been studied from various persectives:
Brady and Bridson provided tubular groups with isoperimetric function $n^{\alpha}$ for all $\alpha$ in a dense subset of $[2, \infty)$ in \cite{BradyBridson00}.
Cashen determined when two tubular groups are quasi-isometric \cite{Cashen10}.
Wise determined whether or not a tubular group acts freely on a CAT(0) cube complex \cite{Wise13}, and classified which tubular groups are cocompactly cubulated.
The author determined a criterion for finite dimensional cubulations \cite{Woodhouse14}.
Button has proven that all free-by-cyclic groups that are also tubular groups act freely on finite dimensional cube complexes~\cite{Button}.
The main theorem of this paper is:

\begin{mainA}
 A tubular group $G$ acts freely on a locally finite CAT(0) cube complex if and only if $G$ is virtually special.
\end{mainA}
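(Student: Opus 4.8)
The plan is to prove the two implications separately; the reverse implication is routine and essentially all of the content lies in the forward direction (a free action on a locally finite CAT(0) cube complex forces virtual specialness).

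\textbf{From virtually special to a locally finite cubulation.} If $G$ is virtually special then a finite-index subgroup $H \le G$ is the fundamental group of a compact special cube complex $Y$; its universal cover $\widetilde Y$ is then a CAT(0) cube complex on which $H$ acts freely and cocompactly, hence $\widetilde Y$ is locally finite. To pass from $H$ to $G$ I would use the standard coinduction construction: letting $g_1,\dots,g_n$ be coset representatives for $G/H$, the group $G$ acts freely on the finite product $\prod_{i=1}^n \widetilde Y$ via the cocycle action induced by $G \curvearrowright G/H$ (freeness uses that tubular groups, and hence $G$, are torsion-free). A finite product of locally finite CAT(0) cube complexes is again a locally finite CAT(0) cube complex, so $G$ acts freely on one. (If one reads ``virtually special'' in the weaker sense that a finite-index subgroup merely acts freely on a locally finite special cube complex, not necessarily compact, the identical argument applies since a finite product of special cube complexes is special.)

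\textbf{From a locally finite cubulation to virtual specialness: reducing to an equitable-set cubulation.} Here I would begin from Wise's analysis of cubulations of tubular groups together with the finite-dimensionality criterion of \cite{Woodhouse14}. A free $G$-action on a CAT(0) cube complex is dual to a $G$-invariant wallspace, which on the tubular space $X$ is recorded by a collection of immersed walls in the vertex tori whose restrictions to the edge annuli match up --- Wise's ``equitable sets.'' The first step is to show that local finiteness of the dual cube complex translates into strong combinatorial control on the wall collection: only finitely many slopes occur in each vertex torus, each with finite multiplicity, and every compact subset of $X$ meets only finitely many walls. Concretely, I expect this to identify the wallspace with the rational wallspace attached to a finite integral solution of Wise's crossing equations, which is exactly the data to which the criterion of \cite{Woodhouse14} applies; in particular the dual cube complex may be taken finite-dimensional as well as locally finite.

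\textbf{Removing hyperplane pathologies in a finite cover.} With a locally finite, finite-dimensional cubulation in hand, the second step is to eliminate the four Haglund--Wise obstructions to specialness after passing to a finite-index subgroup of $G$. One-sided hyperplanes are removed by doubling walls (replacing each immersed line of rational slope in a torus by two parallel copies), which preserves local finiteness. Self-intersecting and directly self-osculating hyperplanes cannot arise inside a single torus, where the walls are embedded closed geodesics, so such a pathology comes from a wall that travels through several edge annuli and returns; the stabilizer of the offending hyperplane is commensurable with a subgroup of a $\mathbb{Z}^2$ vertex group or a $\mathbb{Z}$ edge group, and once one knows such subgroups are separable in $G$, the Haglund--Wise separability criterion produces a finite-index subgroup in which that pathology disappears; inter-osculation is handled the same way. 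Since there are only finitely many $G$-orbits of such hyperplane configurations, intersecting finitely many of these covers yields a finite-index subgroup of $G$ acting freely on a special cube complex, so $G$ is virtually special.

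\textbf{Main obstacle.} I expect the crux to be the first half of the forward direction: extracting, from the bare hypothesis that $G$ acts freely on \emph{some} locally finite CAT(0) cube complex, enough rigidity to recognize the action as one of Wise's equitable-set cubulations and to invoke \cite{Woodhouse14}. A secondary but genuine difficulty is that tubular groups need not be hyperbolic, so one cannot cite separability of quasiconvex subgroups from hyperbolic geometry; one must instead establish directly that the hyperplane stabilizers occurring here --- which are virtually cyclic or virtually $\mathbb{Z}^2$ --- are separable in $G$, presumably by exploiting the graph-of-groups structure together with the residual finiteness that the existence of the cubulation itself should force.
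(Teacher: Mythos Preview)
Your reverse implication is fine (the paper takes the even shorter route of noting that a virtually special group embeds in a right-angled Artin group and hence acts freely on the locally finite universal cover of the Salvetti complex, but your coinduction argument also works).

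The forward direction, however, has a genuine gap in its second step. Your claim that ``the stabilizer of the offending hyperplane is commensurable with a subgroup of a $\mathbb{Z}^2$ vertex group or a $\mathbb{Z}$ edge group'' is false for horizontal walls. An immersed horizontal wall $\Lambda_i$ is a graph of spaces with infinite cyclic vertex groups (the circles) and trivial edge groups (the arcs), so $\pi_1\Lambda_i$ is a free group whose rank can be arbitrarily large; the stabilizer of a horizontal wall in $\widetilde X$ is the image of this free group in $G$, not a virtually abelian group. So the separability problem you would need to solve is for these free subgroups, and you have no a~priori residual finiteness of $G$ to lean on (indeed, residual finiteness is a \emph{consequence} of the theorem). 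A second problem is that the Haglund--Wise separability criterion you invoke is stated for cocompact actions; here $G$ acts on the dual $C(\widetilde X,\mathcal W)$ with $\mathbb{Z}^2$ vertex groups acting on vertex pieces $\widetilde Z_{\widetilde v}\hookrightarrow\mathbf R^{d_{\widetilde v}}$ with $d_{\widetilde v}$ typically larger than $2$, so the action is essentially never cocompact, and one cannot simply intersect finitely many covers to kill all osculation configurations.

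The paper avoids both issues by a completely different mechanism. After producing (via an \emph{amicable immersion} $X\to Y$ and fiber products with the hyperplanes of $Y$) immersed walls that are non-dilated and fortified, and after replacing them by \emph{primitive} walls, it shows that the resulting dual $\widetilde Z=C(\widetilde X,\mathcal W)$ admits a $G$-equivariant embedding $\widetilde Z\hookrightarrow\mathbf R^d\times\widetilde\Gamma$. This gives a homomorphism $G\to\mathbb{Z}^d\rtimes\Aut([-1,1]^d)\times\Aut(\widetilde\Gamma)$, and the required finite-index subgroup is read off as the preimage of a suitable finite-index $(D\mathbb{Z})^d$ under the projection to $\mathbb{Z}^d\rtimes\Aut([-1,1]^d)$; the hyperplane pathologies are then checked by hand using the product structure, with no appeal to subgroup separability. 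Your outline correctly identifies the first step (passing from an arbitrary locally finite cubulation to an equitable-set cubulation) as the crux, and indeed the paper devotes an entire section to this; but your proposed second step would not go through as written.
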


Haglund and Wise introduced special cube complexes in \cite{HaglundWise08}.
The main consequence of a group being special is that it embeds in a right angled Artin group (see \cite{Wise09} or \cite{Wise12} for a full outline of Wise's program).

\subsection{Structure of the Paper}

In~\cite{Wise13} Wise obtained free actions of tubular groups on CAT(0) cube complexes by first finding \emph{equitable sets} that allow the construction of \emph{immersed walls}.
Such a set of immersed walls determines a \emph{wallspace} $(\wt{X},\mathcal{W})$ which yields a dual cube complex $C(\wt{X},\mathcal{W})$ which $G$ acts freely on.
Wallspaces were first introduced by Haglund and Paulin \cite{HaglundPaulin98}, and the dual cube complex construction was first developed by Sageev \cite{Sageev95}.
In~\cite{Woodhouse14} the author defined a criterion, called \emph{dilation}, that determines if an immersed wall produces infinite or finite dimensional cubulations.
More precisely, if the immersed walls are \emph{non-dilated}, then $C(\wt{X},\mathcal{W})$ is finite dimensional.
We recall the relevant definitions and background in Section~\ref{section:TubularGroups}.

Section~\ref{section:primitive} establishes a technical result using techniques from~\cite{Woodhouse14}.
It is shown that that immersed walls can be replaced with \emph{primitive} immersed walls without losing the finite dimensionality or local finiteness of the associated dual cube complex.
The reader is encouraged to either read this section alongside~\cite{Woodhouse14}, or skip it on a first reading.

In Section~\ref{section:FiniteDimensional} we analyse $C(\wt{X},\mathcal{W})$ in the finite dimensional case to establish a set of conditions that imply that $G \backslash C(\wt{X},\mathcal{W})$ is virtually special.
We decompose $C(\wt{X},\mathcal{W})$ as a tree of spaces, with the same underlying tree as $\wt{X}$ and then, under the assumption that the walls are \emph{primitive}, we show that $C(\wt{X},\mathcal{W})$ maps $G$-equivariantly into $\mathbf{R}^d \times \wt{\Gamma}$, where $\mathbf{R}$ is the standard cubulation of $\reals$, and $\wt{\Gamma}$ is the underlying graph of $\wt{X}$.
A further criterion, the notion of a \emph{fortified} immersed wall, determines when $C(\wt{X},\mathcal{W})$ is locally finite.
Combining these results allow us to give criterion for $G \backslash C(\wt{X},\mathcal{W})$ to be virtually special.

In Section~\ref{section:RevisitingEquitable} we consider a tubular group acting freely on a CAT(0) cube complex $\wt{Y}$.
We show that we can obtain from such an action immersed walls that preserve the important properties of $\wt{Y}$.
More precisely, we prove the following:

\begin{KeyProp}
 Let $G$ be a tubular group acting freely on a CAT(0) cube complex $\wt{Y}$.
 Then there is a tubular space $X$ and a finite set of immersed walls in $X$.
 Moreover, if $(\wt{X}, \mathcal{W})$ is the associated wallspace, then
 \begin{enumerate}
    \item $G$ acts freely on $C(\wt{X}, \mathcal{W})$.
 	\item \label{claim:finDim}  $C(\wt{X}, \mathcal{W})$ is finite dimensional if $\wt{Y}$ is finite dimensional.
 	\item \label{claim:locFin} $C(\wt{X}, \mathcal{W})$ is finite dimensional and locally finite if $\wt{Y}$ is locally finite.
 \end{enumerate}
\end{KeyProp}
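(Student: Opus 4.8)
The plan is to fix a tubular space $X$ with $\pi_1 X \cong G$, transport the hyperplane structure of $\wt Y$ back to $\wt X$ along a $G$--equivariant map, and extract from this a \emph{finite} family of immersed walls whose dual cube complex reflects the relevant features of $\wt Y$. Throughout, the results of Section~\ref{section:primitive} will let us assume the walls produced are \emph{primitive} without disturbing finite dimensionality or local finiteness, and those of Section~\ref{section:FiniteDimensional} will supply the criterion (\emph{fortified} walls) turning finite dimensionality into local finiteness.

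\emph{Step 1: a $G$--equivariant map and the pulled-back walls.} Take $X$ a tubular space with fundamental group $G$, so that $\wt X$ is a tree of planes $\wt T_v \cong \reals^2$ glued along strips $\wt e \cong \reals \times [-1,1]$. Since $\wt X$ is contractible, built as an increasing union of convex pieces, and $\wt Y$ is $\mathrm{CAT}(0)$, a standard equivariant extension over the tree of pieces (using geodesics in $\wt Y$) produces a $G$--equivariant map $f \colon \wt X \to \wt Y$; after an equivariant homotopy we may take $f$ transverse to the hyperplanes of $\wt Y$ and straight on each plane. For a hyperplane $\hat H$ of $\wt Y$, the preimage $f^{-1}(\hat H)$ is a disjoint union of two--sided properly embedded tracks in $\wt X$, and after straightening its trace on a plane $\wt T_v$ is a $\stab_{G_v}(\hat H)$--invariant family of parallel lines whose direction $\vec d_{\hat H}\in\mathbb P(G_v)$ is transverse to any $c\in G_v$ whose combinatorial axis crosses $\hat H$. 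Each component of $f^{-1}(\hat H)$, modulo its $G$--stabilizer, is an immersed wall in $X$; the matching across an edge strip is automatic, since a two--sided track in $\reals\times[-1,1]$ has equal intersection number with the two boundary circles.

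\emph{Step 2: finitely many hyperplane orbits suffice for a free action.} The underlying graph of $X$ being finite, there are finitely many vertex groups $G_v\cong\integers^2$ up to conjugacy. Freeness of $G_v\curvearrowright\wt Y$ forces $G_v$ to cross hyperplanes in at least two non--parallel directions --- otherwise $G_v$ would, up to finite index, preserve the halfspaces of a single direction and hence be virtually cyclic. Choosing for each $v$ two hyperplanes realising transverse directions, together with, for each edge $e$ at $v$, one hyperplane whose axis $c_e$ crosses, gives finitely many $G$--orbits $\hat H_1,\dots,\hat H_k$; their straightened preimages give two transverse $G_v$--cocompact line families on each $\wt T_v$, so the local dual complex there is a product of two unbounded trees carrying a free $G_v$--action, and by the usual properness criterion for Sageev's construction over a graph of spaces this assembles to a free $G$--action on $C(\wt X,\mathcal W)$, where $\mathcal W$ is the resulting wallspace. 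This proves~(1); one checks in the process that these walls can be arranged to be fortified.

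\emph{Step 3: inheriting dimension and local finiteness, and the main obstacle.} If walls $W,W'$ of $\mathcal W$ cross in $\wt X$, then $f$ carries a crossing point into $N(\hat H)\cap N(\hat H')$, so the associated hyperplanes cross in $\wt Y$; hence $n$ pairwise crossing walls yield $n$ pairwise crossing hyperplanes, giving $\dimension C(\wt X,\mathcal W)\le\dimension\wt Y$ and establishing~(2). For~(3), local finiteness of $C(\wt X,\mathcal W)$ reduces to: each wall crosses only finitely many others --- which transfers directly from the corresponding property of hyperplanes in a locally finite $\wt Y$ --- and only finitely many walls meet each compact region of $\wt X$ --- which holds because $\mathcal W$ has finitely many $G$--orbits of walls, each with cocompact trace, so that each plane is met by finitely many line families and each edge strip is crossed only finitely often per period. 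The delicate point, and the main obstacle, is controlling accumulation of the pulled-back walls: one must ensure $f$ is proper enough (for instance by first replacing $\wt Y$ with a $G$--cocompact convex subcomplex, or by a direct argument that the chosen $f$ cannot let infinitely many translates $g\hat H_i$ cluster in $f(\wt T_v)$) while keeping the dual $G$--action free. The remaining verifications --- primitivity, fortification, and the translation-length bookkeeping behind the edge matchings --- are routine given Sections~\ref{section:primitive} and~\ref{section:FiniteDimensional}.
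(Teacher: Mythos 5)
Your overall strategy --- pull the hyperplanes of $\wt{Y}$ back to a tubular space and read off an equitable set --- is the paper's strategy, but two of the steps you label as routine or deferred are exactly where the substance of the proof lies, and as written they are gaps. The most serious one is part (3). Your criterion for local finiteness of $C(\wt{X},\mathcal{W})$ (``each wall crosses only finitely many others, and only finitely many walls meet each compact region'') is not the right one and fails in both directions: in any of these cubulations a vertical wall $\wt{\Lambda}_{\wt{e}}$ crosses the infinitely many horizontal walls $\{h^r\wt{\Lambda}\}$ meeting $\wt{X}_{\wt{e}}$, yet the dual can still be locally finite; conversely, in the example $\langle a,b,t \mid [a,b], tat^{-1}=a\rangle$ discussed after Proposition~\ref{prop:virtually special} only finitely many wall orbits meet each vertex space and yet the dual is $\mathbf{R}^2\times\wt{\Gamma}$, which is not locally finite. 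The correct criterion is Proposition~\ref{prop:fortified}: the dual is locally finite iff the walls are \emph{fortified}, i.e.\ for every edge there is a wall meeting the adjacent vertex spaces parallel to the edge space. Your assertion that ``one checks in the process that these walls can be arranged to be fortified'' begs the question --- fortification cannot be arranged, it must be \emph{present in $\wt{Y}$}, and your own Step~2 (keeping only two transverse directions per vertex plus one crossing hyperplane per edge) actively discards any parallel hyperplanes that would witness it. The paper's proof of (3) consists of (i) a separate argument (the Claim inside Proposition~\ref{prop:inheritFiniteDim}, via compact convex cubical neighborhoods $U^n(C)$ and a separating hyperplane) that local finiteness of $\wt{Y}$ already forces finite dimensionality of the dual, and (ii) Lemma~\ref{lem:fortifiedSpecialCase} / Corollary~\ref{cor:fortifiedGeneralCase}, which show that a locally finite $\wt{Y}$ \emph{must} contain, for each edge, a hyperplane parallel to that edge space --- proved by exhibiting, if not, a free group $F=\langle g_1^n,g_2^m\rangle$ acting freely on a product $\hull(\wt{X}_{\wt{v}_1})\times K$ with $K$ compact, contradicting polynomial growth. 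None of this is in your proposal.

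The second gap is the one you flag yourself as ``the main obstacle'' and then leave open: the existence of a sufficiently controlled $G$-equivariant map $\wt{X}\to\wt{Y}$. This is not a transversality/straightening formality. The paper needs each hyperplane of $\wt{Y}$ to meet each vertex plane $\wt{X}_{\wt{v}}$ in at most a \emph{single} geodesic line of a genuine Euclidean structure (an \emph{amicable immersion}), and this rests on Theorem~\ref{thm:isometricTori} from \cite{Woodhouse16} (isometrically embedded products of cubical quasilines for the $\mathbb{Z}^2$ vertex groups, with codimension-1 hyperplane stabilizers) together with the Flat Torus Theorem; it also replaces $\wt{Y}$ by $\hull(\wt{X})$ so that compactness of $X$ yields finitely many hyperplane orbits, rather than hand-picking a subfamily. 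Without this control, your ``family of parallel lines'' traced by one hyperplane on one plane, and the claim that crossings of walls force crossings of hyperplanes (which in the paper also requires separating \emph{regular} from non-regular intersections via Proposition~\ref{prop:InfDimInfCube}, after the perturbation that makes coincident circles disjoint), are unsupported. In short: part (1) and the skeleton of part (2) are recoverable from your outline, but the proof of part (3) and the construction of the map itself are missing, not routine.
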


\noindent This Proposition is sufficient to allow us to prove~\ref{mainA}.

In Section~\ref{section:VirtualCubicalDimension} we further exploit the results obtained in Section~\ref{section:FiniteDimensional} to obtain the following, demonstrating that the cubical dimension of tubular groups with finite dimensional cubulations are virtually within $1$ of their cohomological dimension.

 \begin{mainC}
  A tubular group acting freely on a finite dimensional CAT(0) cube complex has a finite index subgroup that acts freely on a 3-dimensional CAT(0) cube complex.
 \end{mainC}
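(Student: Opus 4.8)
The plan is to run the following reduction. First I would extract from the hypothesis a dual cube complex carrying the tree-of-spaces structure of Section~\ref{section:FiniteDimensional}, realise it inside a model of the form $\mathbf{R}^{d}\times\wt{\Gamma}$, and then replace the Euclidean factor $\mathbf{R}^{d}$ by $\mathbf{R}^{2}$ by means of a homomorphism $\mathbb{Z}^{d}\to\mathbb{Z}^{2}$ in general position, paying for this with a finite index subgroup. The conceptual point is that a tubular group can never embed one of its $\mathbb{Z}^{2}$ vertex groups into $\mathbb{Z}$, so a $2$-dimensional model over the Bass--Serre tree is out of the question; but as soon as $G$ acts freely on \emph{some} finite dimensional CAT(0) cube complex, each vertex group does embed into one common copy of $\mathbb{Z}^{2}$, and that is precisely what is needed to build a $3$-dimensional model over the tree.

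In detail, I would first apply Proposition~\ref{prop:inheritFiniteDim} to the given free action on a finite dimensional CAT(0) cube complex, producing a tubular space $X$ and finitely many immersed walls for which $C(\wt{X},\mathcal{W})$ is finite dimensional and $G$ acts freely on it. By Section~\ref{section:primitive} we may take the immersed walls primitive, so Section~\ref{section:FiniteDimensional} supplies a $G$-equivariant cubical map $\Phi\colon C(\wt{X},\mathcal{W})\to\mathbf{R}^{d}\times\wt{\Gamma}$ which is injective on the fibre over each vertex of $\wt{\Gamma}$, the $i$-th coordinate of $\mathbf{R}^{d}$ recording the position of the $i$-th immersed wall. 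The induced action on the factor $\mathbf{R}^{d}$ is by cubical isometries, hence is a homomorphism $G\to\mathbb{Z}^{d}\rtimes W_{d}$ with $W_{d}$ finite (it records how elements of $G$ can reverse the two sides of an immersed wall); let $G'\leq G$ be the finite index kernel of the projection to $W_{d}$. Along $G'$ the action on $\mathbf{R}^{d}$ is by translations, given by a homomorphism $h\colon G'\to\mathbb{Z}^{d}$, and $h$ is injective on every vertex group $G'_{v}\cong\mathbb{Z}^{2}$: if $g\in G'_{v}$ and $h(g)=0$ then $g$ acts trivially on the target fibre over $v$, so it fixes the image of the vertex fibre of $\Phi$ over $v$ pointwise, whence $g=1$ by injectivity of $\Phi$ there and freeness of the action on $C(\wt{X},\mathcal{W})$.

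The heart of the argument is then a transversality observation. Set $\Lambda_{v}=h(G'_{v})$, a rank-$2$ subgroup of $\mathbb{Z}^{d}$; since $h$ is valued in an abelian group, $\Lambda_{gv}=\Lambda_{v}$ for all $g\in G'$, so the real spans $\Lambda_{v}\otimes\reals$ form a finite collection of $2$-planes indexed by the vertices of the finite graph $G'\backslash\wt{\Gamma}$. For one such plane, the $2\times d$ integer matrices $q$ whose real kernel meets it nontrivially are exactly the zeros of a single nonzero polynomial in the entries of $q$, so a matrix in general position gives a homomorphism $q\colon\mathbb{Z}^{d}\to\mathbb{Z}^{2}$ whose restriction to each $\Lambda_{v}$ is injective; fix such a $q$ and put $\psi=q\circ h\colon G'\to\mathbb{Z}^{2}$, which is then injective on every vertex group. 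Finally, $G'$ acts on the CAT(0) cube complex $\mathbf{R}^{2}\times\wt{\Gamma}$, where $\mathbf{R}^{2}$ is the standard cubulation of the plane acted on by $\psi$ through translations and $\wt{\Gamma}$ carries the Bass--Serre action. This complex is $3$-dimensional (it is not locally finite, but the statement does not require that), and the action is free: an element fixing a point $(x,\xi)$ fixes $\xi$, hence lies in some $G'_{v}$, and also lies in $\ker\psi$, hence is trivial. This would complete the proof.

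I expect the genuine work to lie in the second paragraph rather than in the transversality step. One must check, from the construction of Section~\ref{section:FiniteDimensional}, that the action on the $\mathbf{R}^{d}$-factor really is by cubical isometries and becomes an honest translation action after passing to finite index --- that is, that the only obstruction is the finite monodromy of the sides of the walls --- and that the vertex fibres of $\Phi$ sit inside $\mathbf{R}^{d}$ in such a way that freeness of the action on $C(\wt{X},\mathcal{W})$ genuinely forces $h$ to be injective on each $\mathbb{Z}^{2}$, and not merely to be injective on a proper coordinate subcomplex.
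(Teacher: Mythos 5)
Your proposal is correct, and it reaches the conclusion by a genuinely different route from the paper's. Both arguments start the same way (Proposition~\ref{prop:inheritFiniteDim}, Lemma~\ref{lem:primitive}, and the model $\mathbf{R}^d\times\wt{\Gamma}$ of Proposition~\ref{prop:RxT}), but the paper then passes to a finite index subgroup in which every vertex group survives as a direct summand of first homology (Lemma~\ref{firstHomSurvival}), uses projections of $\homology_1^{\textsf{v}}(G')\cong\mathbb{Z}^d$ onto the vertex groups to manufacture a \emph{new} equitable set with exactly two curves per vertex space, verifies the intersection-number identity via an $SL(2,\mathbb{Z})$ change of basis, and disposes of the case $d>2$ by attaching auxiliary edge spaces and inducting on $d$; the $3$-dimensional complex is then the dual of the resulting wallspace. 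You instead keep $\mathbf{R}^d\times\wt{\Gamma}$ itself, pass to the kernel $G'$ of the finite monodromy $G\to\Aut([-1,1]^d)$ so that the $\mathbf{R}^d$-action becomes a translation action $h\colon G'\to\mathbb{Z}^d$ that is injective on vertex groups (this is exactly the freeness argument the paper gives for the $G_{\wt{v}}$-action on $\mathbf{R}^{d_{\wt{v}}}$: an element translating by $0$ stabilizes every wall in $\mathcal{P}_{\wt{v}}$, hence fixes $\wt{Z}_{\wt{v}}$ pointwise, hence is trivial), and then compose with a generic integral projection $q\colon\mathbb{Z}^d\to\mathbb{Z}^2$ injective on the finitely many rank-$2$ lattices $h(G'_v)$; the diagonal action on $\mathbf{R}^2\times\wt{\Gamma}$ is free because $\psi=q\circ h$ is abelian-valued, so injectivity on vertex groups passes to all their conjugates, i.e.\ to every point stabilizer of the inversion-free Bass--Serre action. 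Your route is shorter: it dispenses with Lemma~\ref{firstHomSurvival}, with checking that the projected curves again form an equitable set, and with the induction on $d$, at the cost of producing the cube complex directly as $\mathbf{R}^2\times\wt{\Gamma}$ rather than as the dual of a wallspace on $\wt{X}$ --- which is harmless here, since the statement asks only for a free action on some $3$-dimensional CAT(0) cube complex. The two points you flag as the real content are indeed the only delicate ones, and both are already supplied by Section~\ref{section:FiniteDimensional}: the proof of Proposition~\ref{prop:virtually special} records that $G$ embeds in $\big(\mathbb{Z}^d\rtimes\Aut([-1,1]^d)\big)\times\Aut(\wt{\Gamma})$, so the $\mathbf{R}^d$-factor does carry an honest $G$-action with finite signed-permutation part, and the construction preceding Lemma~\ref{lem:GvEquivariance} gives the injectivity of $h$ on each $G'_{\wt{v}}$.
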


%
%
%

\noindent \textbf{Acknowledgements:} I would like to thank Dani Wise and Mark Hagen.

\section{Background Tubular Groups and their Cubulations} \label{section:TubularGroups}

Let $G$ be a tubular group with associated tubular space $X$ and underlying graph $\Gamma$.
Given an edge $e$ in a graph we will let $-e$ and $+e$ respectively denote the initial and terminal vertices of $e$.
Let $X_v$ and $X_e$ denote vertex and edge spaces in this graph of spaces.
Let ${X}_{-e}$ and ${X}_{+e}$ be the boundary circles of $X_e$, and denote the attaching maps by $\varphi_e^{-} : {X}_{e}^{-} \rightarrow X_{-e}$, and $\varphi_e^{+} : {X}_e^{+} \rightarrow X_{+e}$.
Note that $\varphi_e^{-}$ and $\varphi_e^{+}$ respectively represent generators of $G_e$ in $G_{{e}^{-}}$ and $G_{e^{+}}$.
We will let $\wt{X}$ denote the universal cover of $X$.
Let $\widetilde{X}_{\widetilde{v}}$ and $\widetilde{X}_{\widetilde{e}}$ denote vertex and edge spaces in the universal cover $\wt{X}$, and let $\wt{\Gamma}$ denote the Bass-Serre tree.
We will assume that each vertex space has the structure of a nonpositively curved geodesic metric space and that attaching maps $\varphi_e^{-}$ and $\varphi_e^{+}$ define locally geodesic curves in $X_{-e}$ and $X_{+e}$.

\subsection{Equitable Sets and Intersection Numbers}

Given a pair of closed curves in a torus $\alpha , \beta : S^1 \rightarrow T$, the \emph{intersection points} are the elements $(p,q) \in S^1 \times S^1$ such that $\alpha(p) = \beta(q)$.
For a pair of homotopy classes $[\alpha], [\beta]$ of closed curves in a torus $T$, their \emph{geometric intersection number} $\# \big[[\alpha], [\beta] \big]$ is the minimal number of intersection points realised by a pair of representatives from the respective classes.
This number is realised by any pair of geodesic representatives of the classes.
If $B = \{ [\beta_i] \}$ is a finite set of homotopy classes of curves in $T$, then $\#[\alpha, B] := \sum_i \#[ \alpha, \beta_i ]$.
Viewing $[\alpha]$ and $[\beta]$ as elements of $\pi_1T = \mathbb{Z}^2$, we can compute that $\#\big[[\alpha], [\beta]\big] = \det \big[[\alpha], [\beta]\big]$.
Given an identification of $\mathbb{Z}^2$ with $\pi_1T$, the elements of $\mathbb{Z}^2$ are identified with homotopy classes of curves in $T$, so it makes sense to consider their geometric intersection number.
An \emph{equitable set} for a tubular group $G$ is a collection of sets $\{ S_v \}_{v \in \Gamma}$, where $S_v$ is a finite set of distinct geodesic curves in $X_v$ disjoint from the attaching maps of adjacent edge spaces, such that $S_v$ generate a finite index subgroup of $\pi_1X_v = G_v$, and $\#\big[ \varphi_e^{-}, S_{{e}^{-}}\big] = \#\big[ {\varphi}_e^{+}, S_{{e}^+}]$.
Note that equitable sets can also be given with $S_v$ a finite subset of $G_v$ that generates a finite index subgroup of $G_v$ and satisfies the corresponding equality for intersection numbers.
This is how Wise formulates equitable sets, and its equivalence follows from exchanging elements of $G_v = \pi_1X_v$ with geodesic closed curves in $X_v$ that represent the corresponding elements.
An equitable set is \emph{fortified} if for each edge $e$ in $\Gamma$, there exists $\alpha^+_e \in S_{+e}$ and $\alpha^-_e \in S_{-e}$ such that $\#[ \alpha^+_e , \varphi^+_e] = \#[\alpha^-_e, \varphi^-_e] = 0$.
An equitable set is \emph{primitive} if every element $\alpha \in S_v$ represents a primitive element in $G_v$.

\subsection{Immersed Walls From Equitable Sets}

Immersed walls are constructed from \emph{circles} and \emph{arcs}.
For each $\alpha \in S_v$, let $S^1_{\alpha}$ be the domain of $\alpha$.
The disjoint union $\bigsqcup S^1_\alpha$ over all $\alpha \in S_v$ and $v\in V\Gamma$ are the \emph{circles}.
Since $\#[ \varphi_e^- , S_{e^{-}}] = \#[ \varphi^+_e , S_{e^+}]$, there exists a bijection from the intersection points between curves in $S_{-e}$ and $\varphi_e^{-}$, and the intersection points between curves in $S_{+e}$ and $\varphi^+_e$.
 Let $(p^-, q^-)\in S^1_{\alpha^-} \times X^-_e$ and $(p^+, q^+) \in S^1_{\alpha^+}$ be corresponding intersection points between $\alpha^{\pm} \in S_{\pm e}$ and $\varphi^{\pm}_e$.
 Then an \emph{arc} $a \cong [0,1]$ has its endpoints attached to $p^-$ and $p^+$.
 The endpoints of $a$ are mapped into $X_{-e}\cap X_e$ and $X_{+e} \cap X_e$, so the interior of $a$ can be embedded in $X_e$.
 After attaching an arc for each pair of corresponding intersection points, we obtain a set $\{\Lambda_1, \ldots, \Lambda_n\}$ of connected graphs that map into $X$, called \emph{immersed walls}.
Each graph $\Lambda_i$ has its own graph of groups structure with infinite cyclic vertex groups and trivial edge groups.
{ \bf As in~\cite{Woodhouse14}, all ``immersed walls'' in this paper are immersed walls constructed from equitable sets as above. }
This means that we are free use the results obtained in~\cite{Woodhouse14}.

A lift of $\wt{\Lambda}_i \rightarrow \wt{X}$ is a two sided embedding in $\wt{X}$, separating $\wt{X}$ into two \emph{halfspaces}.
The images of the lifts of $\wt{\Lambda}_i$ to $\wt{X}$ are \emph{horizontal walls} $\mathcal{W}_{\textsf{h}}$.
The \emph{vertical} walls $\mathcal{W}_{\textsf{v}}$ are obtained from the lifts of curves $\alpha_e: S^1 \rightarrow X_e$ given by the inclusion $S^1 \times \{0\} \hookrightarrow S^1 \times [-1, 1]$.
The set $\mathcal{W} = \mathcal{W}_{\textsf{h}} \sqcup \mathcal{W}_{\textsf{v}}$ of all horizonal and vertical walls gives a \emph{wallspace} $(\wt{X}, \mathcal{W})$ where the $G$-action on $\wt{X}$ also gives an action on $\mathcal{W}$.
The main theorem of \cite{Wise13} is that a tubular group acts freely on a CAT(0) cube complex if and only if there exists an equitable set.

A set of immersed walls is \emph{fortified} if they are obtained from a fortified equitable set.
A set of immersed walls is \emph{primitive} if they are obtained from a primitive equitable set.

Let $\wt{\Lambda}$ and $\wt{\Lambda}'$ be horizontal walls in $\wt{X}$.
An point $x \in \wt{\Lambda} \cap \wt{\Lambda}$ is a \emph{regular intersection point} if it lies in a vertex space $\wt{X}_{\wt{v}}$, and the lines $\wt{\Lambda} \cap \wt{X}_{\wt{v}}$ and $\wt{\Lambda}' \cap \wt{X}_{\wt{v}}$ are non-parallel.
Otherwise, a point $x \in \wt{\Lambda} \cap \wt{\Lambda}'$ is a \emph{non-regular intersection point}, and either $x \in \wt{X}_{\wt{v}}$ where $\wt{\Lambda} \cap \wt{X}_{\wt{v}} = \wt{\Lambda}' \cap \wt{X}_{\wt{v}}$, or $x \in \wt{X}_{\wt{e}}$.

An \emph{infinite cube} in a CAT(0) cube complex $\wt{Y}$, is an sequence of $c_0, c_1, \ldots, c_n, \ldots$ such that $c_n$ is an $n$-cube in $\wt{Y}$, and $c_n$ is a face of $c_{n+1}$.
In~\cite{Woodhouse14} a \emph{dilation function} is constructed for each immersed wall $R: \pi_1 \Lambda \rightarrow \mathbb{Q}^*$, and an immersed wall is said to be \emph{dilated} if $R$ has infinite image.
The following is Thm 1.2 from that paper:

\begin{thm} \label{mainTheoremFrom15}
 Let $X$ be tubular space, and $(\wt{X}, \mathcal{W})$ the wallspace obtained from a finite set of immersed walls in $X$. The following are equivalent: 
 \begin{enumerate}
  \item \label{infDimensional} The dual cube complex $C(\wt{X},\mathcal{W})$ is infinite dimensional.
  \item \label{infCube} The dual cube complex $C(\wt{X},\mathcal{W})$ contains an infinite cube.
  \item One of the immersed walls is dilated.
 \end{enumerate}
\end{thm}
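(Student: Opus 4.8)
\noindent\emph{Sketch of a proof.}
The implication $(\ref{infCube})\Rightarrow(\ref{infDimensional})$ is immediate, since an infinite cube contains an $n$-cube for every $n$. The plan is to close the cycle $(\ref{infDimensional})\Rightarrow(3)\Rightarrow(\ref{infCube})\Rightarrow(\ref{infDimensional})$, working throughout with Sageev's description of $C(\wt X,\mathcal{W})$: its $n$-cubes correspond to collections of $n$ pairwise-crossing walls together with a compatible orientation, and an infinite cube corresponds to an infinite pairwise-crossing collection admitting such an orientation. It helps to note first that the crossing combinatorics is carried by the horizontal walls. Each edge space $\wt X_{\wt e}$ is a copy of $\reals\times[-1,1]$ containing the single vertical wall $\reals\times\{0\}$, and for distinct edges $\wt e,\wt e'$ of $\wt\Gamma$ the edge space $\wt X_{\wt e'}$ lies in one halfspace of the vertical wall at $\wt e$; hence no two vertical walls cross, and a pairwise-crossing family contains at most one of them. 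Moreover, if two horizontal walls cross then the subtrees of $\wt\Gamma$ they traverse cannot be disjoint, and a pairwise-crossing family of horizontal walls all meeting a single vertex space $\wt X_{\wt v}$ along non-parallel lines has at most $\size{S_v}$ members.

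For $(\ref{infDimensional})\Rightarrow(3)$ we argue contrapositively. Assume no immersed wall is dilated, so every dilation homomorphism $R\colon\pi_1\Lambda\to\mathbb{Q}^{*}$ has finite image; as the finite subgroups of $\mathbb{Q}^{*}$ lie in $\{\pm1\}$, this image is $\{\pm1\}$ or trivial. By the analysis underlying the dilation function in \cite{Woodhouse14}, $R$ records the asymptotic growth of the number of arcs of a horizontal wall in the successive edge spaces it traverses, so when $R$ is $\{\pm1\}$-valued this count stays bounded along each wall by a constant depending only on the finite equitable set. Hence each horizontal wall meets every vertex space in a bounded, slope-controlled family of parallel lines, and its supporting subtree of $\wt\Gamma$ is convex. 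Given a pairwise-crossing family of horizontal walls, the supporting subtrees pairwise meet, so by the Helly property for subtrees of a tree they share a vertex $\wt v$; restricting to $\wt X_{\wt v}$ and combining bounded width with the finiteness of $S_v$ bounds the family by a constant depending only on the equitable set. With the at-most-one vertical wall, this bounds $\dimension C(\wt X,\mathcal{W})$, so $(\ref{infDimensional})$ fails.

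Finally, for $(3)\Rightarrow(\ref{infCube})$, suppose $\Lambda$ is dilated and choose $\gamma\in\pi_1\Lambda$ with $\size{R(\gamma)}=\lambda>1$; such $\gamma$ exists because $R$ maps the free group $\pi_1\Lambda$ onto an infinite subgroup of $\mathbb{Q}^{*}$. Let $\wt\Lambda$ be the lift of $\Lambda$ preserved by $\gamma$, so that $\gamma$ translates $\wt\Lambda$ along an axis $A\subseteq\wt\Gamma$. The crucial feature is that the supporting subtree of $\wt\Lambda$ is now far from convex: as $\wt\Lambda$ is traced along $A$ its arc count grows geometrically, comparable to $\lambda^{n}$, forcing $\wt\Lambda$ to return, at ever larger angles, to neighbourhoods of vertex spaces it has already crossed. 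This controlled failure of convexity is exactly what the Helly argument of the previous paragraph excludes in the non-dilated case, and it lets one assemble an infinite family of pairwise-crossing walls --- translates of $\wt\Lambda$ together with curves transverse to it --- no finite subfamily of which lies inside a single vertex space. Using the explicit periodic description of $\wt\Lambda$ supplied by $\gamma$ and \cite{Woodhouse14}, one then exhibits a consistent orientation of $\mathcal{W}$, satisfying the descending chain condition, that may be flipped on any finite subfamily; the resulting nested cubes $c_0\subset c_1\subset\cdots$ constitute an infinite cube. I expect this last step to be the main obstacle: turning the bare non-triviality of a dilation value into an explicit infinite pairwise-crossing family and an admissible orientation realising it requires precisely the quantitative control over how a dilated wall wraps through $\wt X$ provided by \cite{Woodhouse14}.
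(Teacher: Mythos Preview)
This theorem is not proved in the present paper: it is quoted as Theorem~1.2 of \cite{Woodhouse14}, with no argument supplied here. So there is no in-paper proof to compare your sketch against.

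That said, your sketch has the right cycle structure, and the observation that at most one vertical wall can appear in a pairwise-crossing family is correct and useful. However, a few points are off. A horizontal wall $\wt\Lambda$ is embedded and two-sided in $\wt X$, so $\wt\Lambda\cap\wt X_{\wt v}$ is a single geodesic line (or empty), not a ``bounded family of parallel lines''; you appear to be conflating the lift $\wt\Lambda$ with the immersed wall $\Lambda$ downstairs. More seriously, the Helly step in your $(\ref{infDimensional})\Rightarrow(3)$ argument does not close: even granting that the supporting subtrees share a vertex $\wt v$, the walls need not meet $\wt X_{\wt v}$ in pairwise non-parallel lines, so your $\size{S_v}$ bound does not apply; and walls can also cross via \emph{non-regular} intersections inside edge spaces, which your subtree picture does not account for. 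The mechanism in \cite{Woodhouse14} that replaces this is the construction of what the present paper records as a stable partition (Proposition~\ref{prop:partition}): in the non-dilated case the horizontal walls split into finitely many $G$-invariant classes of pairwise disjoint walls over each vertex space, and that is what bounds the dimension. For $(3)\Rightarrow(\ref{infCube})$ you correctly identify this as the hard direction; what you have written is a heuristic rather than an argument, and the actual construction of the infinite pairwise-crossing family together with the canonical $0$-cube anchoring the infinite cube (cf.\ Proposition~\ref{prop:InfDimInfCube}) is the technical core of \cite{Woodhouse14}.
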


The following result is also obtained from~\cite{Woodhouse14} by combining Thm 1.2, Prop 4.6, and Prop 3.4.
The last part follows from the last paragraph of the proof of Prop 3.4.

\begin{prop} \label{prop:InfDimInfCube}
 Let $X$ be tubular space, and $(\wt{X}, \mathcal{W})$ the wallspace obtained from a finite set of immersed walls in $X$.
 If $C(\wt{X}, \mathcal{W})$ is infinite dimensional, then $\mathcal{W}$ contains an set of pairwise regularly intersecting walls of infinite cardinality, that correspond to the hyperplanes in an infinite cube in $C(\wt{X},\mathcal{W})$.
 Moreover, the infinite cube contains a canonical $0$-cube.
\end{prop}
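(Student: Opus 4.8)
The plan is to assemble the statement from the three results of~\cite{Woodhouse14} cited just above it. Suppose $C(\wt{X},\mathcal{W})$ is infinite dimensional. By Theorem~\ref{mainTheoremFrom15} (which is Thm~1.2 of~\cite{Woodhouse14}) one of the finitely many immersed walls, say $\Lambda$, is dilated, so its dilation function $R\colon\pi_1\Lambda\to\mathbb{Q}^*$ has infinite image, and $C(\wt{X},\mathcal{W})$ contains an infinite cube $c_0\subset c_1\subset\cdots$. In Sageev's construction the hyperplanes of $C(\wt{X},\mathcal{W})$ are canonically identified with the walls of $\mathcal{W}$, and an $n$-cube corresponds to $n$ pairwise crossing hyperplanes; hence the nested cubes $c_n$ determine an infinite set $\mathcal{W}_\infty\subseteq\mathcal{W}$ of pairwise crossing walls, one for each ``direction'' of the infinite cube.

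Next I would recall the explicit construction of such an infinite cube from a dilated wall carried out in the proof of Prop.~3.4 of~\cite{Woodhouse14}: choosing $g\in\pi_1\Lambda$ with $|R(g)|\neq 1$, and replacing $g$ by $g^{-1}$ if necessary so that $|R(g)|>1$, one produces from a lift $\wt{\Lambda}\subset\wt{X}$ an infinite family of distinct \emph{horizontal} walls whose hyperplanes are exactly those of an infinite cube, i.e.\ this family realises $\mathcal{W}_\infty$. Inspecting that construction, each pair of these walls meets inside a vertex space $\wt{X}_{\wt{v}}$ along two lines of distinct slope -- the discrepancy of slopes being precisely what the nontrivial value of $R$ records -- so each such intersection point is regular. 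Equivalently, one invokes Prop.~4.6 of~\cite{Woodhouse14}, which translates crossing of hyperplanes in $C(\wt{X},\mathcal{W})$ into the intersection pattern of the corresponding walls in $\wt{X}$, to see that the pairwise crossings of the hyperplanes of this infinite cube are realised by pairwise regular intersections of the walls; in particular no vertical walls and no non-regular intersection points occur. This yields the infinite pairwise regularly intersecting family in the statement, together with the assertion that it corresponds to the hyperplanes of an infinite cube.

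For the last sentence: a $0$-cube of $C(\wt{X},\mathcal{W})$ is a consistent orientation of all the walls satisfying the descending chain condition, and Sageev's construction comes with a canonical map $\wt{X}\to C(\wt{X},\mathcal{W})$ sending a point $\wt{x}$ to the orientation that selects, for each wall, the halfspace containing $\wt{x}$; the $0$-cubes in the image of this map are the \emph{canonical} $0$-cubes. As noted in the last paragraph of the proof of Prop.~3.4 of~\cite{Woodhouse14}, the infinite cube produced above may be taken to have one of its vertices equal to such a canonical $0$-cube, namely the one associated to a point of $\wt{X}$ lying on the appropriate side of every wall of the family. This completes the plan.

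The step I expect to be the main obstacle is the middle one: checking that the crossings forming the infinite cube are genuinely \emph{regular} intersections inside vertex spaces, rather than parallelisms within a vertex space or intersections hidden in edge spaces. This is exactly where the geometry underlying the dilation function is used, and it is the reason one must pass through the explicit construction of Prop.~3.4 rather than argue abstractly from Theorem~\ref{mainTheoremFrom15} alone; the remaining steps are bookkeeping with Sageev's construction and the cited statements.
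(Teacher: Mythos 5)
Your proposal is correct and follows essentially the same route as the paper, which itself proves this proposition only by citing the combination of Thm~1.2, Prop~4.6, and Prop~3.4 of~\cite{Woodhouse14}, with the canonical $0$-cube claim coming from the last paragraph of the proof of Prop~3.4 — exactly the three ingredients and the final observation you identify.
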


\section{Primitive Immersed Walls} \label{section:primitive}

The following result uses the techniques in Section 5. of~\cite{Woodhouse14} to compute the dilation function.
Let $\Lambda$ be an immersed wall in $X$, and let $R: \pi_1(\Lambda) \rightarrow \mathbb{Q}^*$ be its dilation function.
If $R$ has finite image then $\Lambda$ is non-dilated.
Let $q: \Lambda \rightarrow \Omega$ be the quotient map obtained by crushing each circle to a vertex.
Note that the arcs in $\Omega$ correspond to the arcs in $\Lambda$.
The dilation function $R$ factors through $q_*: \pi_1 \Lambda \rightarrow \pi_1 \Omega$, so there exists a function $\hat{R}: \pi_1 \Omega \rightarrow \mathbb{Q}^*$ such that $R = \hat{R} \circ q_*$.
We can therefore determine if $\Lambda$ is dilated by computing the function $\hat{R}$.

We orient each arc in $\Lambda$ so that all arcs embedded in the same edge space are oriented in the same direction.
We orient the arcs in $\Omega$ accordingly.
We define a weighting $\omega: E(\Omega) \rightarrow \mathbb{Q}^*$.
Let $X_e$ be an edge space in $X$, and let $\sigma$ be an arc mapped into $X_e$ connecting the circles $C^-$ and $C^+$.
Let $\alpha^-: C^- \rightarrow X_{-e}$ and $\alpha^+: C^{+} \rightarrow X_{+e}$ be the corresponding elements in the equitable set.
Then $$\omega(\sigma) = \frac{\#[\varphi_e^-, \alpha^-]}{\#[\varphi_e^+, \alpha^+]}.$$

If $\gamma = \sigma_1^{\epsilon_1} \cdots \sigma_n^{\epsilon_n}$ is an edge path in $\Omega$ where $\sigma_i$ is an oriented arc in $\Omega$, and $\epsilon \in {\pm 1}$, then $\hat{R}(\gamma) = \omega(\sigma_1)^{\epsilon_1} \cdots \omega(\sigma_n)^{\epsilon_n}$.

\begin{lem} \label{lem:primitive}
	Let $X$ be a tubular space, and let $G =\pi_1 X$.
	Let $\Lambda_1, \ldots \Lambda_k$ be a set of immersed walls in $X$ obtained from an equitable set $\{S_v\}_{v\in\Gamma}$.
	Then there exists a set of primitive immersed walls $\Lambda_1',\ldots, \Lambda_{\ell}'$ in $X$ obtained from an equitable set $\{S_v'\}_{v \in \Gamma}$.
	Moreover:
    \begin{enumerate}
     \item If $\Lambda_1, \ldots, \Lambda_k$ are non-dilated, then so are $\Lambda_1',\ldots \Lambda_\ell'$.
     \item If $\Lambda_1, \ldots \Lambda_k$, are fortified, then so are $\Lambda_1',\ldots \Lambda_\ell'$.
     \end{enumerate}
\end{lem}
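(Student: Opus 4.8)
The plan is to modify each curve $\alpha \in S_v$ so that it becomes primitive, while keeping careful track of how this affects intersection numbers with the attaching maps $\varphi_e^\pm$. If $\alpha \in S_v$ represents $g^m$ for a primitive $g \in G_v$ and maximal $m \geq 1$, replace $\alpha$ with $m$ parallel copies of a geodesic representing $g$ (pushed off each other and off the attaching maps so they remain distinct disjoint geodesics). This operation does not change the subgroup generated — $\langle g \rangle = \langle g^m, g\rangle$ — and since $S_v$ already generates a finite index subgroup of $G_v$, so does $S_v'$. Crucially, $\#[\varphi_e^\pm, g^m] = m\cdot\#[\varphi_e^\pm, g]$ because geometric intersection number in the torus is $|\!\det(\cdot,\cdot)|$, which is multiplicative in this way; hence the total intersection count $\#[\varphi_e^\pm, S_{\pm e}]$ is unchanged by the replacement. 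Therefore the equitable equality $\#[\varphi_e^-, S_{-e}'] = \#[\varphi_e^+, S_{+e}']$ still holds, so $\{S_v'\}$ is an equitable set and yields immersed walls $\Lambda_1', \ldots, \Lambda_\ell'$ (the number $\ell$ may differ from $k$ since splitting a curve into parallel copies, and the resulting reorganization of arcs, can change how many connected components the wall graphs break into).

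For part (1), the strategy is to compute the dilation function using the weighting $\omega$ described above. Splitting $\alpha$ into $m$ parallel copies of $g$ replaces each arc incident to the circle $C_\alpha$ with an arc incident to one of the circles $C_g^{(1)}, \ldots, C_g^{(m)}$, and the weight $\omega(\sigma) = \#[\varphi_e^-, \alpha^-]/\#[\varphi_e^+, \alpha^+]$ gets replaced: if the arc now connects a copy representing $g^-$ on one side (appearing as one of $m^-$ parallel copies of a primitive $g^-$) to a copy representing $g^+$ (one of $m^+$ copies), the new weight is $\#[\varphi_e^-, g^-]/\#[\varphi_e^+, g^+] = (m^+/m^-)\cdot\omega(\sigma_{\text{old}})$. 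The key observation is that in any closed edge path $\gamma'$ in $\Omega'$, each circle $C_g^{(j)}$ is entered and exited, so the factors $m^+$ and $m^-$ coming from the "multiplicity corrections" telescope: every circle contributes its multiplicity once as a numerator and once as a denominator. Thus $\hat{R}'$ on closed loops takes values in the group generated by the old weights $\omega(\sigma)$, up to the bounded correction coming from how a loop in $\Omega'$ projects to a loop in $\Omega$ — and since a non-dilated $\Lambda_i$ means the old $\hat R$ has finite image, the new $\hat{R}'$ has finite image too. So $\Lambda_i'$ is non-dilated. (By Theorem~\ref{mainTheoremFrom15} this is equivalent to finite dimensionality of $C(\wt{X},\mathcal{W})$, which is the property we actually care about preserving; one could alternatively argue directly at the level of dual cube complexes, but the dilation computation is cleaner.)

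For part (2), fortification is straightforward to preserve: if $S$ was fortified, then for each edge $e$ there is some $\alpha_e^\pm \in S_{\pm e}$ with $\#[\alpha_e^\pm, \varphi_e^\pm] = 0$. Writing $\alpha_e^\pm = (g_e^\pm)^{m}$ with $g_e^\pm$ primitive, we have $0 = \#[(g_e^\pm)^m, \varphi_e^\pm] = m\cdot\#[g_e^\pm, \varphi_e^\pm]$, so $\#[g_e^\pm, \varphi_e^\pm] = 0$ as well, and $g_e^\pm \in S_{\pm e}'$ witnesses fortification of $S'$. The main obstacle I anticipate is the bookkeeping in part (1): one must be careful that "parallel copies pushed off the attaching maps" can genuinely be arranged as distinct disjoint geodesics (this uses that $X_v$ is nonpositively curved and the relevant homotopy class is primitive, so its geodesic representative is a simple closed geodesic and has an embedded annular neighborhood), and one must verify the telescoping claim for $\hat R'$ precisely — in particular handling arcs whose two endpoints lie on copies of the \emph{same} primitive curve, and confirming that the finite set of residual correction factors really is finite rather than merely bounded in some weaker sense.
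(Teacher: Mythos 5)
Your proposal is correct and follows essentially the same route as the paper: replace each $\alpha = g^m$ by $m$ disjoint parallel copies of the primitive $g$, note that $\#[\varphi_e^\pm, g^m] = m\,\#[\varphi_e^\pm, g]$ preserves the equitable condition, verify non-dilation via the same weighting on the quotient graph $\Omega$ with the multiplicity corrections $m^+/m^-$ telescoping around closed loops (which is exactly the paper's ``edges exiting equals edges entering'' case analysis), and deduce fortification from $m\,\#[g,\varphi_e^\pm]=0$. The only cosmetic difference is that you perform all replacements at once, whereas the paper handles one non-primitive element at a time and iterates.
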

\begin{proof}
	Each $\Lambda_i$ decomposes as the union of disjoint circles, which are the domain of locally geodesic closed paths in the equitable set and arcs.
    Suppose that $\alpha^n \in S_v$, where $[\alpha] \in \pi_1 X_v = G_v$ is primitive.
    Let $\Lambda_i$ be the immersed wall containing the circle $S^1_{\alpha^n}$ corresponding to $\alpha^n$.

    A new equitable set is obtained by replacing $\alpha^n$ in $S_v$ with $n$ locally geodesic curves $\{ \alpha_i : S^1_i \rightarrow X_v\}_{i=1}^n$  with disjoint images in $X_v$ that are isotopic to $\alpha$ in $\wt{X}_v$.
    This remains an equitable set since $\#[\alpha^n, \gamma] = n\#[\alpha, \gamma] = \sum_{i=1}^n \#[\alpha_i, \gamma]$ for any locally geodesic curve $\gamma$ in $X_v$.
    New immersed walls are obtained from $\Lambda_i$ by replacing $S^1_{g^{n}}$ with $S^1_1, \ldots S^1_n$ and reattaching the arcs that were attached to the intersection points in $S^1_{\alpha^n}$ to the corresponding intersection points on $S^1_1, \ldots S^1_n$.
    Let $\Lambda_{i1},\ldots, \Lambda_{i\ell}$ be the new set of immersed walls obtained in this way.
    Note that each arc in $\Lambda_{i1},\ldots, \Lambda_{i\ell}$, corresponds to a unique arc in $\Lambda_i$.

    Assume that $\Lambda_i$ is non-dilated.
    We claim that the new immersed walls $\Lambda_{i1},\ldots, \Lambda_{i\ell}$ are also non-dilated.
    Let $q_i: \Lambda_i \rightarrow \Omega_i$ and $q_{ij}: \Lambda_{ij} \rightarrow \Omega_{ij}$ be the quotient maps obtained by crushing the circles to vertices.
    Let $u$ be the vertex in $\Omega_i$ corresponding to $S^1_{\alpha^n}$.
    Let $R_i : \pi_1 \Lambda_i \rightarrow \mathbb{Q}^*$ and $R_{ij}: \pi_1 \Lambda_{ij} \rightarrow \mathbb{Q}^*$ be the dilation functions.
    Let $\hat{R}_i : \pi_1 \Omega_i \rightarrow \mathbb{Q}^*$ and $\hat{R}_{ij} : \pi_1 \Omega_{ij} \rightarrow \mathbb{Q}^*$ be the unique maps such that $R_i = \hat{R}_i \circ q_i$ and $R_{ij} = \hat{R}_{ij} \circ q_{ij}$.
    Let $\omega_i$ and $\omega_{ij}$ be the respective weightings of the arcs in $\Omega_i$ and $\Omega_{ij}$.
    By assumption, $R_i$ and $\hat{R}_i$ have finite image.
    As the arcs in $\Lambda_{ij}$ correspond to arcs in $\Lambda_{i}$, there is a map $\rho_i : \Omega_{ij} \rightarrow \Omega_i$.
    We show $\Lambda_{ij}$ is non-dilated by showing that $\hat{R}_{ij}(\gamma) = \hat{R}_i(\rho_i \circ \gamma)$.

    Let $\sigma$ be an oriented arc in $\Omega_{ij}$.
    The edge $q_{ij}^{-1}(\sigma)$ embeds in an edge space $X_e$.
    If the vertices of $\sigma^{\epsilon}$ are disjoint from $\rho^{-1}(u)$, then $\omega_{ij}(\sigma^{\epsilon}) = \omega_i(\rho_{ij} \circ \sigma^{\epsilon})$.
    If the endpoints of $\sigma^{\epsilon}$ are contained in $\rho_{ij}^{-1}(v)$, and correspond to the circles $S^1_\iota$ and $S_\tau^1$ then
    $$\omega_{ij}(\sigma^{\pm 1}) = \frac{\#[\varphi^{\mp}_e, \alpha_\iota]}{\#[\varphi^{\pm}_e, \alpha_\tau]} = \frac{n\#[\varphi^{\mp}_e, \alpha_\iota]}{n\#[\varphi^{\pm}_e, \alpha_\tau]} = \frac{\#[\varphi^{\mp}_e, \alpha^n]}{\#[\varphi^{\pm}_e, \alpha^n]} = \omega(\rho_{ij} \circ \sigma^{\pm 1}). $$

    Suppose that exactly one endpoint of $\sigma^{\epsilon}$ is contained in $\rho^{-1}_{ij}(u)$. If $\sigma^{\epsilon}$ terminates a vertex in $\rho_{ij}^{-1}(u)$ corresponding to $S^1_\tau$, and the initial vertex corresponds to a circle that is the domain of a locally geodesic curve $\beta$ then
    $$\omega_{ij}(\sigma^{\pm 1}) = \frac{\#[\varphi^{\mp}_e, \beta]}{\#[\varphi^{\pm}_e, \alpha_\tau]} = \frac{n\#[\varphi^{\mp}_e, \beta]}{n\#[\varphi^{\pm}_e, \alpha_\tau]} = \frac{n\#[\varphi^{\mp}_e, \beta]}{\#[\varphi^{\pm}_e, \alpha^n]} = n \omega(\rho_{ij} \circ \sigma^{\pm 1}). $$
    \noindent If $\sigma^{\epsilon}$ starts at a vertex in $\rho_{ij}^{-1}(u)$ corresponding to $S^1_\iota$, and the terminal vertex correspond to a circle that is the domain of a locally geodesic curve $\beta$ then
    $$\omega_{ij}(\sigma^{\pm 1}) = \frac{\#[\varphi^{\mp}_e, \alpha_\iota]}{\#[\varphi^{\pm}_e, \beta]} = \frac{n\#[\varphi^{\mp}_e,\alpha_\iota ]}{n\#[\varphi^{\pm}_e, \beta]} = \frac{\#[\varphi^{\mp}_e, \alpha^n]}{n\#[\varphi^{\pm}_e, \beta]} = \frac{1}{n} \omega(\rho_{ij} \circ \sigma^{\pm 1}). $$
    \noindent Therefore, given an edge path $\gamma$ in $\Omega_{ij}$, since the number of edges exiting vertices in $\rho^{-1}_{ij}(v)$ is the same as the number of vertices entering, $\hat{R}_{ij}(\gamma) = \hat{R}_i(\rho_i \circ \gamma)$.

    This procedure produces immersed walls with one fewer non-primitive element in the equitable set.	
    Repeating this procedure for each non-primitive element in the equitable set produces a primitive set of non-dilated immersed walls.
    It is also clear, that if $\Lambda_1, \ldots, \Lambda_k$ are fortified, then so are the new immersed walls.
\end{proof}

\section{Finite Dimensional Dual Cube Complexes} \label{section:FiniteDimensional}


Let $X$ be a tubular space and let $G = \pi_1 X$.
Let $(\wt{X}, \mathcal{W})$ be the wallspace obtained from a set of non-dilated immersed walls $\Lambda_1, \ldots \Lambda_k$ constructed from an equitable set, and a vertical immersed wall in each edge space.
{\bf We emphasize that in this section all immersed walls are assumed to be non-dilated, even when it is not explicitly stated. }
Let $\wt{Z} = C(\wt{X}, \mathcal{W})$ and let $Z = G \backslash \wt{Z}$.
By Theorem~\ref{mainTheoremFrom15}, the immersed walls being non-dilated is equivalent to $\wt{Z}$ being finite dimensional.
   For each edge $\wt{e}$ in $\wt{\Gamma}$ let $\wt{\Lambda}_{\wt{e}}$ denote the vertical wall in $\wt{X}_{\wt{e}}$.

We refer to~\cite{WiseHruska13} for full background on the dual cube complex construction.
A $0$-cube $z$ in $\wt{Z}$ is a choice of halfspace $z[\wt{\Lambda}]$ of $\wt{\Lambda}$ for each $\wt{\Lambda} \in \mathcal{W}$ such that
\begin{enumerate}
  \item If $\wt{\Lambda}_1, \wt{\Lambda}_2 \in \mathcal{W}$, then $z[\wt{\Lambda}_1] \cap z[\wt{\Lambda}_2] \neq \emptyset$.
  \item If $x \in X$, then there are only finitely many $\wt{\Lambda} \in \mathcal{W}$ such that $x \notin z[\wt{\Lambda}]$.
\end{enumerate}
\noindent Two $0$-cubes $z_1,z_2$ are adjacent if $z_1[\wt{\Lambda}] = z_2[\wt{\Lambda}]$ for all but precisely one hyperplane $\wt{\Lambda}_{12} \in \mathcal{W}$.
The $1$-cube joining $z_1$ and $z_2$ is dual to the hyperplane corresponding to $\wt{\Lambda}_{12}$.
An $n$-cube is then present wherever the $1$-skeleton of an $n$-cube appears.
We say that two disjoint walls $\wt{\Lambda}, \wt{\Lambda}'$ \emph{face} each other in $z$ if $z[\wt{\Lambda}]$ is not contained in $z[\wt{\Lambda}']$ and vice versa.

\begin{prop} \label{prop:graph of spaces}
There is a $G$-equivariant map $f: \wt{Z} \rightarrow \wt{\Gamma}$.
Therefore $\wt{Z}$ decomposes as a tree of spaces with $\wt{Z}_{\wt{v}} = f^{-1}(\wt{v})$, and $\wt{Z}_{\wt{e}} = f^{-1}(\wt{e})$ is the carrier of the hyperplane corresponding to $\wt{\Lambda}_{\wt{e}} \in \mathcal{W}_{\textsf{v}}$.
\end{prop}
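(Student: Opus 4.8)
The plan is to build the map $f$ directly from the wallspace combinatorics, using the vertical walls $\mathcal{W}_{\textsf{v}}$ to detect which vertex or edge of $\wt\Gamma$ a given $0$-cube "sits over". For each edge $\wt e$ of $\wt\Gamma$ the vertical wall $\wt\Lambda_{\wt e}$ separates $\wt X$ into two halfspaces; removing the open $\epsilon$-neighbourhood of all the $\wt\Lambda_{\wt e}$ from $\wt X$ gives a disjoint collection of regions indexed by the vertices $\wt v$ of $\wt\Gamma$ (the subregion containing $\wt X_{\wt v}$, minus collars), and the closure of the $\epsilon$-collar of $\wt\Lambda_{\wt e}$ is the piece associated to $\wt e$. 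First I would record that a $0$-cube $z$ of $\wt Z$ is, by condition (2), determined "at infinity" along $\wt\Gamma$: for each vertical wall $\wt\Lambda_{\wt e}$ the choice $z[\wt\Lambda_{\wt e}]$ picks one of the two components of $\wt\Gamma \setminus \{\text{midpoint of } \wt e\}$, and the finiteness condition forces these choices to be consistent, i.e. there is a well-defined point (a vertex, or the midpoint of an edge) of $\wt\Gamma$ that lies in $z[\wt\Lambda_{\wt e}]$-side for all but finitely many $\wt e$ and is "between" the remaining ones. This is exactly the statement that a consistent orientation of all but finitely many edges of a tree, together with the descending-chain condition from (1)–(2), determines either a vertex or an edge of the tree.

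Next I would define $f$ on $0$-cubes by sending $z$ to that point of $\wt\Gamma$, and check it extends over $1$-cubes: adjacent $0$-cubes $z_1,z_2$ differ on a single wall $\wt\Lambda_{12}$; if $\wt\Lambda_{12}$ is horizontal the associated vertex/edge of $\wt\Gamma$ is unchanged, so the $1$-cube maps to a point, while if $\wt\Lambda_{12} = \wt\Lambda_{\wt e}$ is vertical then $f(z_1)$ and $f(z_2)$ are the two endpoints of (a subdivision of) $\wt e$, or a vertex and an adjacent edge-midpoint, so the $1$-cube maps to the segment of $\wt\Gamma$ between them. One must then verify the $n$-cube condition: in any $n$-cube at most one of the dual hyperplanes is vertical (two distinct vertical walls $\wt\Lambda_{\wt e},\wt\Lambda_{\wt e'}$ cannot be made to cross — they are disjoint and neither pair of halfspaces nests in a way giving a square together with two more walls; more simply, distinct vertical walls are nested or disjoint and their carriers in $\wt X$ cannot interleave), so $f$ restricted to an $n$-cube collapses all but one direction and is well-defined and continuous. $G$-equivariance is immediate since the construction only uses the $G$-action on $\mathcal W$ and on $\wt\Gamma$, which are compatible. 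Finally, $\wt Z_{\wt e} = f^{-1}(\wt e)$ being the carrier of the hyperplane dual to $\wt\Lambda_{\wt e}$ follows because a $0$-cube maps into the interior of $\wt e$ precisely when its associated point is the midpoint, which happens precisely when it is adjacent across $\wt\Lambda_{\wt e}$, i.e. lies on the carrier; and $f^{-1}(\wt v)$ is then the complementary union of vertex-pieces, giving the tree-of-spaces decomposition over the same tree $\wt\Gamma$.

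The main obstacle I expect is the bookkeeping in the first step: making precise and proving that the halfspace choices $\{z[\wt\Lambda_{\wt e}]\}_{\wt e}$ really do single out a unique point of $\wt\Gamma$, rather than, say, an end. This is where conditions (1) and (2) in the definition of a $0$-cube must be used carefully — condition (2) (the DCC / local finiteness of the orientation) rules out escaping to an end of $\wt\Gamma$, and condition (1) (pairwise intersecting halfspaces) guarantees the chosen orientations on the finitely many "undecided" edges around that point are mutually consistent, i.e. they all point toward a common vertex or flank a common edge. Once that local picture is nailed down, everything else — extension over cubes, the no-two-vertical-hyperplanes-in-a-cube observation, equivariance, and the identification of the edge-spaces with hyperplane carriers — is routine.
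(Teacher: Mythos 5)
Your proposal is correct and follows essentially the same route as the paper: the paper identifies $C(\wt{X},\mathcal{W}_{\textsf{v}})$ with the Bass--Serre tree and defines $f$ by restricting each $0$-cube's halfspace assignment to the vertical walls, then extends over $1$-cubes by the same adjacent-cube case analysis you describe. Your additional remarks (that conditions (1)--(2) pin down a vertex or edge rather than an end, and that no cube is dual to two vertical walls) are just the details implicit in the paper's identification step, so no further comment is needed.
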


\noindent By $f^{-1}(e)$ we mean the union of all cubes $c$ in $\wt{Z}$ such that $f(c) = e$.

\begin{proof}
    As there is a vertical wall in each edge space, and since the vertical walls are all disjoint we can identify $C(\wt{X}, \mathcal{W}_v)$ with the Bass-Serre tree $\wt{\Gamma}$ of $\wt{X}$.
    We define a map $f: \wt{Z} \rightarrow \wt{\Gamma}$: let $z$ be a $0$-cube in $Z$, then define $f(z)$ by letting $f(z)[\wt \Lambda_e] = z[\wt \Lambda_e]$.
    If $z_1, z_2$ are adjacent $0$-cubes, then $z_1[\wt \Lambda] \neq z_2[\wt \Lambda]$ for precisely one wall $\wt{\Lambda} \in \mathcal{W}$.
    If $\wt \Lambda$ is a horizontal wall then $f(z_1) = f(z_2)$ and the $1$-cube joining them is also mapped to the same vertex.
    If $\wt \Lambda = \wt \Lambda_e \in \mathcal{W}_{\textsf{v}}$ then $f(z_1)$ and $f(z_2)$ are adjacent in $\wt{\Gamma}$, so the $1$-cube joining $z_1$ and $z_2$ maps to the edge joining $f(z_1)$ and $f(z_2)$.
    As $f$ is defined on the $1$-skeleton, the map extends uniquely to the entire cube complex $\wt{Z}$.
     Then $\wt{Z}_{\wt v} = f^{-1}(\wt{v})$ and $\wt{Z}_{\wt{e}} =f^{-1}(\wt{e})$ is the carrier of the hyperplane corresponding to $\wt{\Lambda}_e$.
\end{proof}

\noindent Proposition~\ref{prop:graph of spaces} implies that $Z$ decomposes as a graph of spaces with vertex spaces $Z_v$, edge spaces $Z_e$, and underlying graph $\Gamma = G \backslash \wt{\Gamma}$.

The following proposition, which collects the principal consequences of finite dimensionality, is Prop 4.12 in \cite{Woodhouse14}.

\begin{prop} \label{prop:partition}
 Let $X$ be tubular space with geodesic attaching maps, and let $(\wt{X}, \mathcal{W})$ be the wallspace obtained from a finite set of immersed walls in $X$.
  If the dual cube complex $C(\wt{X},\mathcal{W})$ is finite dimensional, then the horizontal walls in $\mathcal{W}$ can be partitioned into a collection $\mathcal{P}$ of subsets such that:
  \begin{enumerate}
   \item \label{part:1} The partition $\mathcal{P}$ is preserved by $G$,
   \item \label{part:2} For each $A\in \mathcal{P}$, the walls in $A$ are pairwise non-intersecting.
   \item \label{part:4} Let $\wt{\Lambda} \in A \in \mathcal{P}$ be a wall intersecting $\wt{X}_{\wt{v}}$. There exists $h \in G_{\wt{v}}$ stabilizing an axis in $\wt{X}_{\wt{v}}$ perpendicular to $\wt{\Lambda} \cap \wt{X}_{\wt{v}}$ such that $A = \{ h^r \wt{\Lambda} \}_{r\in \mathbb{Z}}$.
  \end{enumerate}
\end{prop}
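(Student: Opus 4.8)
The plan is to produce $\mathcal{P}$ by hand, as the $G$-orbit set of ``pencils'' of parallel horizontal walls. By Theorem~\ref{mainTheoremFrom15}, finite dimensionality of $C(\wt{X},\mathcal{W})$ is equivalent to every immersed wall being non-dilated, so throughout I may assume the walls are non-dilated and use the explicit weighting $\omega$ recalled before Lemma~\ref{lem:primitive} to track how a wall behaves across edge spaces. The first step is the local picture in a vertex space: identify $\wt{X}_{\wt{v}}$ with the Euclidean plane $\mathbb{E}^2$, so that $G_{\wt{v}}\cong\mathbb{Z}^2$ acts by translations and each horizontal wall crossing $\wt{X}_{\wt{v}}$ meets it in a disjoint union of straight lines, each of one of the finitely many rational directions carried by the geodesic curves of $S_v$. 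If $L=\wt{\Lambda}\cap\wt{X}_{\wt{v}}$ is such a line, of direction $\vec{w}$, let $h=h_{\wt{v}}(\wt{\Lambda})\in G_{\wt{v}}$ be the primitive element whose translation axis is perpendicular to $\vec{w}$; this is the element demanded in \ref{part:4}. Declare the block of $\wt{\Lambda}$ to be
$$A(\wt{\Lambda}) \ := \ \{\, h_{\wt{v}}(\wt{\Lambda})^{r}\,\wt{\Lambda} \ : \ r\in\mathbb{Z}\,\},$$
whose members cut $\wt{X}_{\wt{v}}$ in the parallel translates $L+rh$ of $L$.

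The next two steps are the substance. First I would show that the walls of $A(\wt{\Lambda})$ are pairwise disjoint, which is condition \ref{part:2}. For $r\neq0$ the walls $\wt{\Lambda}$ and $h^{r}\wt{\Lambda}$ meet $\wt{X}_{\wt{v}}$ in the distinct parallel lines $L$ and $L+rh$, so an intersection point would have to lie elsewhere; to preclude it one follows the two walls simultaneously out of $\wt{X}_{\wt{v}}$ through the edge spaces (which are glued along the locally geodesic attaching curves $\varphi_e^{\pm}$), and checks that in every vertex space they reach they continue to cut out parallel lines at the same spacing, so that they never converge to cross. This is exactly where non-dilation is used: a dilated wall would let the pencil ``spread'', producing two members that cross, and then --- translating such an intersection point by $\langle h\rangle$ --- an infinite family of pairwise regularly intersecting walls, which Proposition~\ref{prop:InfDimInfCube} forbids in the finite dimensional case. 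Second, by the same kind of edge-by-edge propagation I would show that $A(\wt{\Lambda})$ is independent of the vertex space used to define it: dragging the perpendicular direction along $\wt{\Lambda}$ from $\wt{X}_{\wt{v}}$ to any other vertex space $\wt{X}_{\wt{u}}$ that $\wt{\Lambda}$ crosses, non-dilation guarantees that $h_{\wt{v}}(\wt{\Lambda})$ and $h_{\wt{u}}(\wt{\Lambda})$ translate $\wt{\Lambda}$ through the same pencil, so $A(\wt{\Lambda})$ is well defined.

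Granting those two facts, the rest is bookkeeping. The collection $\mathcal{P}:=\{A(\wt{\Lambda}) : \wt{\Lambda}\text{ a horizontal wall}\}$ is a partition: every horizontal wall lies in its own block, and if $A(\wt{\Lambda})$ and $A(\wt{\Lambda}')$ share a wall then, moving $\wt{\Lambda}'$ along its orbit, we may take $\wt{\Lambda}'=\wt{\Lambda}$, and then the two orbits coincide because $h_{\wt{v}}(\cdot)$ is intrinsic. Condition \ref{part:1} holds because $g\,h_{\wt{v}}(\wt{\Lambda})\,g^{-1}=h_{g\wt{v}}(g\wt{\Lambda})$ and hence $g\,A(\wt{\Lambda})=A(g\wt{\Lambda})$, and condition \ref{part:4} is just the definition of $h_{\wt{v}}(\wt{\Lambda})$ read off in whatever vertex space is given. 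The main obstacle is the propagation argument underlying disjointness and well-definedness: translating the planar geometry between different vertex spaces across the edge spaces is not a soft CAT(0)-cubical matter but requires the quantitative analysis of how a wall's lines transform under the attaching maps, i.e., the dilation function and its weighting $\omega$ from~\cite{Woodhouse14}. So the heart of the proof is the implication ``non-dilated $\Rightarrow$ the perpendicular pencil of translates of a horizontal wall is a well-defined, $G$-invariant family of pairwise disjoint walls.''
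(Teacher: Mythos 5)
First, a point of comparison: the paper does not prove this proposition at all --- it is imported verbatim as Prop.\ 4.12 of \cite{Woodhouse14}, so there is no in-paper argument to measure you against. Your sketch does reconstruct the right overall shape (blocks are the $\langle h\rangle$-orbits of a wall under a primitive element of $G_{\wt{v}}$ transverse to $\wt{\Lambda}\cap\wt{X}_{\wt{v}}$, with $G$-equivariance and the partition axioms following formally once the blocks are shown to be well defined and pairwise disjoint). But the two claims you yourself identify as ``the substance'' --- (i) the $h$-translates of $\wt{\Lambda}$ are pairwise disjoint, and (ii) the block is independent of which vertex space is used to define it --- are only gestured at, and you explicitly defer them to ``the quantitative analysis of \cite{Woodhouse14}.'' Since that quantitative analysis \emph{is} the proof of the proposition, the proposal as written does not contain a proof; it contains a correct statement of what needs to be proved.

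Moreover, the one concrete mechanism you offer for (i) does not close the gap. You argue: if some $\wt{\Lambda}$ and $h^{r}\wt{\Lambda}$ cross, then translating by $\langle h\rangle$ yields an infinite family of pairwise regularly intersecting walls, contradicting Proposition~\ref{prop:InfDimInfCube}. Translating only gives you that $h^{ir}\wt{\Lambda}$ crosses $h^{(i+1)r}\wt{\Lambda}$ for every $i$ --- a chain of crossings, not a pairwise-crossing family; nothing so far forces $h^{ir}\wt{\Lambda}$ to cross $h^{jr}\wt{\Lambda}$ for $|i-j|\ge 2$ (note also that the crossing point need not lie in $\wt{X}_{\wt{v}}$, where $h$ acts by translation, so you cannot simply read the crossings off in one Euclidean plane). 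Upgrading a single crossing within a pencil to an infinite cube is exactly what the dilation function and the ``intersector'' analysis of \cite{Woodhouse14} (Prop.\ 4.6 there) accomplish, and that step is absent here. Two smaller unaddressed points: you take for granted that a primitive element of $G_{\wt{v}}\cong\mathbb{Z}^2$ with axis perpendicular to a given rational direction exists (this depends on the choice of flat metric on $\wt{X}_{\wt{v}}$), and that $\wt{\Lambda}\cap\wt{X}_{\wt{v}}$ has a single well-defined direction even when the wall meets the vertex space in several lines. In short, the architecture is right, but the proposal leaves the theorem's actual content to the cited reference, just as the paper does --- which is legitimate as a citation but not as a proof.
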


\noindent Any partition of the horizontal walls in $\mathcal{W}$ satisfying conditions (\ref{part:1})-(\ref{part:4}) in Proposition~\ref{prop:partition} will be called a \emph{stable partition}.

\begin{lem} \label{part:3}
Let $X$ be a tubular space and $(\wt{X}, \mathcal{W})$ be the wallspace obtained from a finite set of immersed walls in $X$.
Let $\mathcal{P}$ be a stable partition of the horizontal walls in $\mathcal{W}$.
Then for each $\wt{v} \in \wt{\Gamma}$, only finitely many $A \in \mathcal{P}$ contain walls intersecting $\wt{X}_{\wt{v}}$.
\end{lem}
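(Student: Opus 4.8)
The plan is to fix a vertex $\wt{v}\in\wt{\Gamma}$, write $G_{\wt{v}}$ for its stabiliser (so $G_{\wt v}\cong\mathbb{Z}^2$ and $G_{\wt v}$ preserves $\wt{X}_{\wt v}$), and let $\mathcal{P}_{\wt v}\subseteq\mathcal{P}$ be the set of blocks containing a wall that meets $\wt{X}_{\wt v}$. Since $\mathcal{P}$ is $G$-invariant by Proposition~\ref{prop:partition}(\ref{part:1}) and $g\wt{X}_{\wt v}=\wt{X}_{\wt v}$ for $g\in G_{\wt v}$, the group $G_{\wt v}$ acts on $\mathcal{P}_{\wt v}$, and I would prove $\mathcal{P}_{\wt v}$ is finite by showing it is a union of finitely many $G_{\wt v}$-orbits, each of which is finite. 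As a first step, if $A\in\mathcal{P}_{\wt v}$ and $\wt{\Lambda}\in A$ meets $\wt{X}_{\wt v}$, then Proposition~\ref{prop:partition}(\ref{part:4}) gives $h\in G_{\wt v}$ with $A=\{h^{r}\wt{\Lambda}\}_{r\in\mathbb{Z}}$; since $h$ fixes $\wt v$ we have $h^{r}\wt{X}_{\wt v}=\wt{X}_{\wt v}$, so every $h^{r}\wt{\Lambda}$ meets $\wt{X}_{\wt v}$. Thus all walls of $A$ meet $\wt{X}_{\wt v}$, all are lifts of the single immersed wall $\Lambda_i$ that $\wt\Lambda$ is a lift of, and all lie in one $G_{\wt v}$-orbit.

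Next I would bound the number of orbits. A horizontal wall meeting $\wt{X}_{\wt v}$ intersects it in a disjoint union of lines, each the image of an elevation of a circle $S^{1}_{\alpha}$ with $\alpha\in S_v$, and there are only $|S_v|$ such circles over $\wt v$ in total. For a fixed $\alpha$, the group $G_{\wt v}$ permutes the images of the elevations of $S^{1}_{\alpha}$ transitively (they form one $G_{\wt v}$-orbit of parallel geodesic lines), and such a line determines the horizontal wall containing it, because it lies inside a unique elevation of the immersed wall containing $S^{1}_{\alpha}$. Hence the horizontal walls meeting $\wt{X}_{\wt v}$ form at most $|S_v|$ orbits under $G_{\wt v}$. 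The assignment sending $A\in\mathcal{P}_{\wt v}$ to the $G_{\wt v}$-orbit of any of its walls is then well defined (all walls of $A$ are $G_{\wt v}$-equivalent), $G_{\wt v}$-equivariant, and descends to an injection of $\mathcal{P}_{\wt v}/G_{\wt v}$ into the set of such wall-orbits: if $g\wt\Lambda=\wt\Lambda'$ with $g\in G_{\wt v}$, $\wt\Lambda\in A$, $\wt\Lambda'\in A'$, then $gA$ is a block of $\mathcal{P}$ containing $\wt\Lambda'$, so $gA=A'$. Therefore $\mathcal{P}_{\wt v}$ has at most $|S_v|$ orbits under $G_{\wt v}$.

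It remains to see that each $G_{\wt v}$-orbit in $\mathcal{P}_{\wt v}$ is finite, which I expect to be the cleanest step. Take $A=\{h^{r}\wt{\Lambda}\}_{r\in\mathbb{Z}}$ as above, let $\ell$ be a component of $\wt{\Lambda}\cap\wt{X}_{\wt v}$, and let $\alpha_{0}\in G_{\wt v}$ generate the stabiliser of $\ell$, so $\alpha_{0}$ translates along $\ell$ while $h$ has an axis perpendicular to $\ell$ by Proposition~\ref{prop:partition}(\ref{part:4}). Since $\alpha_{0}$ fixes $\ell$ it fixes the unique horizontal wall through $\ell$, namely $\wt{\Lambda}$, and because $G_{\wt v}$ is abelian, $\alpha_{0}A=\{h^{r}\alpha_{0}\wt{\Lambda}\}_{r}=\{h^{r}\wt{\Lambda}\}_{r}=A$; likewise $hA=A$, so $\langle h,\alpha_{0}\rangle\subseteq\stab_{G_{\wt v}}(A)$. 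As $h$ and $\alpha_{0}$ act with non-parallel axes they are non-commensurable in $G_{\wt v}\cong\mathbb{Z}^2$, so $\langle h,\alpha_{0}\rangle$, and hence $\stab_{G_{\wt v}}(A)$, has finite index in $G_{\wt v}$; thus $G_{\wt v}\cdot A$ is finite. Combining this with the previous paragraph, $\mathcal{P}_{\wt v}$ is finite, as required.

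The step I anticipate needing the most care is the orbit count: one must work with elevations rather than honest lifts, verify that a trace line in a vertex space determines the horizontal wall through it, and check that the elevations of a (possibly non-simple) closed geodesic $S^{1}_{\alpha}$ really do form a single $G_{\wt v}$-orbit of lines. By contrast, the finiteness of each individual orbit only uses the elementary fact that two non-parallel elements of $\mathbb{Z}^2$ generate a finite-index subgroup.
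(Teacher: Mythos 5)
Your proof follows essentially the same route as the paper's: finitely many $G_{\wt{v}}$-orbits of walls meeting $\wt{X}_{\wt{v}}$, plus finiteness of the $G_{\wt{v}}$-orbit of each block $A=\{h^r\wt{\Lambda}\}_{r\in\mathbb{Z}}$; in fact you supply justifications for both assertions that the paper simply states. One step needs a small repair: you claim that the generator $\alpha_0$ of $\stab_{G_{\wt{v}}}(\ell)$ fixes ``the unique horizontal wall through $\ell$.'' That wall need not be unique --- the paper explicitly allows distinct walls $\wt{\Lambda},\wt{\Lambda}'$ with $\wt{\Lambda}\cap\wt{X}_{\wt{v}}=\wt{\Lambda}'\cap\wt{X}_{\wt{v}}$ (a non-regular intersection), which occurs exactly when the relevant equitable-set element $\alpha\in S_v$ is an imprimitive power $\alpha_0^n$; in that case $\alpha_0$ may permute the finitely many walls through $\ell$ rather than fix $\wt{\Lambda}$. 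The fix is immediate: replace $\alpha_0$ by $\alpha$ itself, whose circle $S^1_\alpha$ is a subcomplex of the immersed wall, so that (a suitable conjugate of) $\alpha$ lies in the image of $\pi_1\Lambda_i$ and genuinely stabilizes $\wt{\Lambda}$; since $\alpha$ and $h$ still have non-parallel axes, $\langle h,\alpha\rangle$ remains of finite index in $G_{\wt{v}}$ and your conclusion stands. The same caveat slightly affects your count of ``at most $|S_v|$'' orbits of walls (a line can lie in finitely many, not necessarily one, walls), but only finiteness is needed there.
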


\begin{proof}
 Suppose that $\wt{\Lambda}$ is a wall intersecting $\wt{X}_{\wt{v}}$, then, by condition (\ref{part:4}) of a stable partition, there exists some $h \in G_{\wt{v}}$ that is perpendicular to $\wt{\Lambda} \cap \wt{X}_{\wt{v}}$ such that $\{ h^r \wt{\Lambda} \}_{r\in \mathbb{Z}} \in \mathcal{P}$.
 By $G$-invariance we can deduce that each of the $G_{\wt{v}}$-translates of $\{ h^r \wt{\Lambda} \}_{r\in \mathbb{Z}}$ is also in $\mathcal{P}$.
 There are only finitely many such translates, therefore each $G_{\wt{v}}$-orbit of a wall in $\wt{X}_{\wt{v}}$ is contained in finitely many elements of $\mathcal{P}$.
 The claim then follows from the fact that there are only finitely many $G_{\wt{v}}$-orbits of walls intersecting $\wt{X}_{\wt{v}}$.
\end{proof}

The immersed walls $\Lambda_1, \ldots, \Lambda_k$ are non-dilated, and therefore $\wt{Z}$ is finite dimensional, so by Proposition~\ref{prop:partition} there exists a stable partition $\mathcal{P}$ of the horizontal walls in $\mathcal{W}$.
	Let $\mathcal{P}_{\wt{v}}$ be the  subpartition containing walls intersecting $\wt{X}_{\wt{v}}$.
	Let $\mathcal{P}_{\wt{e}}$ be the  subpartition of walls intersecting $\wt{X}_{\wt{e}}$.
    By Lemma~\ref{part:3}, both $\mathcal{P}_{\wt{v}}$ and $\mathcal{P}_{\wt{e}}$ are finite subpartitions.
	If $\wt{e}$ is incident to the vertex $\wt{v}$, then $\mathcal{P}_{\wt{e}} \subseteq \mathcal{P}_{\wt{v}}$.
    Let $\mathcal{P}_{\wt{v}} = \{ A_1, \ldots, A_{d_{\wt{v}}} \}$.
    By criterion~(\ref{part:4}) of a stable partition $A_i = \{h^r_i \wt{\Lambda}_i \}_{r\in \integers}$ such that $h_i \in G_{\wt{v}}$ stablizes an axis in $\wt{X}_{\wt{v}}$ perpendicular to $\wt{\Lambda}_i \cap \wt{X}_{\wt{v}}$.
    The action of $G_{\wt{v}}$ preserves both the partition $\mathcal{P}_{\wt{v}}$ and the ordering of the walls in each $A_i$.

    Let $\mathbf{R}$ denote the cubulation of $\mathbb{R}$ with a vertex for each integer and an edge joining consecutive integers.
Therefore, each $0$-cube in $\mathbf{R}^d$ is an element of $\mathbb{Z}^d$.
    We construct a free action of $G_{\wt{v}}$ on $\mathbf{R}^{d_{\wt{v}}}$.
     Let $g\in G_{\wt{v}}$ and let $(\alpha_1, \ldots, \alpha_{d_{\wt{v}}})$ be a $0$-cube in $\mathbf{R}^{d_{\wt{v}}}$.
      Define the map $g \cdot (\alpha_1, \ldots, \alpha_{d_{\wt{v}}}) = (\beta_1, \ldots, \beta_{d_{\wt{v}}})$ such that $g \cdot h_i^{\alpha_i} \wt{\Lambda}_i = h_j^{\beta_j} \wt{\Lambda}_j$.
      As $g$ permutes the walls in $\mathcal{P}_{\wt{v}}$, the map $g$ is a bijection on the $0$-cubes in $\mathbf{R}^{d_{\wt{v}}}$.
     If $g \cdot h_i^{\alpha_i} \wt{\Lambda}_i = h_j^{\beta_j} \wt{\Lambda}_j$, then necessarily $g \cdot h_i^{\alpha_i+1} \wt{\Lambda}_i = h_j^{\beta_j \pm 1} \wt{\Lambda}_j$, so adjacent $0$-cubes are mapped to adjacent $0$-cubes and the map extends to an isomorphism of $\mathbf{R}^{d_{\wt{v}}}$.
     If $g\cdot (\alpha_1, \ldots, \alpha_{d_{\wt{v}}}) = (\alpha_1, \ldots, \alpha_{d_{\wt{v}}})$ then $g$ would stabilize all the walls in $\mathcal{P}_{\wt{v}}$, which would imply that it fixed every $0$-cube in $\wt{Z}_{\wt{v}}$.
     Since $G_{\wt{v}}$ acts freely on $\wt{Z}_{\wt{v}}$ this would imply that $g = 1_G$, and hence $G_{\wt{v}}$ acts freely on $\mathbf{R}^{d_{\wt{v}}}$.

     We also define an embedding $\phi_{\wt{v}}: \wt{Z}_{\wt{v}}\rightarrow \mathbf{R}^{d_{\wt{v}}}$.
    If $z$ is a $0$-cube in $\wt{Z}_{\wt{v}}$ then every wall $\wt{\Lambda}$ that is either vertical or not in contained in the subpartition $\mathcal{P}_{\wt{v}}$ has $\wt{X}_{\wt{v}} \subseteq z[\wt \Lambda ]$.
    Therefore $z$ is entirely determined by $z[\wt \Lambda]$ for $\wt \Lambda$ in $\mathcal{P}_{\wt v}$.
    For $1 \leq i \leq d\wt v$ the set $\{ h^r_i \wt{\Lambda}_i \cap \wt{X}_{\wt{v}}\}_{r\in \mathbb{Z}}$ is an infinite collection of disjoint parallel lines in $\wt{X}_{\wt{v}}$.
    As all the walls in $A_i$ are disjoint in $\wt{X}$, for each $0$-cube $z$ in $\wt{Z}_{\wt{v}}$ there exists a unique $\alpha_i \in \mathbb{Z}$ such that $h_i^{\alpha_i} \wt{\Lambda}_i$ and $h_i^{\alpha_i + 1} \wt{\Lambda}_i$ face each other in $z$.
    Let $\phi_{\wt{v}}(z) = (\alpha_1, \ldots, \alpha_{d_{\wt{v}}})$.
    Note that the map is injective and sends adjacent $0$-cubes to adjacent $0$-cubes, so the map on the $0$-cubes extends to an embedding of the entire cube complex.

    \begin{lem} \label{lem:GvEquivariance}
    The embedding $\phi_{\wt{v}} :\wt{Z}_{\wt{v}} \rightarrow \mathbf{R}^{d_{\wt{v}}}$ is $G_{\wt{v}}$-equivariant.
    \end{lem}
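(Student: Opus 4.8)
The plan is to check the equivariance on $0$-cubes only: both $G_{\wt{v}}$-actions and the map $\phi_{\wt{v}}$ are cellular and determined by their restrictions to $0$-skeleta, so it suffices to prove $\phi_{\wt{v}}(g\cdot z) = g\cdot\phi_{\wt{v}}(z)$ for every $g\in G_{\wt{v}}$ and every $0$-cube $z$ of $\wt{Z}_{\wt{v}}$.

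First I would record the only general fact needed, namely that the ``facing'' relation is $G$-equivariant: for any $g\in G$, any walls $\wt{\Lambda},\wt{\Lambda}'\in\mathcal{W}$, and any $0$-cube $w$, the walls $\wt{\Lambda},\wt{\Lambda}'$ face each other in $w$ if and only if $g\wt{\Lambda},g\wt{\Lambda}'$ face each other in $g\cdot w$. This is immediate from $(g\cdot w)[g\wt{\Lambda}] = g(w[\wt{\Lambda}])$ and the fact that the homeomorphism $g$ of $\wt{X}$ preserves inclusions between subsets.

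Then, fixing $g$ and $z$, I would write $\phi_{\wt{v}}(z) = (\alpha_1,\dots,\alpha_{d_{\wt{v}}})$, so that for each $i$ the walls $h_i^{\alpha_i}\wt{\Lambda}_i$ and $h_i^{\alpha_i+1}\wt{\Lambda}_i$ face each other in $z$. Let $j$ be the index with $g A_i = A_j$ and let $\beta_j$ be the integer with $g\cdot h_i^{\alpha_i}\wt{\Lambda}_i = h_j^{\beta_j}\wt{\Lambda}_j$; by definition of the $G_{\wt{v}}$-action, $\big(g\cdot\phi_{\wt{v}}(z)\big)_j = \beta_j$. Applying the facing equivariance to the pair $h_i^{\alpha_i}\wt{\Lambda}_i,h_i^{\alpha_i+1}\wt{\Lambda}_i$ in $z$, the walls $g\cdot h_i^{\alpha_i}\wt{\Lambda}_i = h_j^{\beta_j}\wt{\Lambda}_j$ and $g\cdot h_i^{\alpha_i+1}\wt{\Lambda}_i$ face each other in $g\cdot z$. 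Since $g$ maps the family $A_i$ bijectively onto $A_j$ and preserves the linear order of these parallel families — which one arranges by choosing the generators $h_i$ coherently on each $G_{\wt{v}}$-orbit in $\mathcal{P}_{\wt{v}}$, possible because $G_{\wt{v}}$ is abelian — we obtain $g\cdot h_i^{\alpha_i+1}\wt{\Lambda}_i = h_j^{\beta_j+1}\wt{\Lambda}_j$. Hence $h_j^{\beta_j}\wt{\Lambda}_j$ and $h_j^{\beta_j+1}\wt{\Lambda}_j$ face each other in $g\cdot z$, and the uniqueness clause in the definition of $\phi_{\wt{v}}$ forces $\phi_{\wt{v}}(g\cdot z)_j = \beta_j$. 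Because $i\mapsto j$ is a bijection of $\{1,\dots,d_{\wt{v}}\}$, this gives agreement in every coordinate, which is the claim.

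The step I expect to require the most care is the bookkeeping of the coordinate permutation $i\mapsto j$ together with the sign in ``$g\cdot h_i^{\alpha_i+1}\wt{\Lambda}_i = h_j^{\beta_j\pm1}\wt{\Lambda}_j$'': one must be sure that $g$ does not reverse the linear order on the parallel family $A_i$, equivalently that the translations $h_i,h_j$ attached to walls lying in a common $G_{\wt{v}}$-orbit are chosen to translate coherently. Once that choice is in place, everything else reduces to the halfspace equivariance recorded in the second paragraph and is routine.
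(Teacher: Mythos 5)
Your proof is correct and follows essentially the same route as the paper: a direct verification on $0$-cubes that the defining facing-pair of $\phi_{\wt{v}}(z)$ is carried by $g$ to the defining facing-pair of $\phi_{\wt{v}}(gz)$, matched against the definition of the $G_{\wt{v}}$-action on $\mathbf{R}^{d_{\wt{v}}}$. In fact you are more careful than the paper's one-line computation, since you explicitly resolve the $h_j^{\beta_j\pm1}$ ambiguity by normalizing the generators $h_i$ coherently along each $G_{\wt{v}}$-orbit (legitimate, since translations of $\wt{X}_{\wt{v}}$ cannot reverse the order of a parallel family), a point the paper leaves implicit.
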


    \begin{proof}
    Let $g \in G_{\wt{v}}$.
    If $\phi_{\wt{v}}(z) = (\alpha_1, \ldots, \alpha_{d_{\wt{v}}})$ and $g\cdot \phi_{\wt{v}}(z) = (\beta_1, \ldots, \beta_{d_{\wt{v}}})$, then $(gz)[h_i^{\alpha_i} \wt{\Lambda}_i] = z[g h_i^{\alpha_i} \wt{\Lambda}_i] = z[h_j^{\beta_j}\wt{\Lambda}_j]$, which implies that $\phi_{\wt{v}}(gz) = (\beta_1, \ldots, \beta_{d_{\wt{v}}}) = g\phi_{\wt{v}}(z)$.
    \end{proof}

  Let $\wt{e}$ be an edge adjacent to $\wt{v}$.
  Then either $+\wt{e} = \wt{v}$ or $-\wt{e} = \wt{v}$.
  We define a free action of $G_{\wt{e}}$ on $\mathbf{R}^{d_{\wt{e}}} \times [-1,1]$.
  After reindexing, let $\mathcal{P}_{\wt{e}} = \{ A_1, \ldots, A_{d_{\wt{e}}} \} \subseteq \mathcal{P}_{\wt{v}}$ where $d_{\wt{e}} \leq d_{\wt{v}}$.
  Let $(\alpha_1, \ldots, \alpha_{d_{\wt{e}}}, \pm 1)$ be a $0$-cube in $\mathbf{R}^d \times [-1,1]$ and let $g \in G_{\wt{e}}$.
  Then $g \cdot (\alpha_1, \ldots, \alpha_{d_{\wt{e}}}, \pm 1) = (\beta_1, \ldots, \beta_{d_{\wt{e}}}, \pm 1)$ such that $g \cdot h_i^{\alpha_i} \wt{\Lambda}_i = h_j^{\beta_j} \wt{\Lambda}_j$.
  As in the case of vertex spaces, this map extends to an isomorphism of $\mathbf{R}^{d_{\wt{e}}} \times [-1,1]$.

 As with the vertex spaces, there is a $G_{\wt{e}}$-equivariant embedding $\phi_{\wt{e}}: \wt{Z}_{\wt{e}} \rightarrow \mathbf{R}^{d_{\wt{e}}} \times [-1,1]$.
 Let $z$ be a $0$-cube in $\wt{Z}_{\wt{e}}$.
 Then for each $1 \leq i \leq d_{\wt{e}}$ there exists a unique $\alpha_i$ such that $h_i^{\alpha_i} \wt{\Lambda}_i$ faces $h_i^{\alpha_i +1} \wt{\Lambda}_i$ in $z$, and $\wt{X}_{\pm\wt{e}} \subseteq z[\wt{\Lambda}_{\wt{e}}]$.
 Define $\phi_{\wt{e}}(z) = (\alpha_1, \ldots, \alpha_{d_{\wt{e}}}, \pm 1)$.

 Let $\wt v = \pm \wt e$.
 The free action of $G_{\wt{v}}$ on $\mathbf{R}^{d_{\wt{v}}}$ restricts to a free action of $G_{\wt{e}}$.
 We claim that we can embed $\mathbf{R}^{d_{\wt{e}}}\times \{\pm 1\}$ into $\mathbf{R}^{d_{\wt{v}}}$ in a $G_{\wt{e}}$-equivariant way.
 Let $H_{\wt{e}} \subseteq \wt{Z}$ be the hyperplane corresponding to $\wt{\Lambda}_e$.
 As $\wt{Z}_{\wt{e}}$ is the carrier of $H_{\wt{e}}$, we can identify $\wt{Z}_{\wt{e}}$ with $H_{\wt{e}} \times [-1,1]$.
 Note that $H_{\wt{e}} \times \{\pm 1\}$ embeds as a subspace in $\wt{Z}_{\wt{v}}$, and $\phi_{\wt{e}}$ restricts to an embedding $\phi_{\wt{e}}^{\pm}: H_{\wt{e}} \times \{\pm 1 \} \rightarrow \mathbf{R}^{d_{\wt{e}}}$, where $\wt{v} = \pm \wt{e}$.

  We construct an embedding $\Psi^{\pm}_{\wt{e}}: \mathbf{R}^{d_{\wt{e}}} \rightarrow \mathbf{R}^{d_{\wt{v}}}$.
  Recall that $\mathcal{P}_{\wt{e}} = \{A_1, \ldots, A_{d_{\wt{e}}} \} \subseteq \mathcal{P}_{\wt{v}} = \{ A_1, \ldots, A_{d_{\wt{v}}} \}$.
     For $d_{\wt{e}} < j \leq d_{\wt{v}}$ if $h_j^r\wt{\Lambda}_j \in A_j$, then $\wt{X}_{\wt{e}} \subseteq z[h_j^r\wt{\Lambda}_j]$ for all $0$-cubes $z$ in $\wt{Z}_{\wt{e}}$.
     Therefore, there is a unique $\alpha_j^{\wt{e}} \in \mathbb{Z}$ such that $h_j^{\alpha^{\wt{e}}_j} \wt{\Lambda}_j$ faces $h_j^{\alpha^{\wt{e}}_j+1} \wt{\Lambda}_j$ for every $0$-cube $z$ in $\wt{Z}_{\wt{e}}$ and $d_{\wt{e}} < j \leq d_{\wt{v}}$.
     Thus we define
     \[
        \Psi^{\pm}_{\wt{e}}(\alpha_1, \ldots, \alpha_{d_{\wt{e}}}) = (\alpha_1, \ldots, \alpha_{d_{\wt{e}}}, \alpha^{\wt{e}}_{d_{\wt{e}} + 1}, \ldots, \alpha^{\wt{e}}_{d_{\wt{v}}}).
     \]
    \noindent The $G_{\wt{e}}$-equivariance of $\Psi^{\pm}_{\wt{e}}$ will require a further assumption:

 \begin{lem} \label{lem:GeEquivariantEdges}
   The following commutative square is $G_{\wt{e}}$-equivariant provided the immersed walls are primitive.
 \begin{displaymath}
  \xymatrix{
    H_{\wt{e}} \times \{\pm 1\} \ar@{^{(}->}[rr]^{\phi_{\wt{e}}^{\pm}}  \ar@{^{(}->}[d] & & \mathbf{R}^{d_{\wt{e}}}  \ar[d]^{\Psi^{\pm}_{\wt{e}}} \\
   \wt{Z}_{\wt{v}}  \ar@{^{(}->}[rr]^{\phi_{\wt{v}}}  & & \mathbf{R}^{d_{\wt{v}}} \\
  }
 \end{displaymath}
 \noindent Moreover, $\Psi^{\pm}_{\wt{e}}$ is a $G_{\wt{e}}$-equivariant inclusion that is equivalent to extending the $G_{\wt{e}}$-action on $\mathbf{R}^{d_{\wt{e}}}$ by a trivial action on $\mathbf{R}^{d_{\wt{v}} - d_{\wt{e}}}$.
 \end{lem}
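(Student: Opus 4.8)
The plan is to verify that the square commutes and that each of its four maps is $G_{\wt{e}}$-equivariant, with the role of primitivity confined to the equivariance of $\Psi^{\pm}_{\wt{e}}$. Throughout I treat the case $\wt{v} = +\wt{e}$; the case $\wt{v} = -\wt{e}$ is identical.

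Commutativity is essentially built into the definitions. If $z$ is a $0$-cube of $\wt{Z}_{\wt{e}}$ lying in the boundary copy $H_{\wt{e}} \times \{+1\}$, then, viewing $z$ also as a $0$-cube of $\wt{Z}_{\wt{v}}$, the coordinates of $\phi_{\wt{v}}(z)$ indexed by $A_1, \ldots, A_{d_{\wt{e}}}$ coincide with those of $\phi_{\wt{e}}^{+}(z)$ (both record, for each $i$, the unique $\alpha_i$ with $h_i^{\alpha_i}\wt{\Lambda}_i$ facing $h_i^{\alpha_i+1}\wt{\Lambda}_i$ in $z$), while for $d_{\wt{e}} < j \le d_{\wt{v}}$ the coordinate indexed by $A_j$ is the integer $\alpha^{\wt{e}}_j$, which was defined precisely so that $h_j^{\alpha^{\wt{e}}_j}\wt{\Lambda}_j$ faces $h_j^{\alpha^{\wt{e}}_j+1}\wt{\Lambda}_j$ in every $0$-cube of $\wt{Z}_{\wt{e}}$. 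Thus $\phi_{\wt{v}}|_{H_{\wt{e}} \times \{+1\}} = \Psi^{+}_{\wt{e}} \circ \phi_{\wt{e}}^{+}$. The equivariance of $\phi_{\wt{v}}$ is Lemma~\ref{lem:GvEquivariance}; $\phi_{\wt{e}}^{+}$ is $G_{\wt{e}}$-equivariant because $\phi_{\wt{e}}$ is and $G_{\wt{e}}$ acts trivially on the $[-1,1]$ factor, so preserves the boundary component $H_{\wt{e}} \times \{+1\}$; and the left-hand inclusion is $G_{\wt{e}}$-equivariant, being induced by the inclusion $\wt{Z}_{\wt{e}} \hookrightarrow \wt{Z}$.

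This reduces the lemma to the equivariance of $\Psi^{\pm}_{\wt{e}}$, and the first step is to describe the $G_{\wt{e}}$-action on $\mathbf{R}^{d_{\wt{v}}}$ obtained by restricting the $G_{\wt{v}}$-action. Since $G_{\wt{e}}$ stabilizes $\wt{X}_{\wt{e}}$ it preserves $\mathcal{P}_{\wt{e}}$, and hence also the complementary set of blocks $\{A_{d_{\wt{e}}+1}, \ldots, A_{d_{\wt{v}}}\}$, so the action respects the splitting of coordinates into $\{1, \ldots, d_{\wt{e}}\}$ and $\{d_{\wt{e}}+1, \ldots, d_{\wt{v}}\}$; on the first $d_{\wt{e}}$ coordinates the preservation of $\mathcal{P}_{\wt{e}}$ keeps the output index in range, so the action is given by exactly the formula $g \cdot h_i^{\alpha_i}\wt{\Lambda}_i = h_{i'}^{\beta_{i'}}\wt{\Lambda}_{i'}$ defining the $G_{\wt{e}}$-action on $\mathbf{R}^{d_{\wt{e}}}$. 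Granting the remaining claim, that $G_{\wt{e}}$ acts trivially on the complementary block, $\Psi^{\pm}_{\wt{e}}$ carries $\mathbf{R}^{d_{\wt{e}}}$ isomorphically onto the $G_{\wt{e}}$-invariant subcomplex $\mathbf{R}^{d_{\wt{e}}} \times \{(\alpha^{\wt{e}}_{d_{\wt{e}}+1}, \ldots, \alpha^{\wt{e}}_{d_{\wt{v}}})\}$, on which $G_{\wt{e}}$ acts through the first block; this is precisely the ``moreover'' assertion, and combined with the previous paragraph it gives the equivariant commuting square.

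The main obstacle is this remaining claim: $g \cdot h_j^{r}\wt{\Lambda}_j = h_j^{r}\wt{\Lambda}_j$ for all $g \in G_{\wt{e}}$, all $d_{\wt{e}} < j \le d_{\wt{v}}$ and all $r$, and it is here that primitivity is needed. Fix such $j$ and $r$, write $\wt{\Lambda} = h_j^{r}\wt{\Lambda}_j$, and choose a line $\ell$ of $\wt{\Lambda} \cap \wt{X}_{\wt{v}}$, which is a lift of the closed geodesic of some $\alpha \in S_v$. Because $A_j \notin \mathcal{P}_{\wt{e}}$, the wall $\wt{\Lambda}$ is disjoint from $\wt{X}_{\wt{e}}$, hence $\ell$ is disjoint from $\wt{X}_{\wt{v}} \cap \wt{X}_{\wt{e}}$, a lift of $\varphi_e^{+}$; since $\ell$ lifts $\alpha$ this forces $\#[\varphi_e^{+}, \alpha] = 0$, so $[\varphi_e^{+}]$ and $[\alpha]$ are proportional in $G_{\wt{v}} \cong \integers^2$, and as $\alpha$ is primitive, $[\varphi_e^{+}] \in \langle [\alpha] \rangle$. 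Since $\varphi_e^{+}$ represents a generator of $G_{\wt{e}}$ inside $G_{\wt{v}}$, we conclude $G_{\wt{e}} \le \langle [\alpha] \rangle = \stab_{G_{\wt{v}}}(\ell)$, the last equality again using that $\alpha$ is primitive. It then remains to upgrade ``$G_{\wt{e}}$ fixes $\ell$ setwise'' to ``$G_{\wt{e}}$ fixes $\wt{\Lambda}$'': primitivity of $\alpha$ makes $\langle [\alpha] \rangle$ act on $\ell$ as the deck group of the covering $\ell \to S^1_\alpha$; the arcs attached to $\ell$ in $\wt{\Lambda}$ sit over the intersection points of $\alpha$ with the attaching curves and so are permuted by this action, and $\wt{\Lambda}$ is recovered from $\ell$ by successively attaching these arcs, so every element of $\langle [\alpha] \rangle$ extends to an automorphism of $\wt{\Lambda}$, giving $G_{\wt{e}} \cdot \wt{\Lambda} = \wt{\Lambda}$. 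This proves the claim. The hypothesis of primitivity is genuinely used here: without it the generator of $G_{\wt{e}}$ can fix $\ell$ setwise while shifting the arc pattern along $\ell$ by a proper fraction of a period, so that the wall is not preserved and $\Psi^{\pm}_{\wt{e}}$ fails to be equivariant.
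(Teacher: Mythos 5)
Your proof is correct and follows essentially the same route as the paper: check commutativity from the definitions, reduce to the claim that $G_{\wt{e}}$ fixes each wall in the blocks $A_j$ with $d_{\wt{e}} < j \le d_{\wt{v}}$, and use primitivity to pass from ``$G_{\wt{e}}$ stabilizes the line $\wt{\Lambda}\cap\wt{X}_{\wt{v}}$'' to ``$G_{\wt{e}}$ stabilizes $\wt{\Lambda}$.'' In fact you supply more justification than the paper does for that last step (the paper simply asserts it), and your identification of $\stab_{G_{\wt{v}}}(\ell)$ with $\langle[\alpha]\rangle\le\stab(\wt{\Lambda})$ via primitivity is exactly the intended mechanism.
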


 \begin{proof}
  Let $z$ be a $0$-cube in $H_{\wt{e}} \times \{ \pm 1 \}$.
  Then by construction
  $$\Psi^{\pm}_{\wt{e}}\circ \phi_{\wt{e}}^{\pm} (z) = (\alpha_1, \ldots , \alpha_{d_{\wt{e}}}, \alpha_{d_{\wt{e}}+1}^{\wt{e}}, \ldots, \alpha_{d_{\wt{v}}}^{\wt{e}}) = \phi_{\wt{v}}(z).$$

  To verify that $\Psi_{\wt{e}}$ is $G_{\wt{e}}$-equivariant, let $g\in G_{\wt{e}}$.
  For $1 \leq i \leq d_{\wt{e}}$ there exists $1 \leq j \leq d_{\wt{e}}$ and $\beta_j$ be such that $g \cdot h_i^{\alpha_i} \wt{\Lambda}_i = h_j^{\beta_j} \wt{\Lambda}_j$.
  For $d_{\wt{e}} < i \leq d_{\wt{v}}$ the intersection $\wt{\Lambda}_i \cap \wt{X}_{\wt{v}}$ is a geodesic line parallel to $\wt{X}_{\wt{e}} \cap \wt{X}_{\wt{v}}$.
  Thus, $G_{\wt{e}}$ stabilizes $\wt{\Lambda}_i \cap \wt{X}_{\wt{v}}$.
  As the immersed walls are primitive we can deduce that $G_{\wt{e}}$ stabilizes $\wt{\Lambda}_i$.
  For $d_{\wt{e}} < i \leq d_{\wt{v}}$ we deduce that $g \cdot h_i^{\alpha} \wt{\Lambda}_i = h_i^{\alpha} \wt{\Lambda}_i$ for $\alpha \in \mathbb{Z}$, and conclude:
   \begin{align*} g \cdot \Psi_{\wt{e}}(\alpha_1, \ldots, \alpha_{\wt{e}}) &= g\cdot(\alpha_1, \ldots , \alpha_{d_{\wt{e}}}, \alpha_{d_{\wt{e}}+1}^{\wt{e}}, \ldots, \alpha_{d_{\wt{v}}}^{\wt{e}}) \\
   &= (\beta_1, \ldots , \beta_{d_{\wt{e}}}, \alpha_{d_{\wt{e}}+1}^{\wt{e}}, \ldots, \alpha_{d_{\wt{v}}}^{\wt{e}}) \\
   &= \Psi_{\wt{e}}(\beta_1, \ldots, \beta_{\wt{e}}) \\
   &= \Psi_{\wt{e}}(g\cdot(\alpha_1, \ldots, \alpha_{\wt{e}})).
   \end{align*}

   \noindent Observe that $G_{\wt{e}}$ acts trivially on the last $d_{\wt{v}} - d_{\wt{e}}$ coordinates.
%
 \end{proof}

 \noindent 
    Let $d = \max \{ |\mathcal{P}_{\wt{v}} | \mid \wt{v} \in V\wt{\Gamma} \}$, which is finite, since there are only finitely many vertex orbits.

\begin{prop} \label{prop:RxT}
 If the immersed walls $\Lambda_1, \ldots, \Lambda_k$ are primitive, then $G$ acts freely on $\mathbf{R}^d \times \wt{\Gamma}$ such that the action on the $\wt{\Gamma}$ factor is the action of $G$ on the Bass-Serre tree.
 Moreover, there is a $G$-equivariant embedding $\phi: \wt{Z} \rightarrow \mathbf{R}^d \times \wt{\Gamma}$.
\end{prop}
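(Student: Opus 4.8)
The plan is to glue the local embeddings $\phi_{\wt v}$ and $\phi_{\wt e}$ into one $G$-equivariant embedding, after first enlarging each $\mathbf{R}^{d_{\wt v}}$ to $\mathbf{R}^{d}$ coherently over $\wt\Gamma$. (If $\wt\Gamma$ is a point then $G\cong\mathbb{Z}^2$ and the statement is the free action on $\mathbf{R}^{d_{\wt v}}\subseteq\mathbf{R}^d$ already produced; so assume otherwise, in which case every vertex of $\wt\Gamma$ has infinite valence, $\wt\Gamma$ is an irreducible CAT(0) cube complex not isometric to $\mathbf{R}$, and hence $\Aut(\mathbf{R}^d\times\wt\Gamma)=\Aut(\mathbf{R}^d)\times\Aut(\wt\Gamma)$.) Thus any $G$-action on $\mathbf{R}^d\times\wt\Gamma$ whose second coordinate is the Bass--Serre action is of product form, given by a homomorphism $\rho\colon G\to\Aut(\mathbf{R}^d)$, and I would build $\rho$ from the graph-of-groups decomposition of $G$: on each vertex group take the trivial extension to $\mathbf{R}^d$ of the free action on $\mathbf{R}^{d_{\wt v}}$; for each edge $e$, Lemma~\ref{lem:GeEquivariantEdges} identifies both restrictions of these actions to the edge group with the $G_{\wt e}$-action on the core coordinates $\mathbf{R}^{d_{\wt e}}$ extended trivially over $\mathbf{R}^{d-d_{\wt e}}$, so the two are conjugate in $\Aut(\mathbf{R}^d)$ by an element fixing the last $d-d_{\wt e}$ coordinates, which we take as the stable letter of $e$. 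The universal property of $\pi_1$ of a graph of groups then yields $\rho$, with $\rho|_{G_{\wt v}}$ conjugate to a free action for every $\wt v$; consequently a nontrivial $g$ either moves every cube of $\wt\Gamma$ or lies in some $G_{\wt v}$ and stabilizes no cube of $\mathbf{R}^d$, so the product action is free.

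For the coherent enlargement, note that each part $A\in\mathcal P$ is supported on a subtree $V_A\subseteq\wt\Gamma$, with $A\in\mathcal P_{\wt v}\iff\wt v\in V_A$ and $A\in\mathcal P_{\wt e}\iff\wt e\subseteq V_A$. By the Helly property for subtrees of a tree, a pairwise-intersecting subfamily of $\{V_A\}$ has a common vertex, so the intersection graph of $\{V_A\}_{A\in\mathcal P}$ has clique number exactly $\max_{\wt v}|\mathcal P_{\wt v}|=d$; being an intersection graph of subtrees of a tree it is chordal, hence perfect, hence properly $d$-colourable. Using that $G$ acts on $\mathcal P$ with finitely many orbits, I would upgrade this to a colouring equivariant for a suitable $G$-action on the $d$ colours. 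Reading colours as coordinates of $\mathbf{R}^d$ gives, for each $\wt v$, an embedding $\iota_{\wt v}$ of $\mathbf{R}^{d_{\wt v}}$ into $\mathbf{R}^d$ as a coordinate-parallel subcomplex; the translation constants on the remaining coordinates are to be chosen, using the constants $\alpha^{\wt e}_j$ appearing in $\Psi^{\pm}_{\wt e}$, so that each $\iota_{\wt v}$ is $\rho$-equivariant and so that for every edge $\wt e$ the composites $\iota_{-\wt e}\circ\Psi^-_{\wt e}$ and $\iota_{+\wt e}\circ\Psi^+_{\wt e}$ agree as maps $\mathbf{R}^{d_{\wt e}}\to\mathbf{R}^d$.

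With this in hand, define $\phi$ on a $0$-cube $z$ of $\wt Z$ by $\phi(z)=\big(\iota_{f(z)}(\phi_{f(z)}(z)),\,f(z)\big)$. Two $0$-cubes of $\wt Z$ differing only across a horizontal wall have equal $\wt\Gamma$-coordinates and adjacent $\mathbf{R}^d$-coordinates; two differing across a vertical wall $\wt\Lambda_{\wt e}$ lie in the carrier of $H_{\wt e}$ (which pins down the $\mathbf{R}^d$-coordinates on the non-core coordinates) and, by the edge-compatibility of the $\iota_{\wt v}$ together with Lemma~\ref{lem:GeEquivariantEdges}, have equal $\mathbf{R}^d$-coordinates and adjacent $\wt\Gamma$-coordinates. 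Hence adjacent $0$-cubes map to adjacent $0$-cubes, so $\phi$ extends to a cubical embedding; $G$-equivariance follows from Lemma~\ref{lem:GvEquivariance}, the $\rho$-equivariance of the $\iota_{\wt v}$, and the $G$-equivariance of $f$.

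The main obstacle is the coherent enlargement of the second paragraph: all of the $\mathbf{R}^{d_{\wt v}}$ must be fit into a single $\mathbf{R}^d$ with identifications matching along every edge and compatible with a product $G$-action. The chordality/perfectness input is what makes the fixed value $d=\max_{\wt v}|\mathcal P_{\wt v}|$ suffice, but the genuinely delicate step is arranging that the translation constants $\alpha^{\wt e}_j$ coming from the two ends of each edge agree on any coordinate shared there, and that the colouring can be chosen $G$-equivariantly; this is where the choice of colouring and of the representatives $\wt\Lambda_j$ must be made carefully, and it is also the only place the primitivity hypothesis enters, through Lemma~\ref{lem:GeEquivariantEdges}, which forces the edge groups to act trivially on the extra coordinates and controls $\Psi^{\pm}_{\wt e}$.
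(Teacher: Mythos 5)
Your proposal follows the same route as the paper: assemble the local embeddings $\phi_{\wt v}$, $\phi_{\wt e}$ and the trivially extended actions on $\mathbf{R}^{d_{\wt v}}$, $\mathbf{R}^{d_{\wt e}}$ into a product action on $\mathbf{R}^d\times\wt\Gamma$ via the tree-of-spaces (equivalently, graph-of-groups) structure, with primitivity entering only through Lemma~\ref{lem:GeEquivariantEdges}; your freeness argument (hyperbolic elements move every vertex of $\wt\Gamma$, elliptic ones lie in a vertex group whose extended action on $\mathbf{R}^d$ is still free) is also the intended one. Where you go beyond the paper is in isolating the actual content of the phrase ``can be equivariantly extended'': one must choose, coherently over all of $\wt\Gamma$ and compatibly with the $G$-action, embeddings of the $\mathbf{R}^{d_{\wt v}}$ as coordinate subcomplexes of a single $\mathbf{R}^d$ that match the maps $\Psi^{\pm}_{\wt e}$ along every edge. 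Your observation that each part of $\mathcal P$ is supported on a subtree of $\wt\Gamma$, so that the relevant intersection graph is chordal with clique number $d$ and hence properly $d$-colourable, is a clean justification that $d=\max_{\wt v}|\mathcal P_{\wt v}|$ coordinates suffice; the paper's one-paragraph proof simply asserts the extension and does not address this. The two sub-steps you flag as delicate --- making the colouring equivariant for an action of $G$ on the colours, and matching the constants $\alpha^{\wt e}_j$ across each edge, which is needed exactly where two distinct parts sharing a colour abut along an edge --- are genuinely the remaining work (the offsets can always be propagated since $\wt\Gamma$ is a tree; equivariance is the nontrivial point), but they are no less present, and no more resolved, in the paper's own argument.
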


\begin{proof}
    The $G_{\wt{v}}$ and $G_{\wt{e}}$-actions on $\mathbf{R}^{d_{\wt{v}}}$ and $\mathbf{R}^{d_{\wt{e}}}$ can be equivariantly extended to actions on $\mathbf{R}^d$ such that $G_{\wt{v}}$ and $G_{\wt{e}}$ act trivially on the additional factors.
    Therefore, the $G_{\wt{e}}$-commutative square in Lemma~\ref{lem:GeEquivariantEdges} can be extended:

    \begin{displaymath}
  \xymatrix{
   H_{\wt{e}} \times \{\pm 1\} \ar@{^{(}->}[rr]^{\phi_{\wt{e}}^{\pm}}  \ar@{^{(}->}[d] & & \mathbf{R}^{d_{\wt{e}}} \ar[d]^{\Psi^{\pm}_{\wt{e}}} \ar[r] & \mathbf{R}^d \ar[d]   \\
   \wt{Z}_{\wt{v}}  \ar@{^{(}->}[rr]^{\phi_{\wt{v}}}  & & \mathbf{R}^{d_{\wt{v}}} \ar[r] & \mathbf{R}^d \\
  }
 \end{displaymath}
    \noindent Therefore, we obtain a $G$-action on $\mathbf{R}^d \times \wt{\Gamma}$ and a $G$-equivariant embedding of the tree of spaces $\wt{Z} \rightarrow \mathbf{R}^d \times \wt{\Gamma}$.
\end{proof}

\begin{prop} \label{prop:fortified}
  $\wt{Z}$ is locally finite if and only if ${\Lambda}_1, \ldots, {\Lambda}_k$ are fortified.
\end{prop}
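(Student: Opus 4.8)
The plan is to analyze local finiteness of $\wt{Z}$ by working in the tree-of-spaces decomposition from Proposition~\ref{prop:graph of spaces}. Since $\wt{Z}$ embeds $G$-equivariantly in $\mathbf{R}^d \times \wt{\Gamma}$ by Proposition~\ref{prop:RxT} (after passing to primitive walls, which by Lemma~\ref{lem:primitive} preserves both non-dilation and the fortified condition), and $\wt{\Gamma}$ is locally finite, the issue of local finiteness of $\wt{Z}$ reduces to the question of whether each vertex space $\wt{Z}_{\wt{v}}$ is locally finite and whether only finitely many edge spaces $\wt{Z}_{\wt{e}}$ meet a given cube. A cube complex is locally finite iff each $0$-cube lies in finitely many cubes, so I will fix a $0$-cube $z$ in some $\wt{Z}_{\wt{v}}$ and count the $1$-cubes (equivalently, the walls $\wt{\Lambda} \in \mathcal{W}$ such that flipping $z[\wt{\Lambda}]$ yields an adjacent $0$-cube). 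Such a wall $\wt{\Lambda}$ must be \emph{innermost} for $z$: it must face some adjacent wall on both sides with no wall strictly between.

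First I would handle the "if" direction. Suppose the walls are fortified. By Lemma~\ref{part:3}, only finitely many $A \in \mathcal{P}$ contain walls meeting $\wt{X}_{\wt{v}}$, and within each such $A = \{h^r\wt{\Lambda}\}_{r \in \mathbb{Z}}$ the walls are linearly ordered and pairwise disjoint in $\wt{X}_{\wt{v}}$ (condition~(\ref{part:4}) of a stable partition), so at most finitely many can be innermost for $z$ \emph{within that family}; this shows $\wt{Z}_{\wt{v}}$ is locally finite (it embeds in $\mathbf{R}^{d_{\wt{v}}}$). The remaining vertical walls $\wt{\Lambda}_{\wt{e}}$ with $\wt{e}$ incident to $\wt{v}$ correspond to edges of $\wt{\Gamma}$ at $\wt{v}$, of which there are finitely many; but one must also check that each such edge space $\wt{Z}_{\wt{e}} \cong H_{\wt{e}} \times [-1,1]$ glued onto $\wt{Z}_{\wt{v}}$ contributes only finitely many cubes at $z$. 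This is where fortification enters: the condition $\#[\alpha^\pm_e, \varphi^\pm_e] = 0$ guarantees that the curve realizing the edge wall $\wt{\Lambda}_{\wt{e}}$ does not cross $\alpha^\pm_e$, so there is a wall in $\mathcal{P}_{\wt{e}}$ running parallel to the edge direction, which bounds the "extent" of the carrier $H_{\wt{e}}$ in the directions transverse to it — precisely the mechanism that keeps $d_{\wt{e}} < j \le d_{\wt{v}}$ coordinates $\alpha^{\wt{e}}_j$ well-defined and finite in the construction of $\Psi^\pm_{\wt{e}}$. I would spell this out: fortification bounds how far into $\wt{Z}_{\wt{v}}$ the edge space protrudes in each extra coordinate, so only finitely many cubes straddle the $\wt{v}$–$\wt{e}$ junction.

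For the "only if" direction I would argue contrapositively: if the walls are not fortified, there is an edge $e$ such that for (say) every $\alpha \in S_{+e}$ one has $\#[\alpha, \varphi^+_e] > 0$, i.e.\ every wall of $\mathcal{P}_{\wt{v}}$ meeting $\wt{X}_{+\wt{e}}$ actually crosses the vertical wall $\wt{\Lambda}_{\wt{e}}$. Then the carrier $\wt{Z}_{\wt{e}} = H_{\wt{e}} \times [-1,1]$ meets infinitely many of the horizontal walls in a single $A \in \mathcal{P}$ — all of them facing across $\wt{\Lambda}_{\wt{e}}$ — and I would produce a $0$-cube $z$ on the hyperplane $H_{\wt{e}}$ that is adjacent to infinitely many distinct $0$-cubes (by flipping, for each $r$, the side of $h^r\wt{\Lambda}$, each of which is innermost because they all cross the vertical wall at the same place), contradicting local finiteness. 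The main obstacle I anticipate is making the "innermost wall" bookkeeping at the junction $\wt{v} = \pm\wt{e}$ precise: one has to keep careful track of which walls in $\mathcal{P}_{\wt{v}} \setminus \mathcal{P}_{\wt{e}}$ can still contribute $1$-cubes at a $0$-cube lying in $\wt{Z}_{\wt{e}}$, and to verify that the fortifying wall really does block all but finitely many of them, using that the walls in each $A_i$ are totally ordered and that a fortifying curve is disjoint from $\varphi^\pm_e$. Once that combinatorial claim is nailed down, both directions follow from Lemma~\ref{part:3} and the embedding $\phi$ of Proposition~\ref{prop:RxT}.
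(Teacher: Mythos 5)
Both directions of your argument rest on a misreading of where the infinity comes from. The Bass--Serre tree $\wt{\Gamma}$ of a tubular group is \emph{not} locally finite: each edge group $\mathbb{Z}$ has infinite index in its $\mathbb{Z}^2$ vertex groups, so every vertex of $\wt{\Gamma}$ has infinitely many incident edges. Your ``if'' direction reduces local finiteness of $\wt{Z}$ to local finiteness of each $\wt{Z}_{\wt{v}}$ plus the claim that there are ``finitely many'' edges of $\wt{\Gamma}$ at $\wt{v}$; that claim is false, and it is exactly the point the fortified hypothesis must address. Each edge $\wt{e}$ at $\wt{v}$ contributes at most one candidate neighbour of a $0$-cube $z$ in $\wt{Z}_{\wt{v}}$, namely the cube $z_{\wt{e}}$ obtained by flipping the vertical wall $\wt{\Lambda}_{\wt{e}}$; the real work is to show that $z_{\wt{e}}$ fails to be a consistent $0$-cube for all but finitely many of the infinitely many edges in each $G_{\wt{v}}$-orbit. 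This is what the fortifying family $\{h^r\wt{\Lambda}\}_{r\in\mathbb{Z}}$ parallel to $\wt{X}_{\wt{e}}\cap\wt{X}_{\wt{v}}$ accomplishes: $z$ selects a facing pair $h^r\wt{\Lambda}$, $h^{r+1}\wt{\Lambda}$, only finitely many translates $g\wt{e}$ have $\wt{X}_{g\wt{e}}$ contained in $z[h^r\wt{\Lambda}]\cap z[h^{r+1}\wt{\Lambda}]$, and for every other translate one of $z_{g\wt{e}}[h^r\wt{\Lambda}]\cap z_{g\wt{e}}[\wt{\Lambda}_{g\wt{e}}]$ or $z_{g\wt{e}}[h^{r+1}\wt{\Lambda}]\cap z_{g\wt{e}}[\wt{\Lambda}_{g\wt{e}}]$ is empty, so $z_{g\wt{e}}$ is not a $0$-cube.

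The ``only if'' direction has a symmetric problem: you propose to exhibit infinitely many neighbours of a single $0$-cube by flipping the side of $h^r\wt{\Lambda}$ for each $r$, but the walls in a single $A\in\mathcal{P}$ are pairwise disjoint and linearly ordered, so at any $0$-cube all but the unique facing pair are oriented consistently towards that pair and cannot be flipped without producing two halfspaces with empty intersection; a parallel family contributes at most two $1$-cubes at $z$, never infinitely many. The correct source of infinitely many neighbours is again the vertical walls: failure of fortification forces $\mathcal{P}_{\wt{e}}=\mathcal{P}_{\wt{v}}$, so each of the infinitely many vertical walls $\wt{\Lambda}_{\wt{e}_i}$ over the $G_{\wt{v}}$-orbit of $\wt{e}$ crosses \emph{every} horizontal wall in $\mathcal{P}_{\wt{v}}$, and flipping each $\wt{\Lambda}_{\wt{e}_i}$ in turn yields infinitely many distinct valid $0$-cubes adjacent to $z$. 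Your innermost-wall bookkeeping at the junction is not the main obstacle; the obstacle is that both halves of your argument count the wrong walls.
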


\begin{proof}
    If $\Lambda_1, \ldots, \Lambda_k$ is not fortified, then there exists a vertex space $\wt{X}_{\wt{v}}$ and an adjacent edge space $\wt{X}_{\wt{e}}$ such that every horizontal wall $\wt \Lambda$ in $\mathcal{P}_{\wt{v}}$ intersects $\wt{X}_{\wt{v}}$ as a line $\wt{\Lambda} \cap \wt{X}_{\wt v}$ that intersects $\wt{X}_{\wt{v}} \cap \wt{X}_{\wt{e}}$.
    Therefore, every horizontal wall intersecting $\wt{X}_{\wt{v}}$ intersects $\wt{X}_{\wt{e}}$, so $\mathcal{P}_{\wt e} = \mathcal{P}_{\wt v}$.
    Let $\wt{e}_1 , \ldots, \wt{e}_i, \ldots$ be an enumeration of the $G_{\wt{v}}$-orbit of $\wt{e}$.
     Then $\mathcal{P}_{\wt{e}_i} = \mathcal{P}_{\wt v}$ and $\wt{\Lambda}_{\wt{e}_i}$ intersects all the horizontal walls in $\mathcal{P}_{\wt{v}}$.

    Let $z$ be a $0$-cube in $\wt{Z}_{\wt v}$.
     There is a $0$-cube $z_i$ such that $z_i[\wt{\Lambda}] = z[\wt{\Lambda}]$ for $\wt{\Lambda} \neq \wt{\Lambda}_{\wt{e}_i}$, and $z_i[\wt{\Lambda}_{\wt{e}_i}] \neq z[\wt{\Lambda}_{\wt{e}_i}]$.
     To verify $z_i$ is a $0$-cube note that every wall in $\mathcal{P}_{\wt{v}}$ intersects $\wt{\Lambda}_{\wt{e}}$, and every other wall $\wt{\Lambda}$ that is not $\wt{\Lambda}_e$ has $\wt{X}_{\wt{e}} \subseteq z_{i}[\wt{\Lambda}]$.
     Therefore $z_i[\wt{\Lambda}_{\wt{e}}] \cap z_i[\wt{\Lambda}] \neq \emptyset$ for all $\wt{\Lambda} \in \mathcal{W} - \{\wt{\Lambda}_{\wt{e}}$\}.
     For any walls $\wt{\Lambda}_1, \wt{\Lambda}_2 \in \mathcal{W} - \{\wt{\Lambda}_{\wt{e}} \}$ the intersection $z_i[\wt{\Lambda}_1] \cap z_i[\wt{\Lambda}_2] = z[\wt{\Lambda}_1] \cap z[\wt{\Lambda}_2] \neq \emptyset$.
     Finally, if $x \in \wt{X}$, then $x \in z_i[\wt{\Lambda}]$ for all but finitely many $\wt{\Lambda} \in \mathcal{W}$, because it is true for $z$, which differs from $z_i$ on precisely one wall.
    Each $z_i$ is adjacent to $z$ since they differ on precisely one wall, so $z_1, \ldots, z_i, \ldots$ is an infinite collection of distinct $0$-cubes adjacent to $z$, and $\wt{Z}$ is not locally finite.

    To show that converse, we first observe that the embedding $\phi_{\wt{v}}: \wt{Z}_{\wt{v}} \rightarrow \mathbf{R}^{d_{\wt{v}}}$ proves that $\wt{Z}_{\wt{v}}$ is always locally finite, irrespective of whether the immersed walls are fortified.
    Let $z$ be a $0$-cube in $\wt{Z}_{\wt{v}}$, and let $\wt{u}$ be adjacent to $\wt{v}$ in $\wt{\Gamma}$ via an edge $\wt{e}$.
    Then $z$ can be adjacent to at most one $0$-cube $z_{\wt{e}}$ in $\wt{Z}_{\wt{u}}$ such that $z[\wt{\Lambda}] = z_{\wt{e}}[\wt{\Lambda}]$ for all $\wt{\Lambda} \in \mathcal{W}$ except $\wt{\Lambda}_e$.
    This $z_{\wt{e}}$ may not always define a $0$-cube however.
    Let $\wt{e}$ be an edge adjacent to $\wt{v}$.
    As the immersed walls are fortified there exists $\{ h^r \wt{\Lambda} \}_{r\in \integers} \in \mathcal{P}_{\wt{v}}$ such that $\{h^r \wt{\Lambda} \cap \wt{X}_{\wt{v}}\}_{r\in \mathbb{Z}}$ is an infinite set of lines parallel to $\wt{X}_{\wt{e}} \cap \wt{X}_{\wt{v}}$.
    As $\{h^r \wt{\Lambda}\}$ is a set of disjoint walls, there exists $r$ such that $h^r\wt{\Lambda}$ and $h^{r+1}\wt{\Lambda}$ are facing in $z$.
    There are only finitely many edges $g_1\wt{e}, \ldots, g_m\wt{e} \in G_{\wt{v}}\wt{e}$ such that $\wt{X}_{g_i\wt{e}} \subseteq z[h^r\wt{\Lambda}] \cap z[h^{r+1}\wt{\Lambda}]$.
    If $g\wt{e}$ is an edge such that $\wt{X}_{g_i\wt{e}}$ is not contained in $z[h^r\wt{\Lambda}] \cap z[h^{r+1}\wt{\Lambda}]$, then either $z_{g\wt{e}}[h^r\wt{\Lambda}] \cap z_{g\wt{e}}[\wt{\Lambda}_e] = \emptyset$ or $z_{g\wt{e}}[h^{r+1}\wt{\Lambda}] \cap z_{g\wt{e}}[\wt{\Lambda}_e] = \emptyset$ so $z_{g\wt{e}}$ is not a $0$-cube.
    As there are only finitely many $G_{\wt{v}}$-orbits of edges incident to $\wt{v}$ we conclude that $\wt{z}_{\wt{e}}$ is a $0$-cube for finitely many edges $\wt{e}$ incident to $\wt{v}$.
    %
   \end{proof}

\begin{prop} \label{prop:virtually special}
If $\Lambda_1, \ldots, \Lambda_k$ are primitive, fortified, non-dilated immersed walls, then $G$ is virtually special.
\end{prop}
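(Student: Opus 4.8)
The strategy is to combine the geometric package assembled in this section with Agol's theorem on special cube complexes. By hypothesis the immersed walls are non-dilated, so $\wt Z = C(\wt X,\mathcal W)$ is finite dimensional, and they are primitive, so Proposition~\ref{prop:RxT} supplies a $G$-equivariant embedding $\phi\colon \wt Z \hookrightarrow \mathbf R^d \times \wt\Gamma$ with $G$ acting on $\wt\Gamma$ as on the Bass--Serre tree. Since the walls are also fortified, Proposition~\ref{prop:fortified} tells us $\wt Z$ is locally finite, and $G$ acts freely on $\wt Z$ by Wise's construction. The first step is thus to record that $G$ acts freely, cocompactly is \emph{not} expected, but freely and with finitely many orbits of cells — cocompactness of the $G$-action on $\wt Z$ follows because $Z = G\backslash \wt Z$ decomposes as a finite graph of spaces with compact vertex and edge spaces (each $Z_{\wt v}$ embeds in $\mathbf R^{d_{\wt v}}/G_{\wt v}$, a compact complex, and similarly for edges). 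So $G$ acts freely and cocompactly on a finite-dimensional CAT(0) cube complex.

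The second and main step is to verify the Haglund--Wise hyperplane pathologies are avoided after passing to a finite-index subgroup. Here I would invoke Agol's theorem: a group acting freely and cocompactly on a CAT(0) cube complex is virtually special provided it is \emph{hyperbolic} — but tubular groups are generally not hyperbolic, so instead the correct tool is the Agol--Groves--Manning / Przytycki--Wise machinery for cube complexes with ``special'' relatively hyperbolic structure, or more directly the observation that $\mathbf R^d \times \wt\Gamma$ already carries an obvious special structure and we want to push it down. Concretely, the cleanest route: $\mathbf R^d$ is a CAT(0) cube complex whose hyperplanes are the coordinate hyperplanes $\{x_i = k+\tfrac12\}$, and these are manifestly embedded, two-sided, non-self-osculating and pairwise non-inter-osculating; likewise $\wt\Gamma$ is a tree, trivially special. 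A product of special complexes is special, so $\mathbf R^d\times\wt\Gamma$ is special — but $G$ does not act on it with special quotient because the $G$-action need not respect the hyperplane structure in a way that yields a special quotient (the hyperplanes could self-osculate in the quotient). So the real content is: show that the convex subcomplex $\phi(\wt Z)$, with its induced $G$-action, has the property that some finite-index subgroup $G' \le G$ acts so that $G'\backslash \wt Z$ is special. I expect this to follow from a direct check that each of the five Haglund--Wise conditions (two-sidedness, no self-intersection, no self-osculation, no inter-osculation of crossing hyperplanes, and no one-sidedness) either holds already in $\wt Z$ or can be arranged by a finite-index separation argument, using that $G$ is virtually $\mathbb Z^2$-by-(free) hence residually finite and in fact has separable relevant subgroups.

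The third step handles the separability input. The hyperplane stabilizers in $\wt Z$ map, under $\phi$, into stabilizers of coordinate hyperplanes of $\mathbf R^d$ or of edge-midpoints of $\wt\Gamma$; in either case such a stabilizer is a subgroup commensurable with a $\mathbb Z$ or $\mathbb Z^2$ in some vertex group $G_{\wt v}$, and these are separable in $G$ because tubular groups are residually finite (indeed they are virtually the fundamental group of a graph of free abelian groups, hence linear over $\mathbb Z$, or one can argue directly from the graph-of-groups structure). Self-osculation and inter-osculation of hyperplanes in $G\backslash\wt Z$ are each witnessed by finitely many ``bad'' group elements or cosets relative to hyperplane stabilizers; separability lets us pass to a finite-index $G' \le G$ avoiding all of them, and $G'\backslash\wt Z$ is then special by Haglund--Wise. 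Finally, since $\wt Z$ is finite-dimensional and locally finite and $G'$ acts freely cocompactly, $G'$ is virtually special, hence $G$ is virtually special.

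The main obstacle is Step 2: making precise exactly which separability statements are needed and checking that the embedding into $\mathbf R^d\times\wt\Gamma$ genuinely controls the hyperplane combinatorics of $\wt Z$ (in particular that crossing and osculation in $\wt Z$ are detected by the coordinate behaviour in $\mathbf R^d$ and the tree $\wt\Gamma$), since $\phi$ is an embedding of cube complexes but need not be an isometric embedding of hyperplanes onto convex subcomplexes. Resolving this requires the structural description of $\wt Z$ as a tree of spaces from Proposition~\ref{prop:graph of spaces} together with the explicit coordinates on $\wt Z_{\wt v}$ given by the stable partition, which together pin down each hyperplane of $\wt Z$ as either a vertical wall (mapping to an edge-midpoint slice $\mathbf R^{d}\times\{\text{midpoint}\}$) or a horizontal wall lying in some $A_i$ (mapping to a coordinate hyperplane $\{x_i = \cdot\}$ inside a vertex block), from which the Haglund--Wise conditions can be read off directly.
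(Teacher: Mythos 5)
Your high-level plan coincides with the paper's: use Proposition~\ref{prop:RxT} to embed $\wt{Z}$ equivariantly in $\mathbf{R}^d \times \wt{\Gamma}$ and then verify the Haglund--Wise hyperplane conditions after passing to a finite-index subgroup. But there is a genuine gap in how you propose to finish. Your Step 3 rests on the assertion that tubular groups are residually finite with separable hyperplane stabilizers (``tubular groups are residually finite \dots hence linear over $\mathbb{Z}$''). This is false for general tubular groups --- there exist non-residually-finite tubular groups --- and invoking it here is essentially circular, since virtual specialness is precisely what would deliver residual finiteness for the groups covered by this proposition. The paper sidesteps separability entirely: the free action on $\mathbf{R}^d \times \wt{\Gamma}$ gives a homomorphism $\rho: G \to \mathbb{Z}^d \rtimes \Aut([-1,1]^d)$, and the required finite-index subgroups are produced explicitly as the kernel of the composition to $\Aut([-1,1]^d)$ (so the factors of $\mathbf{R}^d$ are not permuted, which kills self-intersection and inter-osculation), the preimage of a congruence subgroup $(D\mathbb{Z})^d$ (which makes edge groups primitive in the adjacent vertex groups), and a further cover whose underlying graph has girth at least $2$ (which kills self-osculation of vertical hyperplanes); self-osculation of horizontal hyperplanes is then removed by subdividing $\mathbf{R}^d$. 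This explicit ``congruence subgroup'' construction is the missing idea, and it is what makes the argument non-circular.

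Two further points. Your claim that $G$ acts cocompactly on $\wt{Z}$ because ``$Z_{\wt{v}}$ embeds in $\mathbf{R}^{d_{\wt{v}}}/G_{\wt{v}}$, a compact complex'' is wrong: $G_{\wt{v}} \cong \mathbb{Z}^2$ while $d_{\wt{v}}$ is typically larger than $2$, so that quotient is non-compact, and indeed cocompact cubulation genuinely fails for many tubular groups (Wise classified the cocompactly cubulated ones as a proper subclass). Fortunately cocompactness is never used in the paper's proof, so this error is not fatal to the strategy, but it should be deleted. Finally, the actual verification of the hyperplane conditions is the substance of the proof, and your proposal defers it (``I expect this to follow from a direct check''); as written the proposal identifies the right framework but does not constitute a proof.
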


\begin{proof}
   By Proposition~\ref{prop:RxT}, There is a free action of $G$ on $\mathbf{R}^d \times \Aut([-1,1]^d)$, so $G$ is a subgroup of $\Isom(\mathbf{R}^d \times \wt{\Gamma}) \cong \big(\mathbb{Z}^d \rtimes \Aut([-1,1])^d \big) \times \Aut(\wt \Gamma)$.
    Therefore, there is a projection $\rho: G \rightarrow \mathbb{Z}^d \rtimes \Aut([-1,1]^d)$.
    Each vertex group $G_{\wt{v}}$ embeds in $\mathbb{Z}^d$, and the mapping is invariant under conjugation.
    As there are only finitely orbits of vertices in $\wt{\Gamma}$, there exists a finite index subgroup $(D\mathbb{Z})^d \leqslant \mathbb{Z}^d$ such that if $\wt{e}$ is incident to $\wt{v}$ then $G_{\wt{e}} \cap (D\mathbb{Z})^d$ is generated by a primitive element in $G_{\wt{v}} \cap (D\mathbb{Z})^d$.
    Let $G' = \rho^{-1}\big((D\mathbb{Z})^d \big)$.
    Then $G' \leqslant G$ is a finite index subgroup that embeds in $(D\mathbb{Z})^d \times \Aut(\wt{\Gamma})$ such that each edge group is generated by an element that is primitive in the adjacent vertex groups.
    By Proposition~\ref{prop:RxT} there is a $G$-equivariant embedding $\phi: \wt{Z} \rightarrow \mathbf{R}^d \times \wt{\Gamma}$.
    As $G'$ does not permute the factors of $\mathbf{R}^d$ we can deduce that the hyperplanes in $\big( \mathbf{R}^d \times \wt{\Gamma} \big) / G$ do not self-intersect, so neither do the hyperplanes in $\wt{Z}/ G'$.
    Indeed, they are also $2$-sided and cannot inter-osculate.

    Let $G''$ be a finite index subgroup such that the underlying graph $\Gamma''$ has girth at least $2$.
    Let $\wt{e}$ be an edge such that $+\wt{e} = \wt{v}$.
    As $\Lambda_1, \ldots, \Lambda_k$ are fortified, we conclude that $d_{\wt{v}} > d_{\wt{e}}$ and $\wt{Z}^+_{\wt{e}}$ is a proper subcomplex of $\wt{Z}_{\wt{v}}$.
    As $G_{\wt e}''$ is primitive, if $g \in G_{\wt{v}}'' - G_{\wt e}''$ then $g h_{d_{\wt{v}}}^r \wt{\Lambda}_{d_{\wt{v}}} \neq h_{d_{\wt{v}}}^r \wt{\Lambda}_{d_{\wt{v}}}$ as $g$ acts by translation on $\wt{X}_{\wt{v}}$ in a direction non-parallel to $\wt{\Lambda}_{d_{\wt{v}}} \cap \wt{X}_{\wt{v}}$.
    Thus, $\wt{Z}^{\pm}_{\wt{e}}$ is not stabilizes by $g$, so we can deduce that $\stab_{G''_{\wt{v}}}(\wt{Z}^+_{\wt{e}}) = G_{\wt{e}}''$.
    Therefore, $G''_{\wt{e}} \backslash \wt{Z}^+_{\wt{e}}$ embeds in $G''_{\wt{v}} \backslash \wt{Z}_{\wt{v}}$.

    Let $Z'' = G'' \backslash \wt{Z}$.
    Let $z$ be a $0$-cube in $Z_v''$.
    Let $H_e$ be the vertical hyperplane contained in $Z_e''$, and dual to an edge incident to $v$.
    As the attaching maps of $Z_e''$ are embeddings, and $\Gamma''$ has girth at least $s$, we deduce that $z$ can only be incident to one end of a single $1$-cube intersected by $H_e$.
    Therefore $H_e$ does not self-osculate.

    Let $\sigma$ be a $1$-cube in $\mathbf{R}^d \times \wt{\Gamma}$ that projects to a $1$-cube in $\mathbf{R}^d$.
    The $G''$-orbit of $\sigma$ is a set of $1$-cubes, that all project to the same factor of $\mathbf{R}^d$, since $G''$ does not permute the factors of $\mathbf{R}^d$.
    As $G''$ does not invert hyperplanes, after subdividing $\mathbf{R}^d$ we can assume that the $G''$-orbit of $\sigma$ is a disjoint set of $1$-cubes.
    Therefore, after the corresponding subdivision, we conclude that the horizontal hyperplanes in $Z''$ don't self-osculate.
\end{proof}

We note that the requirement in Proposition~\ref{prop:virtually special} that the immersed walls are fortified is necessary, as the following example demonstrates.

\begin{exmp}
 Let $G = \langle a,b,t \mid [a,b] = 1, tat^{-1} = a \rangle$.
 We can decompose $G$ as the cyclic HNN extension of the vertex group $G_v = \langle a,b \rangle$ with stable letter $t$.
 Thus, $G$ is a tubular group.
 Let $X$ be the corresponding tubular space with a single vertex space $X_v$ and edge space $X_e$.
 There is an equitable set $\{\alpha_1, \alpha_2\}$ where $\alpha_1$ is a  geodesic curve in $X_v$ representing $ab \in G_v$, and $\alpha_2$ is a geodesic curve in $X_v$ representing $ab^{-1} \in G_v$.
 Note that each attaching map $\varphi_e^+$ and $\varphi_e^-$ intersects each curve in the equitable set precisely once.
 Therefore, we obtain a pair of embedded immersed horizontal walls $\Lambda_1$ and $\Lambda_2$, by connecting respective intersection points with $\varphi_e^+$ and $\varphi_e^-$ by an arc.
 A vertical wall $\Lambda_e$ is also embedded in $X_e$.

 In the wallspace $(\wt{X}, \mathcal{W})$ we can decompose $\mathcal{W}$ into three sets of disjoint walls: the walls $\mathcal{W}_1$ that cover $\Lambda_1$, the walls $\mathcal{W}_2$ that cover $\Lambda_2$, and the walls $\mathcal{W}_e$ that cover $\Lambda_e$.
 These walls are disjoint since the immersed walls are embedded.
 Furthermore, the walls in different sets pairwise intersect.
 Therefore we can conclude that $C(\wt{X}, \mathcal{W}) = \mathbf{R}^2 \times \wt{\Gamma}$.
 As this is not locally finite, $G \backslash C(\wt{X},\mathcal{W})$ cannot be virtually special.
\end{exmp}

\section{Revisiting Equitable Sets} \label{section:RevisitingEquitable}

Although Wise proved in \cite{Wise13} that acting freely on a CAT(0) cube complex $\wt{Y}$ implied the existence of an equitable set, and thus a system of immersed walls as in Section~\ref{section:TubularGroups}, no relationship was established between $\wt{Y}$ and the resulting dual $C(\wt{X}, \mathcal{W})$.
Proposition~\ref{prop:inheritFiniteDim} gives the relationship required to reduce Theorem~\ref{mainA} to considering cubulations obtained from equitable sets.

This section will apply the following theorem from \cite{Woodhouse16}.
A \emph{cubical quasiline} is a CAT(0) cube complex quasi-isomorphic to $\mathbb{R}$.

\begin{thm} \label{thm:isometricTori}
Let $G$ be virtually $\mathbb{Z}^n$.
Suppose $G$ acts properly and without inversions on a CAT(0) cube complex $\wt{Y}$.
Then $G$ stabilizes a finite dimensional subcomplex $\wt{Z} \subseteq \wt{Y}$ that is isometrically embedded in the combinatorial metric, and $\wt{Z} \cong \prod_{i=1}^m C_i$, where each $C_i$ is a cubical quasiline and $m \geq n$.
Moreover, $\stab_G(\Lambda)$ is a codimension-1 subgroup for each hyperplane $\Lambda$ in $\wt{Z}$.
\end{thm}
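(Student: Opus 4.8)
The plan is to analyze the action of $G$ on $\wt{Y}$ via the action of a finite-index $\mathbb{Z}^n$ subgroup, exploit the structure of hyperplanes crossed by axes of the free abelian subgroup, and extract a maximal product of quasilines stabilized by $G$. First I would pass to a finite-index subgroup $H \cong \mathbb{Z}^n$; by a standard averaging/induction argument, it suffices to find an $H$-invariant subcomplex of the desired form whose stabilizer in $G$ is all of $G$, so I would work with $H$ and at the end take the union over the (finite) set of $G$-translates, or directly build the subcomplex equivariantly. Since $H$ acts properly on $\wt{Y}$, by Sageev–Wise-type results on abelian group actions on cube complexes (or directly via the flat torus theorem for cube complexes), $H$ fixes setwise a finite-dimensional subcomplex on which it acts cocompactly, which decomposes as a product of the type $\prod_{i=1}^m C_i$ with $C_i$ cubical quasilines. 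The bound $m \ge n$ comes from properness: if $m < n$ then $H$ would act properly on a product of $m$ quasilines, hence $H$ would embed in a group quasi-isometric to $\mathbb{R}^m$, contradicting $H \cong \mathbb{Z}^n$.

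The key technical input is the hyperplane structure. In a product $\prod_{i=1}^m C_i$ of quasilines, every hyperplane is of the form $\Lambda_i \times \prod_{j \neq i} C_j$ for a hyperplane $\Lambda_i$ in some factor $C_i$. Since each $C_i$ is a quasiline, $\Lambda_i$ separates $C_i$ into two unbounded (in fact quasiray) halfspaces, so the hyperplane $\Lambda$ itself, being $(\text{hyperplane in } C_i) \times \prod_{j\neq i} C_j$, separates $\wt{Z}$ into two halfspaces that are both "quasi-$\mathbb{R}^m$" with one coordinate truncated — in particular each is deep. Then $\stab_G(\Lambda)$ is codimension-$1$: the stabilizer contains a finite-index subgroup of $H$ (the subgroup preserving the $i$-th coordinate direction and each halfspace of $\Lambda_i$), which is $\cong \mathbb{Z}^{m} \cap (\text{something of rank} \ge m-1)$, hence has a $\mathbb{Z}$-quotient or at least acts properly and cocompactly on $\Lambda$; combined with the fact that both halfspaces are deep, this gives the codimension-$1$ condition in the sense of Sageev.

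The main obstacle I expect is establishing that the subcomplex $\wt{Z}$ can be chosen $G$-invariant (not merely $H$-invariant) and isometrically embedded in the combinatorial metric simultaneously. For $H$-invariance one has good tools (the min-set / combinatorial flat torus theorem), but promoting to $G$-invariance requires that the product decomposition be canonical — e.g. taking $\wt{Z}$ to be the (essential core of the) union of all combinatorial geodesics realizing translation lengths of elements of $H$, or the subcomplex spanned by all $H$-axes. One must check this canonical object is genuinely $G$-invariant (since $G$ normalizes $H$ up to finite index, it permutes $H$-axes), finite-dimensional (using that $H$ has finite rank, bounding the dimension of any cube all of whose hyperplanes are crossed by a single $H$-axis configuration), and isometrically embedded (the essential core of an $H$-cocompact action is convex, hence isometrically embedded). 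I would handle the combinatorial-isometric-embedding point by invoking convexity of the core together with the fact that convex subcomplexes of CAT(0) cube complexes are isometrically embedded in the $\ell^1$/combinatorial metric. The remaining verification that $m \ge n$ and the codimension-$1$ claim are then comparatively routine given the product structure.
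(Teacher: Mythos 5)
First, a point of orientation: the paper does not prove Theorem~\ref{thm:isometricTori} at all --- it is quoted verbatim from \cite{Woodhouse16} (the ``generalized axis theorem''), and the entire content of that reference is essentially this statement. So there is no in-paper proof to compare against; your proposal has to stand on its own, and it does not.

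The central gap is that your second step assumes the theorem. You write that ``by Sageev--Wise-type results on abelian group actions on cube complexes (or directly via the flat torus theorem for cube complexes), $H$ fixes setwise a finite-dimensional subcomplex on which it acts cocompactly, which decomposes as a product of the type $\prod_{i=1}^m C_i$.'' That existence statement \emph{is} the theorem. The CAT(0) flat torus theorem produces a convex CAT(0) flat $\mathbb{R}^n\subseteq\wt{Y}$, which is not a subcomplex and carries no combinatorial product structure; the cubical flat torus theorem of Wise--Woodhouse and the related Sageev-style arguments require hypotheses (finite dimensionality, local finiteness, or cocompactness of the ambient action) that are deliberately absent here --- $\wt{Y}$ is only assumed to admit a proper $G$-action. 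Removing those hypotheses is exactly what \cite{Woodhouse16} does, by an induction on $n$ whose base case (a properly acting $\mathbb{Z}$ stabilizes an isometrically embedded cubical quasiline in a possibly infinite-dimensional, non-locally-finite complex) is already nontrivial. Your proposal also asserts cocompactness of the $H$-action on $\wt{Z}$, which is not part of the conclusion and is not true in general (the quasilines $C_i$ need not be locally finite), and the $m\ge n$ argument leans on properness forcing a quasi-isometric embedding of $H$ into $\mathbb{R}^m$, which does not follow without cocompactness. The surrounding material --- the hyperplane structure of a product of quasilines, the codimension-$1$ claim, and the promotion from $H$-invariance to $G$-invariance via canonicity of the core --- is reasonable in outline, but it all sits downstream of the step you have not supplied.
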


\noindent Theorem~\ref{thm:isometricTori} allows us to prove the following:

\begin{lem} \label{lem:findingTori}
Let $G$ be a tubular group acting freely on a CAT(0) cube complex $\wt{Y}$.
Let $G_v$ be a vertex group in $G$, then there exists a $G_v$-equivariant subspace $\wt{X}_v \subseteq \wt{Y}$ homeomorphic to $\mathbb{R}^2$.
Moreover, $\wt{X}_{\wt{v}}$ has a metric such that the intersection of a hyperplane in $\wt{Y}$ with $\wt{X}_{v}$ is either empty, or a geodesic line.
\end{lem}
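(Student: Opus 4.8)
The plan is to apply Theorem~\ref{thm:isometricTori} to the restriction of the action to $G_v \cong \integers^2$, which is free and in particular without inversions. This produces a finite dimensional, $G_v$-invariant, convex subcomplex $\wt{Z} \subseteq \wt{Y}$ together with a $G_v$-equivariant product decomposition $\wt{Z} \cong \prod_{i=1}^m C_i$, where each $C_i$ is a cubical quasiline, $m \geq 2$, and $\stab_{G_v}(\Lambda)$ is a codimension-$1$ subgroup of $G_v$ for every hyperplane $\Lambda$ of $\wt{Z}$. Since the only codimension-$1$ subgroups of $\integers^2$ are the infinite cyclic ones, $\stab_{G_v}(\Lambda) \cong \integers$ for every such $\Lambda$. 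I would extract two consequences first. (i) Each factor $C_i$ has only bounded hyperplanes: if $h$ were an unbounded hyperplane of some $C_i$, then, because a cubical quasiline has only finitely many unbounded hyperplanes, the $G_v$-orbit of the hyperplane $h\times\prod_{j\neq i}C_j$ of $\wt{Z}$ would be finite, so $\stab_{G_v}\bigl(h\times\prod_{j\neq i}C_j\bigr)$ would be a finite index subgroup of $\integers^2$, contradicting codimension-$1$. (ii) Writing $\pi_i\colon \wt{Z}=\prod_j C_j \to C_i$ for the coordinate projection, after a harmless modification of $\wt{Z}$ (discarding any factor on which $G_v$ acts with a bounded orbit) I may assume the flat constructed below has $\pi_i$-image unbounded in each $C_i$.

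Next I would pass to the CAT(0) metric on $\wt{Z}$. Since $\wt{Z}$ is finite dimensional the cellular action of $G_v$ is by semisimple isometries, so the Flat Torus Theorem applies: the minimal set of $G_v$ splits $G_v$-equivariantly as $E \times W$ with $G_v$ acting cocompactly on $E \cong \Euclidean^2$ by a lattice of translations and trivially on $W$. Fix $w\in W$ and set $\wt{X}_{\wt{v}} := E\times\{w\} \subseteq \wt{Z} \subseteq \wt{Y}$. This is a $G_v$-invariant subspace homeomorphic to $\reals^2$, on which $G_v$ acts by translations, so it is $G_v$-equivariant; I give it the induced Euclidean metric, whose bi-infinite geodesics are exactly the affine lines.

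It then remains to analyse $\Lambda\cap\wt{X}_{\wt{v}}$ for a hyperplane $\Lambda$ of $\wt{Y}$. As $\wt{Z}$ is convex in $\wt{Y}$, this intersection is empty or $\bar{\Lambda}:=\Lambda\cap\wt{Z}$ is a hyperplane of $\wt{Z}$, say $\bar{\Lambda}=h\times\prod_{j\neq i}C_j$ with $h$ a hyperplane of $C_i$, bounded by~(i). In the latter case $\bar{\Lambda}\cap\wt{X}_{\wt{v}}$ is a nonempty closed convex subset of $\Euclidean^2$ invariant under $\stab_{G_v}(\bar{\Lambda})\cong\integers$, which acts on $\wt{X}_{\wt{v}}$ by a nontrivial translation; hence it is a line, a strip, a half-plane, or all of $\Euclidean^2$. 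If it were all of $\Euclidean^2$, then $\pi_i(\wt{X}_{\wt{v}})\subseteq h$ would be bounded, contradicting~(ii); the strip and half-plane cases are to be excluded by the same circle of ideas, since a positive-area flat piece of $\wt{X}_{\wt{v}}$ lying inside $\bar{\Lambda}$ still projects under $\pi_i$ into the bounded set $h$. This leaves the line case, so $\Lambda\cap\wt{X}_{\wt{v}}$ is a geodesic line, as required.

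The main obstacle is precisely this last point: ruling out that the $G_v$-invariant flat $\wt{X}_{\wt{v}}$ runs alongside, or inside, a hyperplane of $\wt{Z}$, so that hyperplane traces are honest lines rather than two-dimensional regions. This is where the codimension-$1$ hypothesis of Theorem~\ref{thm:isometricTori}, together with the normalisation in~(ii), does the essential work, and it is the step that will require the most care.
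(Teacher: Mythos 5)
Your overall route is the same as the paper's: apply Theorem~\ref{thm:isometricTori} to $G_v\cong\integers^2$ to obtain a finite dimensional invariant product of quasilines with codimension-$1$ hyperplane stabilizers, apply the Flat Torus Theorem inside it to obtain a $G_v$-invariant flat $\wt{X}_{\wt{v}}$, and then use the codimension-$1$ property to force hyperplane traces to be geodesic lines. The construction of the flat and the reduction to excluding $2$-dimensional traces are fine.

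The gap is exactly where you flag it, and the mechanism you propose does not close it. First, your claim (i) rests on the assertion that a cubical quasiline has only finitely many unbounded hyperplanes; the lemma makes no local finiteness assumption on $\wt{Y}$, and without one this is false --- e.g.\ $\reals\times T$ with $T$ an infinite star of diameter $2$ is a quasiline with infinitely many unbounded hyperplanes, in which case the $G_v$-orbit of such a hyperplane need not be finite and no contradiction with codimension-$1$ arises. Second, and more seriously, boundedness of the $\pi_i$-image only produces a contradiction in the full-plane case. If $\bar{\Lambda}\cap\wt{X}_{\wt{v}}$ is a strip or a half-plane, its unbounded direction is the translation direction of $\stab_{G_v}(\bar{\Lambda})$, which preserves $\bar{\Lambda}$ and therefore has bounded $\pi_i$-image in any event; a strip of positive width can project into the bounded set $h$ with no contradiction, so ``the same circle of ideas'' does not exclude these cases. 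What is actually needed is an argument that the flat cannot contain a $2$-dimensional piece of a single hyperplane (i.e.\ of a union of midcubes); the paper disposes of this in one sentence by direct appeal to the codimension-$1$ conclusion of Theorem~\ref{thm:isometricTori}, that is, to the structure established in \cite{Woodhouse16}, and that ingredient is what your write-up still lacks.
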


\begin{proof}
 By Theorem~\ref{thm:isometricTori}, there exists a $G_{v}$-equivariant subcomplex $\wt{Y}_v \subseteq \wt{Y}$  that isometrically embeds in the combinatorial metric, and such that $\wt{Y}_v \cong \prod_{i=1}^m C_i$ where each $C_i$ is a cubical quasiline.
 By the flat torus theorem~\cite{BridsonHaefliger}, $G_v$ stabilizes a flat $\wt{X}_v \subseteq \wt{Y}_v$, that is a convex subset in the CAT(0) metric of $\wt{Y}_v$.
  As the stabilizers of hyperplanes in $\wt{Y}_v$ are codimension-1 subgroups of $G_v$, the intersection of a hyperplane in $\wt{Y}$ with $\wt{X}_v$ is either empty or a geodesic line in the CAT(0) metric inherited from $\wt{Y}_v$.
\end{proof}

If $S$ is a subset of a CAT(0) cube complex $\wt{Y}$, then let $\hull(S)$ denote the \emph{combinatorial convex hull} of $S$.
The combinatorial convex hull of $S$ is the minimal convex subcomplex containing $S$.
Equivalently, $\hull(S)$ is the intersection of all closed halfspaces containing $S$.

\begin{defn}
 Let $X$ be a tubular space and let $Y$ be a nonpositively curved cube complex.
A map $f: X \rightarrow Y$ is an \emph{amicable immersion} if:
 \begin{enumerate}
 \item $f_* : \pi_1 X \rightarrow \pi_1 Y$ is an isomorphism.
 \item The $G$-equivariant map $\wt f : \wt{X} \rightarrow \wt{Y}$ embeds each vertex space $\wt{X}_{\wt{v}}$ in $\wt{Y}$.
 \item Each $\wt{X}_{\wt{v}}$ has a Euclidean metric such that if $H \subseteq \wt Y$ is a hyperplane, then the intersection $H \cap \wt{X}_{\wt v}$ is either the empty set, or a single geodesic line in $\wt{X}_{\wt v}$.
 \item Each edge space $\wt{X}_{\wt{e}}$ is emdedded transverse to the hyperplanes.
  \item Each $\wt{X}_{\wt{e}}$ is contained in $\hull\big( \bigcup_{\wt{v}\in V\wt{\Gamma}} \wt{X}_{\wt{v}}\big)$.
 \end{enumerate}
 \noindent Note that the Euclidean metric on each $\wt X_{\wt{v}}$ is not the subspace metric induced from $\wt{Y}$.
\end{defn}

\begin{lem}
 Let $X$ be a tubular space and let $Y$ be a nonpositively curved cube complex.
 Let $F : \pi_1 X \rightarrow \pi_1 Y$ be an isomorphism.
 Then there is an amicable immersion $f: X \rightarrow Y$ such that $f_* = F$.
\end{lem}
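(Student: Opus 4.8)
The plan is to assemble $f$ piece by piece, taking the vertex spaces straight from Lemma~\ref{lem:findingTori} and filling in the edge spaces with equivariant strips inside suitable combinatorial convex hulls. Identify $G=\pi_1X$ with $\pi_1Y$ along $F$, so that $G$ is a tubular group acting freely on $\wt Y$. For a representative of each $G$-orbit of vertices of the Bass--Serre tree, Lemma~\ref{lem:findingTori} supplies a $G_{\wt v}$-equivariant subspace $\wt X_{\wt v}\subseteq\wt Y$ homeomorphic to $\mathbb R^2$, carrying a Euclidean metric in which every hyperplane of $\wt Y$ meets $\wt X_{\wt v}$ either trivially or in a single geodesic line. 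By the flat torus theorem $G_{\wt v}\cong\mathbb Z^2$ acts cocompactly on this flat, so $G_{\wt v}\backslash\wt X_{\wt v}$ is a torus; I let $f$ restrict on the corresponding vertex space of $X$ to a map lifting a $G_{\wt v}$-equivariant homeomorphism from the universal cover of that torus onto $\wt X_{\wt v}$ realizing $F$ on $G_{\wt v}$, and I pull the Euclidean metric back. Propagating by $g\cdot\wt X_{\wt v}:=\wt X_{g\wt v}$ over the whole tree gives the vertex-space data and realizes conditions~(2) and~(3).

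Next fix an edge $\wt e$ with endpoints $\pm\wt e$. The edge group $G_{\wt e}\cong\mathbb Z$ lies in each of $G_{-\wt e}$ and $G_{+\wt e}$, acting on the relevant flat by translations, so each flat contains a $G_{\wt e}$-invariant straight line; choose such lines $\ell^{\pm}_{\wt e}\subseteq\wt X_{\pm\wt e}$, compatibly over $G$-orbits of edges, decree them to be the images of the attaching maps $\wt\varphi_e^{\pm}$, and arrange $\ell^-_{\wt e}\neq\ell^+_{\wt e}$ (replacing one line by a parallel axis in the event that the two flats coincide). Since $G_{\wt e}$ preserves each line, the two lines are $G_{\wt e}$-cocompact and lie at finite Hausdorff distance in $\wt Y$; hence only finitely many hyperplanes of $\wt Y$ separate them, and the combinatorial convex hull $H_{\wt e}:=\hull(\ell^-_{\wt e}\cup\ell^+_{\wt e})$ is a finite-dimensional, $G_{\wt e}$-cocompact convex subcomplex with $H_{\wt e}\subseteq\hull\big(\bigcup_{\wt v}\wt X_{\wt v}\big)$. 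Moreover $H_{\wt e}$ is at least two-dimensional, since a $\mathbb Z$-cocompact tree containing two $\mathbb Z$-invariant lines would force both to coincide with the axis of the (necessarily hyperbolic) generator, contradicting $\ell^-_{\wt e}\neq\ell^+_{\wt e}$.

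Inside $H_{\wt e}$ I would then construct a $G_{\wt e}$-equivariant embedded strip $\wt X_{\wt e}\cong\mathbb R\times[-1,1]$ with $\mathbb R\times\{\pm 1\}=\ell^{\pm}_{\wt e}$ that is transverse to the hyperplanes, and let $f$ restrict to the induced map $X_e=S^1\times[-1,1]\to Y$. The cleanest route is via disk diagrams: $H_{\wt e}/G_{\wt e}$ is a compact nonpositively curved cube complex homotopy equivalent to $S^1$, containing the embedded essential loops $\ell^{\pm}_{\wt e}/G_{\wt e}$; I take a least-area annular diagram between these two loops, use standard disk-diagram arguments in nonpositively curved cube complexes to conclude that it is reduced, $\pi_1$-injective, embedded, and transverse to the hyperplanes, and lift it to $H_{\wt e}$ to get $\wt X_{\wt e}$ (a more hands-on alternative assembles the strip by pushing $\ell^-_{\wt e}$ across the finitely many separating hyperplanes one carrier at a time). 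Since $\wt X_{\wt e}\subseteq H_{\wt e}\subseteq\hull\big(\bigcup_{\wt v}\wt X_{\wt v}\big)$ and is transverse, conditions~(4) and~(5) hold; and since $f$ realizes $F$ on each vertex group and sends the two attaching circles of each edge space to loops representing the $F$-image of the edge generator, van Kampen's theorem gives $f_*=F$, so $f$ is an amicable immersion.

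The main obstacle is the edge-strip construction: producing a strip that is simultaneously embedded, transverse to the hyperplanes, and $G_{\wt e}$-equivariant. Controlling the least-area annular diagram -- ruling out self-overlaps and tangencies while respecting the boundary behaviour forced by the chosen lines $\ell^{\pm}_{\wt e}$ -- is the delicate part, and one must also reconcile genuine $G_{\wt e}$-equivariance with the fact that $G_{\wt e}$ may only permute, rather than fix, the finitely many hyperplanes separating the two lines. A secondary technical point, needed because $\wt Y$ is not assumed finite dimensional, is the verification that $H_{\wt e}$ is finite dimensional and $G_{\wt e}$-cocompact; this follows from the finite-Hausdorff-distance bound between $\ell^-_{\wt e}$ and $\ell^+_{\wt e}$, which forces finitely many separating hyperplanes and finitely many $G_{\wt e}$-orbits of hyperplanes crossing $H_{\wt e}$.
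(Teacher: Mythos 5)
Your proof follows the same route as the paper: vertex flats come from Lemma~\ref{lem:findingTori}, propagated equivariantly over the tree, and the edge spaces are then inserted transversally inside $\hull\big(\bigcup_{\wt{v}}\wt{X}_{\wt{v}}\big)$; the paper simply asserts this last insertion, whereas you supply a (plausible) annular-diagram construction for it. The only small point you should add is that the attaching lines $\ell^{\pm}_{\wt{e}}$ must also be chosen so as not to coincide with the trace of any hyperplane on the flat (the paper states this explicitly), which is achieved by replacing a line with a nearby parallel $G_{\wt{e}}$-invariant one.
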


\begin{proof}
 Use $F$ to identify $G = \pi_1 X$ with $\pi_1 Y$.
 The claim is proven by constructing a $G$-equivariant map between the tree of spaces $\wt{X} \rightarrow \wt{Y}$.
 By Lemma~\ref{lem:findingTori} for each $\wt v \in V\wt{\Gamma}$, we can $G_{\wt{v}}$-equivariantly embed a Euclidean flat $\wt{X}_{\wt{v}}$ in $\wt{Y}$ such that if $H \subseteq \wt{Y}$ is a hyperplane, then the intersection $H \cap \wt{X}_{\wt v}$ is either the empty set, or a single geodesic line in $\wt{X}_{\wt v}$.
 Moreover, we can ensure that $\bigcup_{\wt{v} \in V\wt \Gamma} \wt{X}_{\wt{v}}$ is $G$-equivariant.
  The edges spaces $\wt{X}_{\wt{e}}$ can then be inserted transverse to the hyperplanes in $\wt{Y}$ so that the intersections $\wt{X}_{\wt{e}} \cap \wt{X}_{\wt{v}}$ with adjacent vertex spaces is not contained in a hyperplane in $\wt{Y}$, and $\wt{X}_{\wt{e}}$ is contained inside $\hull\big( \bigcup_{\wt{v}\in V\wt{\Gamma}} \wt{X}_{\wt{v}}\big)$.
\end{proof}


\begin{lem} \label{lem:flatHull}
Let $X \rightarrow Y$ be an amicable immersion, where $Y$ is finite dimensional.
If $\wt{v}$ is a vertex in $\wt{\Gamma}$, then $\hull(\wt{X}_{\wt{v}})$ embeds as a subcomplex of $\mathbf{R}^d$ for some $d$.
\end{lem}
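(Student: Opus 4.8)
The plan is to use the product structure furnished by Theorem~\ref{thm:isometricTori}, cut the hull down to a subcomplex of a product of hulls inside the individual quasiline factors, and then embed each of those factor-hulls into a grid $\mathbf{R}^{N_i}$.

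Applying Theorem~\ref{thm:isometricTori} to the action of $G_{\wt v}\cong\mathbb Z^2$ on $\wt Y$ gives a $G_{\wt v}$-invariant convex subcomplex $\wt Y_{\wt v}\cong\prod_{i=1}^m C_i$ with each $C_i$ a cubical quasiline; as in the construction behind Lemma~\ref{lem:findingTori} (via the flat torus theorem) we may assume $\wt X_{\wt v}\subseteq\wt Y_{\wt v}$ with $G_{\wt v}$ acting cocompactly on $\wt X_{\wt v}$, and since $\hull(\wt X_{\wt v})$ is computed inside the convex subcomplex $\wt Y_{\wt v}$ we may replace $\wt Y$ by $\wt Y_{\wt v}$. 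Let $\pi_i\colon\prod_j C_j\to C_i$ be the factor projection; it sends convex subsets to convex subsets, so $\pi_i(\wt X_{\wt v})$ is a convex subset of $C_i$. Put $K_i:=\hull_{C_i}(\pi_i(\wt X_{\wt v}))$, a convex subcomplex of $C_i$. Then $\prod_i K_i$ is a convex subcomplex of $\wt Y_{\wt v}$ containing $\wt X_{\wt v}$, so $\hull(\wt X_{\wt v})\subseteq\prod_i K_i$; hence it suffices to realise each $K_i$ as a subcomplex of some $\mathbf{R}^{N_i}$, and then $\hull(\wt X_{\wt v})\subseteq\prod_i K_i\hookrightarrow\prod_i\mathbf{R}^{N_i}=\mathbf{R}^d$ with $d=\sum_i N_i$.

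It remains to embed each $K_i$ in a grid. After replacing $G_{\wt v}$ by a finite index subgroup preserving every factor (equivariance is not needed for the conclusion), $\pi_i(\wt X_{\wt v})$ is either a single point or a geodesic line on which a virtually cyclic group acts cocompactly — it cannot be two-dimensional, since a quasiline contains no Euclidean plane. In the first case $K_i$ is a single cube, hence embeds in $\mathbf{R}^{N_i}$ for $N_i$ its dimension. In the second, the combinatorial hull of the cocompact convex set $\pi_i(\wt X_{\wt v})$ stays within a bounded neighbourhood of it, so $K_i$ carries a free cocompact action of a virtually cyclic group and in particular is a locally finite, finite-dimensional CAT(0) cube complex quasi-isometric to $\mathbb R$. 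For such $K_i$ I would show the hyperplane crossing graph has finite chromatic number: fixing a combinatorial geodesic line of $K_i$ and splitting hyperplanes according to whether their projection onto it is bounded, the hyperplanes with bounded projection have a crossing graph contained in an interval graph of finite clique number (by local finiteness and cocompactness), hence of finite chromatic number, while those with unbounded projection are finite in number. A proper colouring thus partitions the hyperplanes of $K_i$ into finitely many families of pairwise non-crossing hyperplanes; dualising each family to a tree $T$ gives a subcomplex embedding of $K_i$ into the product of these trees, each $T$ being a bounded-valence tree with at most two ends. Since a tree has no cycles, any labelling of its edges by coordinate directions that is injective at each vertex exhibits it as a subcomplex of a grid, so $T\hookrightarrow\mathbf{R}^{N_T}$; multiplying these out gives $K_i\hookrightarrow\mathbf{R}^{N_i}$, which finishes the argument.

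The main obstacle is this last step — understanding the hyperplanes of the quasiline $K_i$. The product reduction, the inclusion $\hull(\wt X_{\wt v})\subseteq\prod_i K_i$, and the tree-to-grid passage are all routine; but verifying that $K_i$ inherits cocompactness (hence local finiteness) from $\pi_i(\wt X_{\wt v})$, and then bounding the chromatic number of the crossing graph of $K_i$ — equivalently, organising its walls into finitely many "parallel" families — is where the two-endedness, local finiteness and cocompactness genuinely enter, and it is the point that demands the most care.
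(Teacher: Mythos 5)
Your strategy diverges completely from the paper's, and as written it has a fatal step plus a hypothesis mismatch. The fatal step is the tree-to-grid passage: the assertion that any labelling of the edges of a tree by coordinate directions, injective at each vertex, exhibits the tree as a subcomplex of a grid is false. Such a labelling only defines a cubical immersion, not an embedding; distinct vertices of the tree can land on the same vertex of the grid (follow edges labelled $+e_1,+e_2,-e_1,-e_2$), and in fact no bounded-valence tree of exponential growth embeds as a subcomplex of any $\mathbf{R}^N$ at all, since a subcomplex embedding is injective and $1$-Lipschitz on vertices while balls in $\mathbf{R}^N$ grow polynomially. For the two-ended trees you have in mind an embedding does exist, but only via a genuine argument using that they are uniformly locally finite lines with bounded decorations --- which exposes the second problem: your chromatic-number argument (finite clique number of the interval graph, finiteness of the unbounded-projection hyperplanes, bounded valence of the dual trees, and cocompactness of the cyclic action on $K_i=\hull_{C_i}(\pi_i(\wt X_{\wt v}))$) repeatedly invokes local finiteness, but the lemma assumes only that $Y$ is finite dimensional. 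There is also the smaller issue that the definition of an amicable immersion does not place $\wt X_{\wt v}$ inside the invariant product subcomplex $\wt Y_{\wt v}=\prod_i C_i$ of Theorem~\ref{thm:isometricTori}, so your opening reduction needs justification.

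The paper's proof shows what all of this machinery is standing in for; it never uses the product decomposition. For a hyperplane $H$ meeting the flat $\wt X_{\wt v}$ in a geodesic line, choose $g\in G_{\wt v}$ translating transversally to that line; finite dimensionality forces $H$ and $g^NH$ to be disjoint for some $N$, since otherwise $\{g^rH\}_{r\in\mathbb{Z}}$ would be an infinite family of pairwise-crossing hyperplanes. Combined with the finiteness of the number of $G_{\wt v}$-orbits of hyperplanes meeting $\wt X_{\wt v}$, this partitions those hyperplanes into finitely many families $\{g_i^rH_i\}_{r\in\mathbb{Z}}$, each pairwise disjoint and already indexed by $\mathbb{Z}$; the $i$-th coordinate of a $0$-cube $y$ of $\hull(\wt X_{\wt v})$ is then the unique $y_i$ such that $y$ lies between $g_i^{y_i}H_i$ and $g_i^{y_i+1}H_i$. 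In other words, the finitely many ``parallel families'' that you try to extract by colouring crossing graphs inside each quasiline factor are produced in one stroke by the $\mathbb{Z}^2$-action on the flat, with no trees, no colouring, and no local finiteness needed. If you want to keep your framework, replace the colouring-and-trees machinery by this orbit argument.
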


\begin{proof}
Let $G = \pi_1 X$.
Let $y$ be a $0$-cube in $\hull(\wt{X}_{\wt{v}})$.
If $H$ is a hyperplane in $\wt{Y}$, let $y[H]$ denote the halfspace of $H$ containing $y$.
Each $0$-cube is determined by the halfspace containing it for each hyperplane.
If $H$ is a hyperplane that doesn't intersect $\wt{X}_{\wt{v}}$, then $y[H]$ is the halfspace containing $\wt{X}_{\wt{v}}$, and therefore is $y[H]$ fixed for all $0$-cubes $y$ in $\hull(\wt{X}_{\wt{v}})$.

Let $\mathcal{H}_{\wt{v}}$ denote the hyperplanes intersecting $\wt{X}_{\wt{v}}$.
Let $H \in \mathcal{H}_{\wt{v}}$.
The intersection $\wt{X}_{\wt{v}} \cap H$ is a geodesic line in $\wt{X}_{\wt{v}}$.
Let $g \in G_{\wt{v}}$ be an isometry that stabilizes an axis in $\wt{X}_{\wt{v}}$ that is not parallel to $\wt{X}_{\wt{v}} \cap H$.
Then $\{g^r H \}_{r\in \mathbb{Z}}$ is an infinite family of hyperplanes such that $\{g^r H \cap \wt{X}_{\wt{v}} \}_{r\in \mathbb{Z}}$ is a set of disjoint parallel lines in $\wt{X}_{\wt{v}}$.
As $\wt{Y}$ is finite dimensional, there exists an $N$ such that $H$ and $g^N H$ do not intersect.
Otherwise $\{g^r H \}_{r\in \mathbb{Z}}$ would be an infinite set of pairwise intersecting hyperplanes, which would imply that there are cubes of arbitrary dimension in $\wt{Y}$.

Therefore, as there are only finitely many $G_{\wt{v}}$-orbits of hyperplanes intersecting $\wt{X}_{\wt{v}}$, there exists a finite set of hyperplanes $H_1, \ldots, H_d \in \mathcal{H}_{\wt{v}}$ and $g_1, \ldots, g_d \in G$ such that $\mathcal{H}_{\wt{v}} = \{ g_1^r H_1, \ldots, g_d^r H_d \}_{r\in \mathbb{Z}}$, and each $\{ g_i^r H_i \}_{r\in \mathbb{Z}}$ is a disjoint set of hyperplanes in $\wt{Y}$.
Therefore $\{g_i^r H_i \cap \wt{E} \}_{r\in\mathbb{Z}}$ is a set of disjoint geodesic lines in $\wt{X}_{\wt{v}}$.
Thus, given a $0$-cube $y$, there exists a unique $y_i \in \mathbb{Z}$ such that $y[g_i^{y_i} H_i]$ and $y[g_i^{y_i + 1} H_i]$ properly intersect each other.
Therefore, construct $\phi: \hull(\wt{X}_{\wt{v}}) \rightarrow \mathbf{R}^d$ by letting $\phi(y) = (y_1, \ldots, y_d)$ for each $0$-cube $y$.
The map $\phi$ extends to the $1$-skeleton of $\hull(\wt{X}_{\wt{v}})$ since adjacent $0$-cubes lie on the opposite sides of precisely one hyperplane.
Therefore $\phi$ extends to the higher dimensional cubes, and thus $\hull(\wt{X}_{\wt{v}})$.
\end{proof}


Let $\wt{X} \rightarrow \wt{Y}$ be the lift of the universal cover of an amicable immersion $X \rightarrow Y$.
 Let $\wt{X}_{\wt{e}}$ be an edge space adjacent to a vertex space $\wt{X}_{\wt{v}}$.
 A hyperplane $H$ in $\wt{Y}$ intersects $\wt{X}_{\wt{v}}$ \emph{parallel} to $\wt{X}_{\wt{e}}$ if $H \cap \wt{X}_{\wt{v}}$ is a geodesic line parallel to $\wt{X}_{\wt{e}} \cap \wt{X}_{\wt{v}}$.
Otherwise, if $H \cap \wt{X}_{\wt{v}}$ is a geodesic line that is not parallel to $\wt{X}_{\wt{e}} \cap \wt{X}_{\wt{v}}$, then we say $H$ intersects $\wt{X}_{\wt{v}}$ \emph{non-parallel} to $\wt{X}_{\wt{e}}$.

\begin{lem} \label{lem:hyperplaneIntersections}
 Let $X \rightarrow Y$ be an amicable immersion.
 Let $\wt{e}$ be an edge in $\wt{\Gamma}$.
 Suppose that $H \subseteq \wt{Y}$ is a hyperplane intersecting $\wt{X}_{-\wt{e}}$ non-parallel to $\wt{X}_{\wt{e}}$, then $H$ intersects $\wt{X}_{+\wt{e}}$ non-parallel to $\wt{X}_{\wt{e}}$.
 Moreover, there is an arc in $H \cap \wt{X}_{\wt{e}}$ joining $H \cap \wt{X}_{-\wt{e}}$ to $H \cap \wt{X}_{+\wt{e}}$.
\end{lem}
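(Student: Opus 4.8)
The plan is to reduce the statement to a topological fact about a closed $1$--submanifold of the strip $\wt{X}_{\wt{e}}\cong\reals\times[-1,1]$ and then to extract that fact from a parity count carried out in the compact cylinder $G_{\wt{e}}\backslash\wt{X}_{\wt{e}}$.

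\emph{Step 1 (setting up inside the strip).} By axiom (3) of an amicable immersion, $\ell^-:=H\cap\wt{X}_{-\wt{e}}$ is a geodesic line in the Euclidean plane $\wt{X}_{-\wt{e}}$; the attaching line $a^-:=\wt{X}_{-\wt{e}}\cap\wt{X}_{\wt{e}}$ is a $G_{\wt{e}}$--axis, hence also a geodesic line there, and by hypothesis $\ell^-$ is non-parallel to it, so the two distinct lines meet in a single point $p^-$. In particular $p^-\in H\cap\wt{X}_{\wt{e}}$, so $H$ meets the strip. Writing $a^+:=\wt{X}_{+\wt{e}}\cap\wt{X}_{\wt{e}}$ for the other boundary line and using that $\wt{X}_{\wt{e}}$ is embedded transverse to the hyperplanes (axiom (4)), the set $C:=H\cap\wt{X}_{\wt{e}}$ is a closed $1$--submanifold of $\wt{X}_{\wt{e}}$ meeting $\partial\wt{X}_{\wt{e}}=a^-\sqcup a^+$ transversally; no boundary line of the strip lies in $H$, so $C\cap a^-=\{p^-\}$ while $C\cap a^+$ is empty or a single point. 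It thus suffices to show $C\cap a^+\neq\emptyset$ and that $p^-$ lies in the same component of $C$ as a point of $a^+$: the former gives a point of $\ell^+:=H\cap\wt{X}_{+\wt{e}}$ on $a^+$, whence $\ell^+\neq a^+$ is non-parallel to $a^+$, so $H$ meets $\wt{X}_{+\wt{e}}$ non-parallel to $\wt{X}_{\wt{e}}$; the latter gives the required arc.

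\emph{Step 2 (the parity count).} Let $g$ generate $G_{\wt{e}}\cong\integers$; it acts on $\wt{X}_{\wt{e}}$ by a translation and on $\wt{X}_{\pm\wt{e}}$ by translation along $a^\pm$, and $g^rH\neq H$ for $r\neq 0$ since a $g$--invariant line in $\wt{X}_{-\wt{e}}$ would be parallel to $a^-$. Because every compact subset of $\wt{Y}$ meets only finitely many hyperplanes, $\bigcup_r g^rH$ is closed, so $\widehat{C}:=\big(\bigcup_r g^rH\big)\cap\wt{X}_{\wt{e}}$ is a closed, $G_{\wt{e}}$--invariant subset of $\wt{X}_{\wt{e}}$; hence it descends to a \emph{compact} subset $\overline{C}$ of the compact cylinder $G_{\wt{e}}\backslash\wt{X}_{\wt{e}}$. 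Transversality makes $\overline{C}$ a finite graph whose only odd--valence vertices are its free ends, each of which lies on $\partial(G_{\wt{e}}\backslash\wt{X}_{\wt{e}})$; and since $\widehat{C}\cap a^-=G_{\wt{e}}\cdot p^-$, exactly one free end $\overline{p}^-$ of $\overline{C}$ lies on the image of $a^-$. As a finite graph has an even number of odd--valence vertices, the component of $\overline{C}$ through $\overline{p}^-$ carries a further free end, necessarily on the image of $a^+$; lifting that point to $\widehat{C}$ and translating by an element of $G_{\wt{e}}$ gives $H\cap a^+\neq\emptyset$. This proves the first assertion.

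\emph{Step 3 (the arc, and the main obstacle).} To promote this to an arc inside $H\cap\wt{X}_{\wt{e}}$ one follows the component $D_0$ of $\overline{C}$ through $\overline{p}^-$ and lifts a path in $D_0$ from $\overline{p}^-$ to its free end on the image of $a^+$, obtaining a path in $\widehat{C}$ from $p^-$ to a point of $a^+$. The trouble is that this path may pass through points where $H$ crosses a translate $g^rH$, and so need not lie in $H\cap\wt{X}_{\wt{e}}$ alone. The plan is to remove this by showing the family $\{g^rH\}$ is pairwise disjoint --- so $\widehat{C}$ is a disjoint union, $D_0$ is a single embedded arc, and the component of $H\cap\wt{X}_{\wt{e}}$ through $p^-$, being a connected cover of an arc, is exactly that arc from $p^-$ to $a^+$ --- or, failing that, by invoking the combinatorial convex hull hypothesis (axiom (5)), $\wt{X}_{\wt{e}}\subseteq\hull\big(\bigcup_{\wt{v}}\wt{X}_{\wt{v}}\big)$, to rule out a component of $C$ escaping to infinity in the strip. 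Pinning down this component structure --- in particular controlling hyperplane crossings inside the strip --- is the step I expect to be the main obstacle; the rest uses only the standard facts that hyperplanes of $\wt{Y}$ are convex, connected, two--sided and locally finite.
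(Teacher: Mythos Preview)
Your Step~1 is correct and in fact more carefully argued than the paper's version. The difficulty you set up for yourself in Steps~2--3, however, is artificial: you pass to the $G_{\wt e}$--orbit $\bigcup_r g^rH$ and then struggle to separate $H$ back out from its translates. The paper avoids this entirely by working with $C=H\cap\wt X_{\wt e}$ alone and observing that $C$ is already \emph{compact}. You have the ingredients for this in your own Step~2: you note that $g^rH\neq H$ for $r\neq 0$, and that every compact subset of $\wt Y$ meets only finitely many hyperplanes. Apply the second fact to a compact fundamental domain $F\subset\wt X_{\wt e}$ for the $G_{\wt e}$--action: only finitely many of the distinct hyperplanes $g^rH$ meet $F$, equivalently $H$ meets only finitely many translates $g^{-r}F$, so $C$ lies in a finite union of such translates and is therefore bounded, hence compact. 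This is exactly the content of the paper's sentence ``As $\wt X_{\wt e}$ is $G_{\wt e}$--invariant and only finitely many hyperplanes separate any two points in $\wt X$, we can deduce that $p$ is an endpoint of a compact curve\ldots''.

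Once $C$ is a compact $1$--manifold with boundary in $a^-\sqcup a^+$, the component through $p^-$ is an arc (it has a boundary point, so it is not a circle), and since you already checked $C\cap a^-=\{p^-\}$, its other endpoint lies on $a^+$. Both conclusions of the lemma follow immediately, and your ``main obstacle'' --- controlling crossings of $H$ with its $G_{\wt e}$--translates inside the strip --- never arises. Your parity argument in Step~2 is not wrong, but it proves only $H\cap a^+\neq\emptyset$ and leaves you with exactly the problem you identify in Step~3; neither of the two fixes you propose there (pairwise disjointness of $\{g^rH\}$, or an appeal to axiom~(5)) is needed, and the first need not hold in general.
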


 \begin{proof}
 Let $G = \pi_1 X$.
 The geodesic lines $H \cap \wt{X}_{-\wt{e}}$ and $\wt{X}_{\wt{e}} \cap \wt{X}_{-\wt{e}}$ are non parallel in $\wt{X}_{-\wt{e}}$, and therefore intersect in a single point $p \in \wt{X}_{\wt{e}} \cap \wt{X}_{-\wt{e}}$.
 As $H$ is two sided in $\wt{Y}$ and $X$ the vertex and edge spaces are transverse to $H$, the intersection of $H$ with $X$ is also locally two sided in $\wt{X}$.
 Therefore, $p$ is contained inside a curve in $H \cap \wt{X}_{\wt{e}}$.
 As $\wt{X}_{\wt{e}}$ is $G_{\wt{e}}$-invariant and only finitely many hyperplanes separate any two points in $\wt{X}$, we can deduce that $p$ is an endpoint of a compact curve in $H \cap \wt{X}_{\wt{e}}$ with its other endpoint contained in $\wt{X}_{+\wt{e}} \cap \wt{X}_{\wt{e}}$.
 Thus, $H$ must also intersect $\wt{X}_{+\wt{e}}$ non-parallel to $\wt{X}_{\wt{e}}$.
 \end{proof}

\begin{lem} \label{lem:fortifiedSpecialCase}
Let $X \rightarrow Y$ be an amicable immersion, where $Y$ is a finite dimensional, locally finite, nonpositively curved cube complex.
%
 If $\pi_1 X \cong \mathbb{Z}^2 *_{\mathbb{Z}} \mathbb{Z}^2$, then for every vertex space $\wt{X}_{\wt{v}}$ and adjacent edge space $\wt{X}_{\wt{e}}$ there is a hyperplane $H$ in $\wt{Y}$ that intersects $\wt{X}_{\wt{v}}$ parallel to $\wt{X}_{\wt{e}}$.
\end{lem}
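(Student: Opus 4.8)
The plan is to argue by contradiction, using Lemma~\ref{lem:hyperplaneIntersections} to produce a "flow--through'' phenomenon and then local finiteness of $\wt{Y}$ to obtain an impossible configuration. Suppose some vertex space $\wt{X}_{\wt{v}}$ and adjacent edge space $\wt{X}_{\wt{e}}$ admit no hyperplane of $\wt{Y}$ crossing $\wt{X}_{\wt{v}}$ parallel to $\wt{X}_{\wt{e}}$. After reorienting $\wt{e}$, assume $\wt{v} = -\wt{e}$ and write $\wt{u} = +\wt{e}$, $\ell_{\pm} = \wt{X}_{\wt{e}} \cap \wt{X}_{\pm\wt{e}}$, and let $h$ generate $G_{\wt{e}}$, so that $h$ translates $\wt{X}_{\wt{v}}$ along the geodesic line $\ell_{-}$. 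Since $\pi_1 X \cong \mathbb{Z}^2 *_{\mathbb{Z}} \mathbb{Z}^2$ has a single edge orbit, the edges of $\wt{\Gamma}$ incident to $\wt{v}$ are exactly the $g\wt{e}$ with $g \in G_{\wt{v}}$, and they biject with $G_{\wt{v}}/G_{\wt{e}}$, which is infinite because $[G_{\wt{v}}:G_{\wt{e}}] = \infty$.

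The first step is the flow--through. By the amicability of the immersion and the standing (negated) hypothesis, every hyperplane $H$ meeting $\wt{X}_{\wt{v}}$ meets it in a geodesic line non-parallel to $\ell_{-}$; such a line crosses every $G_{\wt{v}}$--translate $g\ell_{-}$ of $\ell_{-}$ inside $\wt{X}_{\wt{v}}$, since these translates are a discretely spaced family of lines of the same direction filling the flat. As each $g\ell_{-}$ is the intersection of $\wt{X}_{\wt{v}}$ with an edge space incident to $\wt{v}$, Lemma~\ref{lem:hyperplaneIntersections} shows that $H$ crosses \emph{every} edge space incident to $\wt{v}$ and crosses every opposite vertex space $g\wt{X}_{\wt{u}}$ (non-parallel to $g\wt{X}_{\wt{e}}$), for every $g \in G_{\wt{v}}$. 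In particular, letting $\mathcal{H}_{\wt{v}}$ denote the (infinite) set of hyperplanes meeting $\wt{X}_{\wt{v}}$, each member of $\mathcal{H}_{\wt{v}}$ simultaneously meets the infinitely many pairwise distinct flats $\{\, g\wt{X}_{\wt{u}} : g \in G_{\wt{v}}/G_{\wt{e}}\,\}$.

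The second step controls the edge strip. Because $G_{\wt{e}} \cong \mathbb{Z}$ acts cocompactly on $\wt{X}_{\wt{e}} \cong \mathbb{R}\times[-1,1]$, which is transverse to the hyperplanes of the locally finite complex $\wt{Y}$, only finitely many hyperplanes meet $\wt{X}_{\wt{e}}$ in an arc separating $\ell_{-}$ from $\ell_{+}$ inside $\wt{X}_{\wt{e}}$ while touching neither boundary line; call these the \emph{long arcs} $H_1^{*}, \dots, H_W^{*}$, ordered from the $\ell_{-}$ side to the $\ell_{+}$ side. Each $H_j^{*}$ is $\langle h\rangle$--invariant (an order-preserving $\mathbb{Z}$-action on a finite ordered set is trivial); it has $\wt{X}_{\wt{v}}$ entirely on one side; and, by the hypothesis, it does \emph{not} cross $\wt{X}_{\wt{v}}$, for otherwise its trace in $\wt{X}_{\wt{v}}$ would be a line disjoint from $\ell_{-}$, hence parallel to $\ell_{-}$. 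On the other hand, every $H \in \mathcal{H}_{\wt{v}}$ has points on both sides of each $H_j^{*}$ (points near $\ell_{-}$ inside $\wt{X}_{\wt{v}}$, and points near $\ell_{+}$ inside $\wt{X}_{\wt{u}}$ reached by the flow--through), so $H$ crosses every $H_j^{*}$; the analogous statement holds for the $g$--translates of the $H_j^{*}$ living in each strip $g\wt{X}_{\wt{e}}$.

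The final step -- and the main obstacle -- is to convert this into a contradiction. I would combine the fact that $\hull(\wt{X}_{\wt{v}})$ embeds in some $\mathbf{R}^d$ (Lemma~\ref{lem:flatHull}), so is locally finite, with the preceding structure: tracking a fixed $H \in \mathcal{H}_{\wt{v}}$ through the infinitely many edge spaces at $\wt{v}$ and translating the "closest long arc'' $H_1^{*}$ by $G_{\wt{v}}$ produces a family of hyperplanes, none crossing $\wt{X}_{\wt{v}}$ and all crossed by the entire infinite family $\mathcal{H}_{\wt{v}}$, from which one is forced either to an infinite cube (contradicting finite dimensionality) or to a $0$--cube of $\wt{Y}$ incident to infinitely many $1$--cubes (contradicting local finiteness, this being the cube-complex avatar of the infinite valence of the Bass--Serre tree, exactly as in the Example following Proposition~\ref{prop:virtually special}). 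Making this dichotomy precise, and disposing of the degenerate possibilities $W = 0$ and $\wt{X}_{\wt{v}} \cap \wt{X}_{\wt{u}} \neq \emptyset$, is where the real work lies; the rest is bookkeeping with halfspaces and the $G$--action.
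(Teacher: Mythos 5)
Your setup — negating the conclusion, observing that (since all edges at a vertex lie in one $G_{\wt{v}}$-orbit) every hyperplane meeting $\wt{X}_{\wt{v}}$ is non-parallel to \emph{all} adjacent edge spaces, and then invoking Lemma~\ref{lem:hyperplaneIntersections} to flow such a hyperplane through every incident edge space into every adjacent vertex space — matches the opening of the paper's argument. But the proof stops exactly where the paper's proof begins to do its real work: you concede that converting the flow-through picture into a contradiction "is where the real work lies," and the dichotomy you gesture at (an infinite cube, or a $0$-cube of infinite valence) is not the mechanism that actually closes the argument, nor is it clear it can be made to work. The pairwise-crossing family you would need for an infinite cube is not available: the "long arcs" in the various edge strips $g\wt{X}_{\wt{e}}$ are hyperplanes of $\wt{Y}$ that may be nested or disjoint from one another, and nothing in your configuration forces infinitely many of them to pairwise cross. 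The infinite-valence alternative is likewise out of reach by your method: in Proposition~\ref{prop:fortified} one manufactures $0$-cubes of a \emph{dual} cube complex by flipping a single halfspace, but here $\wt{Y}$ is a given CAT(0) cube complex, not a dual construction, and you cannot conjure new vertices of $\wt{Y}$ by choosing orientations of hyperplanes.

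The paper's actual contradiction runs through a product decomposition and a growth argument, and both ingredients are absent from your sketch. Writing $\hat{\mathcal{H}} = \mathcal{H} - \mathcal{H}_{\wt{v}_1}$, every hyperplane of $\hat{\mathcal{H}}$ crosses every hyperplane of $\mathcal{H}_{\wt{v}_1}$ (precisely because of the flow-through), so $\wt{Y} = \hull(\wt{X}_{\wt{v}_1}) \times C(\wt{Y},\hat{\mathcal{H}})$ by the Caprace--Sageev lemma; local finiteness then shows only finitely many hyperplanes of $\hat{\mathcal{H}}$ meet any $r$-neighborhood of $\wt{X}_{\wt{v}_1}$. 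One then takes two vertex spaces $\wt{X}_{\wt{v}_1}, \wt{X}_{\wt{v}_2}$ in the same orbit joined through an intermediate vertex, chooses $g_1 \in G_{\wt{v}_1}$ and $g_2 \in G_{\wt{v}_2}$ translating non-parallel to the relevant edge lines so that $\langle g_1, g_2\rangle$ is free of rank $2$, and passes to powers $g_1^n, g_2^m$ stabilizing the finitely many hyperplanes of $\hat{\mathcal{H}}$ near the connecting region. The hull of the resulting orbit is $\hull(\wt{X}_{\wt{v}_1}) \times K$ with $K$ compact, which has polynomial growth because $\hull(\wt{X}_{\wt{v}_1})$ embeds in $\mathbf{R}^d$ by Lemma~\ref{lem:flatHull} — and a nonabelian free group cannot act freely on such a complex. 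Your proposal cites Lemma~\ref{lem:flatHull} but never isolates the free subgroup or the product structure, and without them there is no contradiction; this is a genuine gap, not bookkeeping.
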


\begin{proof}
Let $G = \pi_1 X$.
There are precisely two vertex orbits and one edge orbit in $\wt{\Gamma}$.
Assume that $\wt{Y} = \hull(\wt{X})$.
Let $\mathcal{H}$ denote the set of all hyperplanes in $\wt{Y}$ intersecting $\wt{X}$.
Let $\mathcal{H}_{\wt{v}}$ denote the set of all hyperplanes intersecting $\wt{X}_{\wt{v}}$.

For each vertex $\wt{v}$, there is precisely one $G_{\wt{v}}$-orbit of adjacent edges $\wt{e}$.
Therefore, if $H \in \mathcal{H}_{\wt{v}}$ is non-parallel to an adjacent edge space to $\wt{X}_{\wt{v}}$, it is non-parallel to all adjacent edge spaces, and by Lemma~\ref{lem:hyperplaneIntersections} it must intersect all adjacent vertex spaces non-parallel to all adjacent edge spaces.
Therefore, we deduce that any hyperplane that doesn't intersect every vertex space in $\wt{X}$ will either intersect a vertex space parallel to its adjacent edge spaces, or its intersection will be a line contained in an edge space.

 Suppose that there exists a vertex space $\wt{X}_{\wt{v}_1}$  such that no hyperplane in $\mathcal{H}_{\wt{v}_1}$ intersects $\wt{X}_{\wt{v}_1}$ parallel to the adjacent edge spaces.
Let $\hat{\mathcal{H}} = \mathcal{H} - \mathcal{H}_{\wt{v}_1}$.
 Every hyperplane in $\mathcal{H} - \mathcal{H}_{\wt{v}_1}$ must intersect each wall in $\mathcal{H}_{\wt{v}_1}$ so we deduce that $\wt{Y} = \hull(\wt{X}_{\wt{v}_1}) \times C(\wt{Y}, \hat{\mathcal{H}})$ (see~\cite[Lem 2.5]{CapraceSageev11}).
Furthermore, $\mathcal{H}_{g\wt{v}_1} = \mathcal{H}_{\wt{v}_1}$ for all $g\in G$.
By Lemma~\ref{lem:flatHull}, $\hull(\wt{X}_{\wt{v}})$ embeds in $\mathbf{R}^d$.
Since $\wt{X}_{\wt{v}_1}$ is contained inside some subcomplex $\hull(\wt{X}_{\wt{v}_1}) \times C \subseteq \wt{Y}$, where $C$ is the $0$-cube determined by orienting all hyperplanes towards $\wt{X}_{\wt{v}_1}$, we can conclude that only finitely many hyperplanes in $\hat{\mathcal{H}}$ intersect the $r$-neighborhood of $\wt{X}_{\wt{v}_1}$.

Let $\wt{X}_{\wt{u}}$ be an vertex space adjacent to $\wt{X}_{\wt{v}_1}$, and let $\wt{X}_{\wt{e}_1}$ be the edge space connecting them.
Let $\wt{X}_{\wt{v}_2}$ be another vertex space adjacent to $\wt{X}_{\wt{u}}$ and let $\wt{X}_{\wt{e}_2}$ be the edge space connecting them.
Note that $\wt{X}_{\wt{v}_1}$ and $\wt{X}_{\wt{v}_2}$ are in the same $G_{\wt{u}}$-orbit.
The geodesic lines $\wt{X}_{\wt{u}} \cap \wt{X}_{\wt{e}_1}$ and $\wt{X}_{\wt{u}} \cap \wt{X}_{\wt{e}_2}$ are parallel in $\wt{X}_{\wt{u}}$.
Let $D \subseteq \wt{X}_{\wt{u}}$ be the subspace isometric to $\mathbb{R} \times [a,b]$ bounded by these parallel lines.
Let $U = D \cup \wt{X}_{\wt{e}_1} \cup \wt{X}_{\wt{e}_2}$.
Finitely many hyperplanes in $\hat{\mathcal{H}}$ intersect $D$.

Let $g_1$ be an isometry in $G_{\wt{v}_1}$ that stabilizes an axis in $\wt{X}_{\wt{v}_1}$ that is non-parallel to the geodesic $\wt{X}_{\wt{v}_1} \cap \wt{X}_{\wt{e}_1}$.
Similarly, let $g_2$ be an isometry in $G_{\wt{v}_2}$ that stabilizes an axis in $\wt{X}_{\wt{v}_2}$ that is non-parallel to the geodesic $\wt{X}_{\wt{v}_2} \cap \wt{X}_{\wt{e}_2}$.
Note that $F = \langle g_1, g_2 \rangle$ is a free group on two generators.
Let $r$ be such that $U$ is contained in the $r$-neighbourhood of $\wt{X}_{\wt{v}_1}$.
As there are only finitely many hyperplanes in $\hat{\mathcal{H}}$ intersecting $\neb_r(\wt{X}_{\wt{v}_1})$, there must exist an $n$ such that $g_1^n$ stabilizes those walls.
Similarly, since $\wt{X}_{\wt{v}_2}$ is a translate of $\wt{X}_{\wt{v}_1}$, we can deduce that there are only finitely many hyperplanes in $\hat{\mathcal{H}}$ intersecting $\neb_r(\wt{X}_{\wt{v}_2})$, and there must exist an $m$ such that $g_2^m$ stabilizes those walls.
Let $F' = \langle  g_1^n, g_2^m \rangle$.
As $U$ lies in both $\neb_r(\wt{X}_{\wt{v}_1})$ and $\neb_r(\wt{X}_{\wt{v}_2})$ we can deduce that the hyperplanes in $\hat{\mathcal{H}}$ that intersect the $F'$-translates of $U$ are precisely the hyperplanes intersecting $U$.
Let $\wt{Z} = F' \wt{X}_{\wt{v}_1} \cup F' \wt{X}_{\wt{v}_2} \cup F' U$.
Then $\hull(\wt{Z}) = \hull(\wt{X}_{\wt{v}_1}) \times K \subseteq \wt{Y}$, where $K$ is a compact cube complex.
Then $F$ acts freely on $\hull(\wt{Z})$, which is a contradiction since number of $0$-cubes intersecting the $r$-neighborhood of a $0$-cube in $\hull(\wt{X}_{\wt{v}_1}) \times K $ grows polynomially with $r$, and therefore cannot permit a free $F$-action.
\end{proof}

\noindent Lemma~\ref{lem:fortifiedSpecialCase} is a special case of the following more general statement:

\begin{cor} \label{cor:fortifiedGeneralCase}
 Let $X \rightarrow Y$ be an amicable immersion, where $Y$ is a finite dimensional, locally finite, nonpositively curved cube complex.
%
 Then for every vertex space $\wt{X}_{\wt{v}}$ and adjacent edge space $\wt{X}_{\wt{e}}$ there is a hyperplane $H$ in $\wt{Y}$ that intersects $\wt{X}_{\wt{v}}$ parallel to $\wt{X}_{\wt{e}}$.
\end{cor}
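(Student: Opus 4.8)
The plan is to reduce Corollary~\ref{cor:fortifiedGeneralCase} to Lemma~\ref{lem:fortifiedSpecialCase} by restricting attention to a suitable subgroup and subspace. Fix a vertex space $\wt{X}_{\wt{v}}$ and an adjacent edge space $\wt{X}_{\wt{e}}$; let $\wt{u}$ be the vertex at the other end of $\wt{e}$. The element of $G_{\wt{e}}$ generating the edge group, together with elements $g_{\wt{v}} \in G_{\wt{v}}$ and $g_{\wt{u}} \in G_{\wt{u}}$ acting on $\wt{X}_{\wt{v}}$ and $\wt{X}_{\wt{u}}$ by translations non-parallel to the relevant edge-space intersections, generate a subgroup $H \leqslant G$ that is isomorphic to $\mathbb{Z}^2 *_{\mathbb{Z}} \mathbb{Z}^2$ (one checks this via the Bass--Serre tree: $H$ acts on the minimal subtree spanned by $\wt{v}$ and $\wt{u}$ and their $H$-translates, with $\mathbb{Z}^2$ vertex stabilizers $\langle g_{\wt{v}}, G_{\wt{e}}\rangle$ and $\langle g_{\wt{u}}, G_{\wt{e}}\rangle$, and $\mathbb{Z}$ edge stabilizer $G_{\wt{e}}$).

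Next I would produce a tubular space $X_H$ for $H$ with two vertex spaces (tori) mapping to $\wt{X}_{\wt{v}}$ and $\wt{X}_{\wt{u}}$ and one edge space mapping to $\wt{X}_{\wt{e}}$, and check that the restriction of the amicable immersion data gives an amicable immersion $X_H \rightarrow Y_H$, where $Y_H = H\backslash \hull_{\wt Y}(\wt X_H)$ (or a cube complex built from it). The key point is that $\wt Y$ being finite dimensional and locally finite passes to any $H$-cocompact convex subcomplex, and the amicable-immersion axioms (embedding of vertex spaces, hyperplane-intersection being a single geodesic line, transversality of edge spaces, the hull condition) are all inherited by restriction. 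Then Lemma~\ref{lem:fortifiedSpecialCase} applied to $X_H \rightarrow Y_H$ yields a hyperplane $H_0$ of $\wt Y$ intersecting $\wt{X}_{\wt{v}}$ parallel to $\wt{X}_{\wt{e}}$, which is exactly the conclusion sought for the original $(\wt{X}_{\wt{v}}, \wt{X}_{\wt{e}})$.

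The step I expect to be the main obstacle is verifying that the local finiteness of $\wt Y$ is actually needed and used correctly in the passage to $H$ — or rather, ensuring that the subcomplex $\hull_{\wt Y}(\wt X_H)$ together with its hyperplanes still satisfies the hypotheses of Lemma~\ref{lem:fortifiedSpecialCase} verbatim, in particular that hyperplanes of the ambient $\wt Y$ restrict to hyperplanes of the hull and that the edge space $\wt{X}_{\wt{e}}$ remains contained in $\hull(\wt{X}_{\wt{v}} \cup \wt{X}_{\wt{u}})$ inside the smaller complex. Since combinatorial convex hulls behave well under intersection with convex subcomplexes, and hyperplanes of a convex subcomplex are exactly the restrictions of the ambient hyperplanes that meet it, this should go through, but it requires a careful statement. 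An alternative, and perhaps cleaner, route that avoids building $X_H$ from scratch is to argue directly: run the proof of Lemma~\ref{lem:fortifiedSpecialCase} inside $\wt Y$ using only the free group $F = \langle g_{\wt v}, g_{\wt u}\rangle$ and the slab $U = D \cup \wt{X}_{\wt{e}_1} \cup \wt{X}_{\wt{e}_2}$ exactly as there, deriving the same polynomial-growth contradiction. Either way, the substance is entirely contained in Lemma~\ref{lem:fortifiedSpecialCase}, and the corollary is a matter of exhibiting the right $\mathbb{Z}^2 *_{\mathbb{Z}} \mathbb{Z}^2$ configuration locally around each chosen pair $(\wt{X}_{\wt{v}}, \wt{X}_{\wt{e}})$.
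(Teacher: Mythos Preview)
Your proposal is correct and follows the same strategy as the paper: reduce to Lemma~\ref{lem:fortifiedSpecialCase} by passing to a subgroup isomorphic to $\mathbb{Z}^2 *_{\mathbb{Z}} \mathbb{Z}^2$ built from the two vertex groups adjacent to $\wt{e}$. The paper streamlines the construction by taking $G' = \langle G_{-\wt{e}}, G_{+\wt{e}} \rangle$ (the full vertex groups, not proper rank-$2$ subgroups) and setting $Y' = G' \backslash \wt{Y}$ directly---since $\wt{Y'} = \wt{Y}$, local finiteness and finite dimensionality are inherited for free, so your detour through convex hulls and the attendant cocompactness worries is unnecessary.
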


\begin{proof}
For every edge $\wt{e}$ in $\wt{\Gamma}$ there is a subgroup $G' = \langle G_{-\wt{e}}, G_{+\wt{e}} \rangle \leq G$ such that $G' \cong \mathbb{Z}^2 *_{\mathbb{Z}} \mathbb{Z}^2$.
Let $Y' = G' \backslash \wt{Y}$.
Then there is an amicable immersion $X' \rightarrow Y'$ such that $\wt{X}'_{\wt{e}} = \wt{X}$ and $\wt{X}'_{\pm \wt{e}} = \wt{X}_{\pm \wt{e}}$.
Therefore, by Lemma~\ref{lem:fortifiedSpecialCase}, there is a hyperplane intersecting $\wt{X}_{-\wt{e}}$ parallel to $\wt{X}_{\wt{e}}$, and similarly for $\wt{X}_{+\wt{e}}$.
\end{proof}

\noindent The following proposition is a strengthening of one direction of Theorem 1.1 in~\cite{Wise13}.
Let $f_1: A \rightarrow C$ and $f_2: B \rightarrow C$ be maps between topological spaces $A,B,C$.
The \emph{fiber product} $A \otimes_C B = \{ (a,b)\in A\times B \mid f_1(a) = f_2(b) \}$.
Note that there are natural projections $p_1: A \otimes_C B \rightarrow A$ and $p_2: A \otimes_C B \rightarrow B$.

\begin{prop} \label{prop:inheritFiniteDim}
 Let $G$ be a tubular group acting freely on a CAT(0) cube complex $\wt{Y}$.
 Then there is a tubular space $X$ with a finite set of immersed walls such that the associated wallspace $(\wt{X}, \mathcal{W})$ has the following properties:
 \begin{enumerate}
    \item $G$ acts freely on $C(\wt{X}, \mathcal{W})$.
 	\item \label{claim:finDim}  $C(\wt{X}, \mathcal{W})$ is finite dimensional if $\wt{Y}$ is finite dimensional.
 	\item \label{claim:locFin} $C(\wt{X}, \mathcal{W})$ is finite dimensional and locally finite if $\wt{Y}$ is locally finite.
 \end{enumerate}
\end{prop}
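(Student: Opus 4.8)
The plan is to manufacture $X$, together with its system of immersed walls, from an \emph{amicable immersion} $f\colon X\to Y$ with $Y=G\backslash\wt{Y}$, arranged so that the horizontal walls of the associated wallspace are precisely the traces of the hyperplanes of $\wt{Y}$ on $\wt{X}$; the three conclusions then fall out by translating between walls of $(\wt{X},\mathcal{W})$ and hyperplanes of $\wt{Y}$, using Sections~\ref{section:primitive}--\ref{section:FiniteDimensional} and the results quoted from \cite{Woodhouse14}. First I would fix an amicable immersion $f\colon X\to Y$ inducing the identity on $G$ (it exists by the lemma supplying amicable immersions preceding Lemma~\ref{lem:flatHull}). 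For each vertex $v$ of $\Gamma$ choose representatives $H_1,\dots,H_{d_v}$ of the finitely many $G_{\wt{v}}$-orbits of hyperplanes of $\wt{Y}$ that meet the Euclidean flat $\wt{X}_{\wt{v}}$, and let $S_v\subseteq G_v$ consist of the homotopy classes of the geodesic lines $H_i\cap\wt{X}_{\wt{v}}$. Since $G_{\wt{v}}\cong\mathbb{Z}^2$ acts freely on $\hull(\wt{X}_{\wt{v}})$, which embeds in some $\mathbf{R}^{d}$ by Lemma~\ref{lem:flatHull}, these lines cannot all be mutually parallel, so $S_v$ generates a finite-index subgroup of $G_v$. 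For an edge $e$ with endpoints $\pm e$, Lemma~\ref{lem:hyperplaneIntersections} says that a hyperplane meeting $\wt{X}_{-\wt{e}}$ non-parallel to $\wt{X}_{\wt{e}}$ crosses the edge space by a single arc to $\wt{X}_{+\wt{e}}$; collecting these arcs gives a bijection between the intersection points of the curves of $S_{-e}$ with $\varphi^-_e$ and those of $S_{+e}$ with $\varphi^+_e$, so $\#[\varphi^-_e,S_{-e}]=\#[\varphi^+_e,S_{+e}]$ and $\{S_v\}_v$ is an equitable set. I would then build $\Lambda_1,\dots,\Lambda_k$ from this equitable set using exactly this arc-correspondence, so that every lift $\wt{\Lambda}\subseteq\wt{X}$ of an immersed wall maps into a single hyperplane of $\wt{Y}$. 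By \cite{Wise13}, $G$ acts freely on $C(\wt{X},\mathcal{W})$, giving (1).

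For the finite-dimensionality in (2) and (3), I would compute the dilation weighting $\omega$ of Section~\ref{section:primitive}: an arc $\sigma$ of $\Omega$ came from an arc of some $H\cap\wt{X}_{\wt{e}}$ and therefore connects $\alpha^-=[H\cap\wt{X}_{-\wt{e}}]$ to $\alpha^+=[H\cap\wt{X}_{+\wt{e}}]$, while by Lemma~\ref{lem:hyperplaneIntersections} each arc of $H\cap\wt{X}_{\wt{e}}$ has exactly one endpoint on each boundary circle of $X_e$; hence $\#[\varphi^-_e,\alpha^-]=\#[\varphi^+_e,\alpha^+]$ and $\omega(\sigma)=1$. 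Thus the dilation function $\hat{R}$ of every immersed wall is trivial, no immersed wall is dilated, and Theorem~\ref{mainTheoremFrom15} gives that $C(\wt{X},\mathcal{W})$ is finite dimensional --- unconditionally, which proves (2) and the finite-dimensional half of (3). (If one prefers to use the finite-dimensionality hypothesis on $\wt{Y}$ directly, an infinite-dimensional $C(\wt{X},\mathcal{W})$ would, by Proposition~\ref{prop:InfDimInfCube}, give an infinite family of pairwise regularly intersecting walls; these are necessarily horizontal, and their images are an infinite family of pairwise crossing hyperplanes --- distinct, since two lifts inside one hyperplane $H$ meet each $\wt{X}_{\wt{v}}$ in the single line $H\cap\wt{X}_{\wt{v}}$ --- contradicting $\dim\wt{Y}<\infty$.)

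For the local finiteness in (3), assume $\wt{Y}$ is locally finite. Replacing $\wt{Y}$ by $\hull(\wt{X})$ --- still locally finite, and finite dimensional because it is assembled over $\wt{\Gamma}$ from the complexes $\hull(\wt{X}_{\wt{v}})$, each of which embeds in $\mathbf{R}^d$ by Lemma~\ref{lem:flatHull} --- and restricting the amicable immersion accordingly, Corollary~\ref{cor:fortifiedGeneralCase} applies: for every vertex space $\wt{X}_{\wt{v}}$ with adjacent edge space $\wt{X}_{\wt{e}}$ there is a hyperplane meeting $\wt{X}_{\wt{v}}$ parallel to $\wt{X}_{\wt{e}}$, hence a curve in $S_v$ parallel to $\varphi_e$ in $G_v\cong\mathbb{Z}^2$ and so of zero intersection number with $\varphi_e$. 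Therefore $\{S_v\}$, and with it $\Lambda_1,\dots,\Lambda_k$, are fortified, and since $C(\wt{X},\mathcal{W})$ is finite dimensional by the previous paragraph, Proposition~\ref{prop:fortified} gives that $C(\wt{X},\mathcal{W})$ is locally finite. The main obstacle is the first step: verifying that the traces of the hyperplanes of $\wt{Y}$ really do constitute an equitable set --- the finite-index generation of the $\mathbb{Z}^2$ vertex groups (which is where freeness of the action on $\hull(\wt{X}_{\wt{v}})\subseteq\mathbf{R}^d$ is used) and the fact that the bijection of intersection points built into the wall construction can be taken to be the geometric one coming from the hyperplanes; once that is in place, the remainder is bookkeeping over the Bass--Serre tree on top of the already-proven lemmas.
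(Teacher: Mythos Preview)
Your overall strategy --- build an amicable immersion, read off an equitable set from hyperplane traces, then appeal to Proposition~\ref{prop:fortified} and Corollary~\ref{cor:fortifiedGeneralCase} --- is the paper's approach. Two steps do not go through as written.

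The dilation computation is incorrect. Knowing that a single arc of $H\cap\wt X_{\wt e}$ runs from $\alpha^-=[H\cap\wt X_{-\wt e}]$ to $\alpha^+=[H\cap\wt X_{+\wt e}]$ does not yield $\#[\varphi^-_e,\alpha^-]=\#[\varphi^+_e,\alpha^+]$: these two numbers are determinants computed in the \emph{different} lattices $G_{-e}$ and $G_{+e}$, and nothing about a hyperplane pins one to the other. Were $\omega\equiv 1$ true, every tubular group acting freely on \emph{any} CAT(0) cube complex would admit non-dilated walls and hence a finite-dimensional cubulation --- but applying your construction to $\wt Y=C(\wt X,\mathcal W)$ for a dilated wallspace returns essentially the same dilated walls. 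Your parenthetical argument via Proposition~\ref{prop:InfDimInfCube} is the correct route to (\ref{claim:finDim}), and is exactly what the paper does.

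The real gap is in (\ref{claim:locFin}). You need $C(\wt X,\mathcal W)$ (equivalently the ambient complex) finite dimensional before Lemma~\ref{lem:flatHull} or Corollary~\ref{cor:fortifiedGeneralCase} can be invoked, yet you have only assumed $\wt Y$ locally finite. Your fix --- that $\hull(\wt X)$ is finite dimensional because each $\hull(\wt X_{\wt v})$ embeds in $\mathbf R^d$ --- is circular: Lemma~\ref{lem:flatHull} already has finite dimensionality among its hypotheses (and $\hull(\wt X)$ is not assembled from the $\hull(\wt X_{\wt v})$ anyway). The paper closes this gap with a separate argument: if $C(\wt X,\mathcal W)$ contained an infinite cube, take the canonical $0$-cube $z$ and the corresponding pairwise-crossing hyperplanes $H_1,H_2,\dots$ in $\wt Y$; iterated cubical neighbourhoods $U^n(C)$ of a cube $C\ni x$ stay compact (by local finiteness) and convex, so some $H_i$ first meets $U^{N+2}(C)$ but not $U^{N+1}(C)$, and is therefore separated from $x$ by another hyperplane $H$ disjoint from $H_i$. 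The wall corresponding to $H$ then separates $x$ from $\wt\Lambda_i$, contradicting that $z$ is adjacent to a $1$-cube dual to $\wt\Lambda_i$. Only after this does the paper apply Corollary~\ref{cor:fortifiedGeneralCase} to obtain fortification and finish via Proposition~\ref{prop:fortified}.
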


\begin{proof}

Let $Y = G \backslash \wt{Y} $.
Let $X \rightarrow Y$ be an amicable immersion.
Assume that $\wt{Y} = \hull(\wt{X})$, so every immersed hyperplane in $ Y$ intersects $ X$.
Therefore, as $X$ is compact, there are finitely many immersed hyperplanes $h_1, \ldots h_m$ in $Y$.

Let $h_i \rightarrow Y$ be an immersed hyperplane in $Y$.
We obtain horizontal immersed walls in $X$ by considering the components of the fiber product $X \otimes_Y h_i$ of $X \rightarrow Y$ and $h \rightarrow Y$.
Each component $\Lambda$ has a natural map into $X$.
The components of $X \otimes_Y h$ that have image in $X$ contained in an edge space are ignored.
Let $\Lambda^p$ be a component of $X \otimes_Y h$ whose image in $X$ intersects a vertex space $X_v \subseteq X$.
We will show that after a minor adjustment to $\Lambda^p$, we obtain a horizontal immersed wall and by considering all such components we obtain a set of horizontal walls in $X$ obtained from an equitable set.

Using the map $\Lambda^p \rightarrow X$ we can decompose $\Lambda^p$ into the components of the preimages of vertex space and edge spaces.
As the intersection of each hyperplane $H\subseteq \wt{Y}$ with each vertex space $\wt{X}_{\wt{v}}$ is either empty or a geodesic line, the intersection of each $h_i$ with $X_v$ is a set of geodesic curves, so $\Lambda^p$ restricted to the preimage of $X_v$ is a set of geodesic curves.
By Lemma~\ref{lem:hyperplaneIntersections} each hyperplane $H \subseteq \wt{Y}$ that intersects a vertex space $X_{\wt{v}}$ non-parallel to an adjacent edge space $\wt{X}_{\wt{e}}$ will intersect $\wt{X}_{\wt{e}}$ as an arc with endpoints in $\wt{X}_{-\wt{e}}$ and $\wt{X}_{+\wt{e}}$.
Thus the components of the intersection $X_e \cap h_i$ that intersect $X_{-e}$ or $X_{+e}$, are arcs with endpoints in both $X_{-e}$ and $X_{+e}$.
Therefore, $\Lambda^p$ decomposes into circles that map as local geodesics into vertex spaces, and arcs that map into edges spaces $X_e$ with an endpoint in each $X_{-e}$ and $X_{+e}$.

Let $\{\Lambda_1^p, \ldots, \Lambda_k^p\}$ be the set of all such components of $X \otimes_Y h_i$ that intersect vertex spaces.
Let $S_v^p$ be the set curves that map the circles in $\{\Lambda_i^p \}^k_{i=1}$ to the vertex space $X_v$.
The elements of $S_v^p$ and the attaching maps $\varphi_e^{\pm}$ of the edge spaces in $X$ are locally geodesic curves, and $\#[\varphi_e^+, S_{e^+}] = \#[\varphi_e^-, S_{e^-}]$ since both sides are equal to the number of arcs in the walls $\{ \Lambda_i^p \}^k_{i=1}$ that map into $X_e$.
 As $G$ acts freely on $\wt{Y}$, there must be hyperplanes intersecting each vertex space $\wt{X}_{\wt{e}}$ as geodesics in at least two parallelism classes.
This implies that $S_v^p$ contains curves generating at least two non-commensurable cyclic subgroups of $G_v$, and therefore $S_v^p$ generates a finite index subgroup of $G_v$.

 $S_v^p$ is almost an equitable set: the images of the curves in $S_v^p$ may not be disjoint.
 Suppose that $\alpha_1, \ldots \alpha_m \in S_v$ be a maximal set of curves that have identical image in $X_v$.
 Let $\neb_\epsilon(Q)$ denote the $\epsilon$-neighborhood of a subset $Q$ of either $Y$ or $\wt{Y}$ with respect to the CAT(0) metric.
Let $\epsilon \in (0, \frac{1}{3})$ be such that the neighbourhood $\neb_{\epsilon}(\alpha_1) \subseteq Y$ only contains the images of $\alpha_1, \ldots, \alpha_m$ and the arcs connected to them.
There is a homotopy of $\bigsqcup_{i=1}^k \Lambda_i^p \rightarrow X$ that is the identity outside of $\bigsqcup_{i=1}^k \Lambda_i^p \cap N_{\epsilon}(\alpha_1)$ such that $\alpha_1, \ldots, \alpha_m$ are homotoped to a disjoint set of geodesic curves in $X_v \cap N_{\epsilon}(\alpha_1)$ transverse or disjoint from all the other curves in $S_v^p$.
By choosing $\epsilon$ small enough we can perform such a homotopy $\Phi: \bigsqcup_{i=1}^k \Lambda_i^p \times [0,1] \rightarrow X$ such that all sets of overlapping curves in $\{S_{v}^p\}_{v\in V\Gamma}$ become disjoint and such that $\Phi$ is the identity map outside of the $\epsilon$-neighborhood of the overlapping curves.
The restriction of $\Phi$ to $\Lambda_i^p \times \{1 \} \rightarrow X$ is an immersed wall that we will denote by $\Lambda_i$.
Thus, the immersed walls $\{ \Lambda_i \}_{i=1}^k$ obtained from an equitable set $\{ S_v \}_{v \in V\Gamma}$.
We refer to $\{ \Lambda_i^p \}_{i=1}^k$ as the \emph{immersed proto-walls} and the lifts $\wt{\Lambda}_i^p \rightarrow \wt{X}$ as the \emph{proto-walls}.
Note that proto-walls have regular and non-regular intersections in the same way that walls do.

Let $(\wt{X}, \mathcal{W})$ be the wallspace obtained from the immersed walls $\{ \Lambda_i \}_{i=1}^k$ and adding a single vertical wall for each edge space.
Each wall $\wt{\Lambda} \rightarrow \wt{X}$ covers an immersed wall $\Lambda \rightarrow X$.
There exists a homotopy of $\Lambda \rightarrow X$ to the corresponding immersed proto-wall $\Lambda^p \rightarrow X$.
This homotopy lifts to a homotopy from the immersed wall $\wt{\Lambda} \rightarrow \wt{X}$ to a unique proto-wall $\wt{\Lambda}^p \rightarrow \wt{X}$.
Note that each wall is contained in the $\epsilon$-neighborhood of its corresponding proto-wall.
Each proto-wall corresponds to the intersection of a unique hyperplane in $\wt{Y}$ with the image of $\wt{X}$ in $\wt{Y}$.
 Therefore, each wall in $\mathcal{W}$ corresponds to a unique hyperplane in $\wt{Y}$.

 Let $\wt{\Lambda}$ be a wall in $\mathcal{W}$, and let $\wt{\Lambda}^p$ be the corresponding proto-wall.
Note that $\wt{\Lambda} \cap \wt{X}_{\wt{v}}$ and $\wt{\Lambda}^p \cap \wt{X}_{\wt{v}}$ are either parallel geodesic lines, or both empty intersections.
Therefore, if $\wt{\Lambda}_1, \wt{\Lambda}_2 \in \mathcal{W}$ are a pair of regularly intersecting walls, then they correspond to a pair of regularly intersecting proto-walls, which correspond to a pair of intersecting hyperplanes in $\wt{Y}$.

If a pair of proto-walls $\wt{\Lambda}_1^p$ and $\wt{\Lambda}_2^p$ are disjoint, then the corresponding walls in $\wt{\Lambda}_1$ and $\wt{\Lambda}_2$ in $\mathcal{W}$ are also disjoint.
Moreover, since $\wt{\Lambda}$ is contained in the $\epsilon$-neighborhood of $\wt{\Lambda}^p$, a halfspace of $\wt{\Lambda}$ determines a halfspace of $\wt{\Lambda}^p$ and therefore a halfspace of the hyperplane $H$ corresponding to $\wt{\Lambda}^p$.

To prove (\ref{claim:finDim}), suppose that $C(\wt{X}, \mathcal{W})$ were infinite dimensional, then by Proposition~\ref{prop:InfDimInfCube}, there would exists an infinite set of pairwise regularly intersecting walls in $\mathcal{W}$, which implies there is an infinite set of pairwise regularly intersecting proto-walls.
 Therefore, there is an infinite set of pairwise intersecting hyperplanes in $\wt{Y}$.
This would imply that $\wt{Y}$ is an infinite dimensional CAT(0) cube complex.
Therefore, if $\wt{Y}$ is finite dimensional, then so is $C(\wt{X},\mathcal{W})$.

To prove (\ref{claim:locFin}) we first prove the following:

\begin{claim}
 If $\wt{Y}$ is locally finite, then $C(\wt{X},\mathcal{W})$ is finite dimensional.
\end{claim}
\begin{proof}
Suppose that $\wt{Y}$ is locally finite.
If $C(\wt{X}, \mathcal{W})$ is infinite dimensional, then by Lemma~\ref{prop:InfDimInfCube} it contains an infinite cube containing a canonical $0$-cube $z$.
Let $\wt{\Lambda}_1, \ldots, \wt{\Lambda}_n, \ldots$ be the set of infinite pairwise crossing walls corresponding to the infinite cube.
Let $\wt{\Lambda}_1^p, \ldots, \wt{\Lambda}_n^p, \ldots$ be the corresponding set of infinite pairwise crossing proto-walls, and let $H_1, \ldots, H_n, \ldots$ be the corresponding infinite family of pairwise crossing hyperplanes.

Suppose that $Q$ is a subcomplex in $\wt{Y}$.
 Let $U(Q)$ denote the \emph{cubical neighborhood} of $Q$, which is the union of all cubes in $U(Q)$ that intersect $Q$.
As $\wt{Y}$ is locally finite, if $Q$ is compact, then $U(Q)$ is also compact.
By~\cite[Lem 13.15]{HaglundWise08}, if $Q$ is convex, then so is $U(Q)$.
Let $U^n(Q)$ denote the cubical neighborhood of $U^{n-1}(Q)$.

Let $x \in \wt{X}$ be a point determining the canonical $0$-cube $z$ in $C(\wt{X},\mathcal{W})$.
Let $x$ be contained in a cube $C$ in $\wt{Y}$.
As $C$ is compact and convex, $U^n(C)$ is also compact and convex, and therefore can only be intersected by finitely many $H_i$.
There exists an $H_i$ such that $H_i$ intersects $U^{N+2}(C)$, but not $U^{N+1}(C)$ for some $N>1$.
Since $H_i$ does not intersect $U^{N}(C)$ nor $U^{N+1}(C)$, there must exist a hyperplane $H$ intersecting $U^{N+1}(C)$ that separates $U^{N}(C)$ from $H_i$.
Note that $\dist_{\wt{Y}}(x, H) \geq N$ and $\dist_{\wt{Y}}(H, H_i) \geq 1$.
Let $\wt{\Lambda}^p$ be the proto-wall corresponding to $H$ and $\wt{\Lambda}$ be the corresponding wall.
As $\wt{\Lambda}^p$ separates $x$ from $\wt{\Lambda}_i^p$, we can conclude $\wt{\Lambda}$ separates $x$ from $\wt{\Lambda}_i$ since the $\wt{\Lambda}$ and $\wt{\Lambda}_i$ are respectively contained in the $\epsilon$-neighborhoods of $\wt{\Lambda}^p$ and $\wt{\Lambda}_i^p$.
This contradicts the fact that $z$ is incident to a $1$-cube dual to hyperplane corresponding to $\wt{\Lambda}_i$.
\end{proof}

As $C(\wt{X},\mathcal{W})$ is finite dimensional, we can apply Corollary~\ref{cor:fortifiedGeneralCase} to each edge group in $G$ to deduce that $\{\Lambda_i\}_{i=1}^k$ are fortified.
Therefore, by Proposition~\ref{prop:fortified} we deduce that $C(\wt{X},\mathcal{W})$ is locally finite.
\end{proof}

We can now prove the main theorem of this paper.

\begin{thm} \label{mainA}
 A tubular group $G$ acts freely on a locally finite CAT(0) cube complex if and only if $G$ is virtually special.
\end{thm}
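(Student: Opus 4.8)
The plan is to obtain Theorem~\ref{mainA} by assembling the results of Sections~\ref{section:primitive}--\ref{section:RevisitingEquitable}; the substantive work has already been done, and the only genuinely new ingredient needed here is the (standard) converse direction.

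For the forward implication, suppose $G$ acts freely on a locally finite CAT(0) cube complex $\wt{Y}$. First apply Proposition~\ref{prop:inheritFiniteDim} to produce a tubular space $X$ equipped with a finite set of immersed walls — hence, by the standing convention of Section~\ref{section:TubularGroups}, a set obtained from an equitable set — whose associated wallspace $(\wt{X},\mathcal{W})$ satisfies: $G$ acts freely on $C(\wt{X},\mathcal{W})$, and $C(\wt{X},\mathcal{W})$ is finite dimensional and locally finite. Finite dimensionality of $C(\wt{X},\mathcal{W})$ combined with Theorem~\ref{mainTheoremFrom15} forces the immersed walls to be non-dilated; once this is known, local finiteness of $C(\wt{X},\mathcal{W})$ combined with Proposition~\ref{prop:fortified} forces the immersed walls to be fortified. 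Next, apply Lemma~\ref{lem:primitive} to replace these by a set of primitive immersed walls $\Lambda_1',\ldots,\Lambda_\ell'$ in $X$ arising from an equitable set; by conclusions (1) and (2) of that lemma the new walls remain non-dilated and fortified. Thus $\Lambda_1',\ldots,\Lambda_\ell'$ are primitive, fortified, and non-dilated, and Proposition~\ref{prop:virtually special} yields that $G$ is virtually special.

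For the converse, suppose $G$ is virtually special, and fix a finite index subgroup $N \leqslant G$ that is the fundamental group of a special cube complex. Since $G$ is finitely generated, so is $N$, and by \cite{HaglundWise08} $N$ embeds in a right-angled Artin group $A(\Gamma)$; replacing $\Gamma$ by the full subgraph $\Gamma_0$ on the finitely many vertices occurring in the images of a finite generating set of $N$ (and using that $A(\Gamma_0) \hookrightarrow A(\Gamma)$ is a retract), we may take $\Gamma_0$ to be finite. Then $N$ acts freely on the universal cover $\wt{S}$ of the compact Salvetti complex of $A(\Gamma_0)$, which is a locally finite CAT(0) cube complex. It is then standard that $G$ itself acts freely on a locally finite CAT(0) cube complex: one may take the co-induced complex $\mathrm{CoInd}_N^G(\wt{S})$, which after a choice of coset representatives for $N\backslash G$ becomes the product $\wt{S}^{[G:N]}$ on which $G$ acts by permuting the factors indexed by $N\backslash G$ and twisting each by a deck transformation. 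This product is again a locally finite CAT(0) cube complex, and the action is free: if $g\in G$ fixed a point, the equivariance defining a point of $\mathrm{CoInd}_N^G(\wt{S})$ would force $g^k$ to fix a point of $\wt{S}$ for the least $k\geqslant 1$ with $g^k\in N$, hence $g^k=1$ by freeness of the $N$-action, hence $g=1$ since $G$ is torsion free. This proves Theorem~\ref{mainA}.

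The heavy lifting has all been done in the results cited above, so there is no real obstacle in this final argument. The only points requiring care are checking that the non-dilated and fortified properties survive the passage to primitive walls in Lemma~\ref{lem:primitive} — which is precisely why that lemma carries conclusions (1) and (2) — and, in the converse, verifying that co-induction along the finite index subgroup $N$ preserves both local finiteness and freeness of the action.
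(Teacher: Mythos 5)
Your forward direction is exactly the paper's argument: Proposition~\ref{prop:inheritFiniteDim}, then Lemma~\ref{lem:primitive}, then Proposition~\ref{prop:virtually special}, with the useful extra detail of spelling out why Theorem~\ref{mainTheoremFrom15} and Proposition~\ref{prop:fortified} force the walls to be non-dilated and fortified before primitivizing. For the converse the paper simply asserts that $G$ embeds in a finitely generated right-angled Artin group and hence acts freely on the locally finite universal cover of its Salvetti complex; your more careful route --- passing to the finite-index special subgroup $N$ and co-inducing the free $N$-action on $\wt{S}$ up to $G$ --- reaches the same conclusion while avoiding the need to justify that $G$ itself embeds in a RAAG.
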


\begin{proof}
 Suppose that $G$ is virtually special.
 Then $G$ embeds as the subgroup of a finitely generated right angled Artin group, and therefore acts freely on the universal cover of the corresponding Salvetti complex, which is necessarily locally finite.

 Conversely, suppose that $G$ acts freely on a locally finite CAT(0) cube complex.
 Let $X$ be a tubular space such that $G = \pi_1X$.
 By Proposition~\ref{prop:inheritFiniteDim} there exists a finite set of immersed walls such that the dual of the associated wallspace $C(\wt{X},\mathcal{W})$ is finite dimensional and locally finite.
 By Lemma~\ref{lem:primitive} we can assume that the immersed walls are also primitive.
 Therefore, by Proposition~\ref{prop:virtually special}, $G$ is virtually special.
\end{proof}


\section{Virtual Cubical Dimension} \label{section:VirtualCubicalDimension}

\begin{lem} \label{firstHomSurvival}
Let $X$ be a tubular space and $G = \pi_1 X$.
Suppose there exists an equitable set that produces primitive, non-dilated immersed walls in $X$.
 There exists a finite index subgroup $G' \leqslant G$ such that for each vertex group $G'_v$ of the induced splitting of $G'$ the natural map $\homology_1(G_v') \rightarrow \homology_1(G')$ is an injection as a summand.
\end{lem}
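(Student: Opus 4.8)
We want a finite-index subgroup $G' \leqslant G$ so that for every vertex group $G'_v$ in the induced graph-of-groups splitting, $\homology_1(G'_v) \hookrightarrow \homology_1(G')$ is injective and the image is a direct summand. Since $G$ acts freely on $\mathbf{R}^d \times \wt{\Gamma}$ by Proposition~\ref{prop:RxT}, I would first pass to a torsion-free finite-index subgroup $G'$ which moreover does not permute the $d$ factors of $\mathbf{R}^d$ and does not invert any of the hyperplanes. With this normalization the projection $\rho \colon G' \to \integers^d$ of the earlier proof becomes a genuine homomorphism whose restriction to each vertex group $G'_v$ is injective (this is the "each vertex group $G_{\wt{v}}$ embeds in $\integers^d$" observation from the proof of Proposition~\ref{prop:virtually special}).

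**Key steps, in order.** First, abelianize. The map $\rho \colon G' \to \integers^d$ factors through $\homology_1(G')$, giving $\bar\rho \colon \homology_1(G') \to \integers^d$, and the composite $\homology_1(G'_v) \to \homology_1(G') \xrightarrow{\bar\rho} \integers^d$ equals the restriction of $\rho$ to $G'_v$ (vertex groups are $\integers^2$, hence equal to their own abelianizations), which is injective. That already gives injectivity of $\homology_1(G'_v) \to \homology_1(G')$. Second, for the summand property, observe that $\homology_1(G'_v) \cong \integers^2$ maps isomorphically onto its image $L_v := \rho(G'_v) \leqslant \integers^d$, and $L_v$ is a rank-$2$ subgroup of $\integers^d$. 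The obstruction to $\homology_1(G'_v)$ being a summand of $\homology_1(G')$ is exactly the torsion in the quotient; I would kill it by first arranging $L_v$ to be a \emph{saturated} (primitively embedded) subgroup of $\integers^d$. Here is where I would use that the immersed walls are primitive: for a suitable further finite-index subgroup, as in the $(D\integers)^d$ trick of Proposition~\ref{prop:virtually special}, each vertex group can be taken so its image is generated by two elements that extend to part of a basis of $\integers^d$ — more carefully, I would reindex coordinates so that $G'_v$ is generated by elements projecting onto the first two coordinate directions up to a lattice refinement, and then the summand property in $\integers^d$ is immediate. Third, descend this back through $\bar\rho$: if $L_v$ is a summand of $\integers^d$ and $\homology_1(G'_v) \to L_v$ is an isomorphism, and if the kernel of $\bar\rho$ and the torsion of $\homology_1(G')$ are controlled, then $\homology_1(G'_v)$ is a summand of $\homology_1(G')$. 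Concretely I would split $\homology_1(G') = T \oplus (\text{free part})$, note $\bar\rho$ is injective on $\homology_1(G'_v)$ which is free, and choose a complement to $L_v$ in $\integers^d$ whose preimage together with $T$ gives a complement to $\homology_1(G'_v)$.

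**Main obstacle.** The delicate point is controlling the \emph{torsion} of $\homology_1(G')$ and, relatedly, making the images $L_v$ of the various vertex groups \emph{simultaneously} saturated in $\integers^d$ after a single finite-index passage. The torsion of $\homology_1(G')$ arises from edge relations that identify a power of a vertex generator with a power of an adjacent one; primitivity of the walls is exactly what forces the relevant edge elements to be primitive in the adjacent vertex groups (cf.\ the argument in Proposition~\ref{prop:virtually special} producing $(D\integers)^d$), and passing to the finite-index subgroup $\rho^{-1}((D\integers)^d)$ — or a refinement thereof chosen so that all edge-group generators become primitive and the graph has girth $\geq 2$ — should clear the torsion. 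I would therefore structure the proof as: (i) normalize $G'$ as above so $\rho$ is a homomorphism embedding each vertex group; (ii) invoke primitivity plus a $(D\integers)^d$-type refinement to make every edge element primitive in its adjacent vertex groups and every $L_v$ saturated; (iii) compute $\homology_1(G')$ via the graph of groups and read off that it is torsion-free with each $\homology_1(G'_v)$ a summand. Step (iii) is the routine Mayer–Vietoris / abelianization-of-graph-of-groups computation, but making (ii) yield \emph{saturation of all $L_v$ at once} is the part I expect to require the most care, and I would lean on the finiteness of the number of vertex and edge orbits to pick a single $D$ that works uniformly.
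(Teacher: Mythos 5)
Your proposal follows essentially the same route as the paper: pass to a finite-index subgroup on which the projection to $\integers^d$ coming from the action on $\mathbf{R}^d\times\wt{\Gamma}$ (Proposition~\ref{prop:RxT}) is a genuine homomorphism injective on vertex groups, refine by a further finite-index subgroup (the paper takes $G'=p^{-1}(\cap_v A_v)$, using finiteness of vertex orbits) so that each vertex group's image is a summand of the target lattice, and conclude via the induced retraction on $\homology_1$. Your worry about torsion in $\homology_1(G')$ is unnecessary: once you have a retraction of $\homology_1(G')$ onto the free subgroup $\homology_1(G'_v)$, the summand property follows automatically, which is exactly how the paper closes the argument.
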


\begin{proof}
 Note that $\homology_1(G)$, and its finite index subgroups, is a summand of two factors.
 The first factor $\homology_1^{\textsf{v}}(G)$ is generated by the image of the vertex groups of $G$, and the second factor $\homology_1^{\textsf{v}}(G)$ is generated by the stable letters in the graph of groups presentation.

Let $\wt{\Gamma}$ be the Bass-Serre tree.
 By Proposition~\ref{prop:RxT}, since there are immersed walls that are primitive and non-dilated, $G$ acts freely on $\mathbf{R}^d \times \wt{\Gamma}$ such that $G_{\wt{v}}$ fixes the vertex $\wt{v}$ in $\wt{\Gamma}$.
 Therefore $G $ is a subgroup of $\Aut(\mathbf{R}^d \times \wt{\Gamma}) \cong \mathbb{Z}^d \rtimes \Aut([-1,1]^d) \times \Aut(\wt{\Gamma})$.
 There is a finite quotient $\mathbb{Z}^d \rtimes \Aut([-1,1]^d) \times \Aut(\wt{\Gamma}) \rightarrow \Aut([-1,1]^d)$, so let $G'' \leqslant G$ be the finite index subgroup contained in the kernel.
 Note that $G''$ embeds in $\mathbb{Z}^d \times \Aut(\wt{\Gamma})$.
 Let $p: G'' \rightarrow \mathbb{Z}^d$ be the projection onto the first factor.
 Each vertex group survives in the image of $p$, and therefore we have embedding $\homology_1(G_v'') \hookrightarrow \homology_1^{\textsf{v}}(G'') \hookrightarrow \mathbb{Z}^d$.

 For each vertex $v$ in $\Gamma'' = \wt{\Gamma} \slash G''$ there is a finite index subgroup $A_v \leqslant \mathbb{Z}^d$ such that $p(G_v'')\cap A_v$ is a summand of $A_v$.
 Let $A = \cap_v A_v$ and $G' = p^{-1}(A)$.
 Each vertex group in $G'$ will be a factor in $A$.
 As $A$ is free abelian, the map $G' \rightarrow A$ will factor through the $\homology_1^{\textsf{v}}(G')$, so we can deduce that each vertex group survives as a retract in $\homology_1(G')$.
 Therefore, each vertex group in $G'$ survives as a summand in the first homology.
%
 \end{proof}

 \begin{thm} \label{mainC}
   Let $G$  be a tubular group acting freely on a finite dimensional CAT(0) cube complex.
    Then there is a finite index subgroup $G' \leqslant G$ such that $G'$ acts freely on a 3-dimensional CAT(0) cube complex.
 \end{thm}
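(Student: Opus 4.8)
The plan is to combine the structural results of Section~\ref{section:FiniteDimensional} with Lemma~\ref{firstHomSurvival}, passing to a finite‑index subgroup over which the cubulation $\mathbf{R}^d\times\wt\Gamma$ of Proposition~\ref{prop:RxT} can be ``compressed'' so that only two of the $\mathbf{R}$‑factors survive over each vertex of $\wt\Gamma$.

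First I would reduce to the situation of Section~\ref{section:FiniteDimensional}. By Proposition~\ref{prop:inheritFiniteDim} there is a tubular space $X$ with $G=\pi_1X$ carrying a finite set of immersed walls whose dual $\wt Z=C(\wt X,\mathcal W)$ is finite dimensional with a free $G$‑action; by Theorem~\ref{mainTheoremFrom15} these walls are non‑dilated, and by Lemma~\ref{lem:primitive} we may also take them primitive. Proposition~\ref{prop:RxT} then provides a $G$‑equivariant embedding $\phi\colon\wt Z\hookrightarrow\mathbf{R}^d\times\wt\Gamma$ with $G$ acting freely on $\mathbf{R}^d\times\wt\Gamma$ and acting on the $\wt\Gamma$‑factor via the Bass--Serre action. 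Since a vertex group $G_{\wt v}\cong\mathbb{Z}^2$ fixes the vertex $\wt v$ of $\wt\Gamma$, it must act freely on $\mathbf{R}^d$, so the projection $\rho$ of the proof of Proposition~\ref{prop:virtually special} restricts to an injection $G_{\wt v}\hookrightarrow\mathbb{Z}^d$.

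Next I would pass to a finite‑index subgroup. Apply Lemma~\ref{firstHomSurvival} to get $G'\leqslant G$ of finite index for which each vertex group survives as a direct summand of $\homology_1(G')$; after intersecting with further finite‑index subgroups, exactly as in the proof of Proposition~\ref{prop:virtually special}, we may assume in addition that $G'$ does not permute the coordinate directions of $\mathbf{R}^d$, that each edge group $G'_{\wt e}$ is generated by an element that is primitive in each adjacent vertex group, and that the quotient graph has no loops or bigons. The homology survival is used as follows: $\rho(G'_{\wt v})$ is now a direct summand of the finite‑index subgroup $A=\rho(G')\leqslant\mathbb{Z}^d$, so for each vertex orbit there is a coordinate splitting of $A$ through $\rho(G'_{\wt v})$, and the corresponding cubical retraction $\pi_{\wt v}\colon\mathbf{R}^d\to\mathbf{R}^2$ is a $G'_{\wt v}$‑equivariant quotient restricting to a free cocompact action of $G'_{\wt v}$ on $\mathbf{R}^2$. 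I would then assemble the desired complex as a $G'$‑equivariant tree of spaces $\wt W$ over $\wt\Gamma$: the space over $\wt v$ is a copy of $\mathbf{R}^2$ with the free cocompact action of $G'_{\wt v}$ coming from $\pi_{\wt v}\circ\rho$, and the space over an incident edge $\wt e$ is a strip on which $G'_{\wt e}\cong\mathbb{Z}$ acts freely and which is glued into the two adjacent copies of $\mathbf{R}^2$ along the axes of a primitive generator of $G'_{\wt e}$ (combinatorial geodesics after a single common subdivision of the vertex spaces). Being a tree of contractible CAT(0) pieces glued along contractible convex subcomplexes, $\wt W$ is contractible; one checks that its links are flag, so $\wt W$ is a CAT(0) cube complex of dimension at most $3$, the two flat directions of the vertex spaces together with the single tree direction. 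The $G'$‑action is free: a fixed point of $g\in G'$ in $\wt W$ lies over a fixed vertex or edge of $\wt\Gamma$, so $g$ lies in a vertex or edge group and fixes a point of the corresponding flat, whence $g=1$ because $\pi_{\wt v}\circ\rho$ is injective on $G'_{\wt v}$ and lattices of translations act freely on $\mathbf{R}^2$.

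The step I expect to be the main obstacle is the construction and verification of the tree of spaces $\wt W$: one must choose the gluing data $G'$‑equivariantly and coherently across all edge orbits, realise the several edge‑axes through a single vertex space as combinatorial geodesics after one common subdivision, and then verify that the links remain flag so that $\wt W$ is genuinely nonpositively curved --- all while keeping the dimension equal to $3$. This is precisely where the primitivity of the edge groups inside the vertex groups, the absence of bigons in the quotient graph, and the homology survival furnished by Lemma~\ref{firstHomSurvival} are all needed.
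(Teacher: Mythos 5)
Your overall strategy---building the $3$-dimensional complex directly as a tree of spaces over $\wt{\Gamma}$ with $\mathbf{R}^2$ vertex spaces and strip edge spaces---is genuinely different from the paper's, but it breaks at exactly the step you flag, and the break is structural rather than technical. If the generator of an edge group $G'_{\wt{e}}$ acts on the chosen $\mathbf{R}^2$ over $\wt{v}$ as translation by $(p,q)$ with $p,q\neq 0$, then every invariant combinatorial geodesic is a staircase with corners, and this persists under all subdivisions. At a corner vertex the two consecutive path edges $a,b$ already span a square of $\mathbf{R}^2$, so they are adjacent in the link; attaching the strip adds a link vertex $t$ (the transverse direction of the strip) joined to both $a$ and $b$, producing a $3$-cycle on $a,b,t$ with no filling triangle. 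The link is not flag, so $\wt{W}$ is not nonpositively curved. Since a vertex of $\wt{\Gamma}$ may have three or more pairwise non-commensurable incident edge groups, at most two of their generators can be made coordinate directions of a single $\mathbf{R}^2$, so corners cannot be avoided. A related symptom: your $\wt{W}$ would be $2$-dimensional (strips are glued along $1$-dimensional paths, so no $3$-cubes appear), which would prove more than the theorem claims; filling the offending empty triangles is precisely what forces the dimension up to $3$. Note also that in your argument Lemma~\ref{firstHomSurvival} does no real work---any $\mathbb{Z}^2$ acts freely and cocompactly on $\mathbf{R}^2$---which signals that the global coherence it provides has not actually been used.

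The paper uses that coherence essentially. After passing to $G'$ so that each vertex group is a summand of $\homology_1^{\textsf{v}}(G')\cong\mathbb{Z}^d$, and assuming $d=2$, it chooses generators $a,b$ of $\homology_1^{\textsf{v}}(G')$ and declares $S_v=\{p_v(a),p_v(b)\}$ to be a \emph{new} equitable set; the matching of intersection numbers $\#[p_u(a),(\varphi_e^-)_*(g_e)]=\#[p_v(a),(\varphi_e^+)_*(g_e)]$ comes from the change of basis $p_{+e}\circ\iota_{-e}\in SL(2,\mathbb{Z})$, and the arcs can be chosen to join only circles labelled by the same homology class. This yields embedded immersed walls with exactly two horizontal walls through each vertex space, whose dual cube complex (together with the vertical walls) is $3$-dimensional. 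When $d>2$ the paper reduces to $d=2$ by induction, attaching auxiliary edge spaces to amalgamate distinct summands and deleting the corresponding arcs afterwards. To rescue your construction you would in effect have to rediscover this: the two flat directions of each vertex space must be the images of two global homology classes so that every incident edge curve becomes a straight coordinate line, and the strips must be thickened into the vertex spaces---that is, you must pass to the dual cube complex---to restore the flag condition.
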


 \begin{proof}
 Let $X$ be a tubular space such that $G = \pi_1 X$.
 By Proposition~\ref{prop:inheritFiniteDim} there exists immersed walls in $X$ that are non-dilated, and by Lemma~\ref{lem:primitive} we can assume that they are also primitive.
 Let $G' \leqslant G$ be the finite index subgroup given by Lemma~\ref{firstHomSurvival}, and let $X'$ be the corresponding covering space.
   Let $\homology_1^{\textsf{v}}(G') \cong \mathbb{Z}^d$ be the summand in the first homology generated by the vertex groups.
  By Lemma \ref{firstHomSurvival}, there is an inclusion $\iota_v : G'_v \rightarrow \homology^{\textsf{v}}_1(G')$, and a projection map $p_v : \homology_1^{\textsf{v}}(G') \rightarrow G'_{v}$.

 Suppose that $d = 2$.
 Choose any pair of elements $a,b \in \homology_1^{\textsf{v}}(G')$ that generate $\homology_1^{\textsf{v}}(G')$.
 We claim that $S = \{ S_v = \{ p_v(a), p_v(b) \} \mid v \in V(\Gamma) \}$ is an equitable set.
 By construction, each $S_v$ generates $G_v'$.
 The edge group $G_e' = \la g_e \ra$ is adjacent to $G_{-e}'$ and $G_{+e}'$ and the respective inclusions are $(\varphi^-_e)_*$ and $(\varphi^+_e)_*$.
 There is an isomorphism $A = p_{+e} \circ \iota_{-e} \in SL(2, \mathbb{Z})$, that maps $(\varphi^-_e)_*(g_e)$ in $G_u'$ to $(\varphi^+_e)_*(g_e)$ in $G_v'$.
 Therefore,
 \[
  \#[p_u(a), (\varphi^-_e)_*(g_e)] = \#[Ap_u(a), A(\varphi^-_e)_*(g_e)] = \#[p_v(a), (\varphi^+_e)_*(g_e)],
 \]
 A congruent set of equalities exist for $b$.
 These equalities also imply that the choice of arcs for the equitable sets can be chosen so that they only join circles that are the image of the same elements of $\homology_1^{\textrm{v}}(G')$. Therefore a set of embedded immersed walls is obtained with precisely two immersed walls intersecting each vertex space: such a set of horizontal immersed walls along with a vertical wall for each edge will give a three dimensional dual cube complex.

 If $d > 2$, then there exist vertex groups $G_u$ and $G_v$ such that they embed into $\homology_1^{\textsf{v}}(G')$ as distinct summands.
 Let $G_u = \langle g_u, g_u' \rangle$ and $G_v = \langle g_v, g_v' \rangle$.
 We can assume, since they are distinct summands that $g_u$ is disjoint from the image of $G_v$ in $\homology_1^{\textsf{v}}(G')$, and that $g_v$ is disjoint from the image of $G_u$ in $\homology_1^{\textsf{v}}(G')$.
  By attaching an edge space in $X'$ connecting $X_u$ and $X_v$, that have attaching maps representing $g_u$ and $g_v$ respectively we obtain a new graph of spaces $\hat{X}'$.
 The resulting tubular group $\hat{G}' = \pi_1 \hat{X}'$ has $\homology_1^{\textsf{v}}(\hat{G}') \cong \mathbb{Z}^{d-1}$, so by induction we can obtain the specified graph of spaces $\hat{X}'$.
 Given a set of immersed walls for $\hat{X}'$ with a dual cube complex of dimension at most 3 we obtain immersed walls for $X'$ by deleting the arcs that map into the the edge space that was added to construct $\hat{X}'$.
 The immersed walls obtained still give a dual cube complex with dimension at most 3.
\end{proof}

\bibliographystyle{plain}
\bibliography{Ref}

\begin{thebibliography}{10}

\bibitem{BradyBridson00}
N.~Brady and M.~R. Bridson.
\newblock There is only one gap in the isoperimetric spectrum.
\newblock {\em Geom. Funct. Anal.}, 10(5):1053--1070, 2000.

\bibitem{BridsonHaefliger}
Martin~R. Bridson and Andr{\'e} Haefliger.
\newblock {\em Metric spaces of non-positive curvature}, volume 319 of {\em
  Grundlehren der Mathematischen Wissenschaften [Fundamental Principles of
  Mathematical Sciences]}.
\newblock Springer-Verlag, Berlin, 1999.

\bibitem{Button}
J.O. Button.
\newblock Tubular free by cyclic groups and the strongest tits alternative.

\bibitem{CapraceSageev11}
Pierre-Emmanuel Caprace and Michah Sageev.
\newblock Rank rigidity for {CAT}(0) cube complexes.
\newblock {\em Geom. Funct. Anal.}, 21(4):851--891, 2011.

\bibitem{Cashen10}
Christopher~H. Cashen.
\newblock Quasi-isometries between tubular groups.
\newblock {\em Groups Geom. Dyn.}, 4(3):473--516, 2010.

\bibitem{HaglundPaulin98}
Fr{\'e}d{\'e}ric Haglund and Fr{\'e}d{\'e}ric Paulin.
\newblock Simplicit\'e de groupes d'automorphismes d'espaces \`a courbure
  n\'egative.
\newblock In {\em The {E}pstein birthday schrift}, volume~1 of {\em Geom.
  Topol. Monogr.}, pages 181--248 (electronic). Geom. Topol. Publ., Coventry,
  1998.

\bibitem{HaglundWise08}
Fr{\'e}d{\'e}ric Haglund and Daniel~T. Wise.
\newblock Special cube complexes.
\newblock {\em Geom. Funct. Anal.}, 17(5):1551--1620, 2008.

\bibitem{Sageev95}
Michah Sageev.
\newblock Ends of group pairs and non-positively curved cube complexes.
\newblock {\em Proc. London Math. Soc. (3)}, 71(3):585--617, 1995.

\bibitem{Wise09}
Daniel~T. Wise.
\newblock Research announcement: the structure of groups with a quasiconvex
  hierarchy.
\newblock {\em Electron. Res. Announc. Math. Sci.}, 16:44--55, 2009.

\bibitem{Wise12}
Daniel~T. Wise.
\newblock {\em From riches to raags: 3-manifolds, right-angled {A}rtin groups,
  and cubical geometry}, volume 117 of {\em CBMS Regional Conference Series in
  Mathematics}.
\newblock Published for the Conference Board of the Mathematical Sciences,
  Washington, DC; by the American Mathematical Society, Providence, RI, 2012.

\bibitem{Wise13}
Daniel~T. Wise.
\newblock Cubular tubular groups.
\newblock {\em Trans. Amer. Math. Soc.}, 366(10):5503--5521, 2014.

\bibitem{WiseHruska13}
D.T. Wise and C.~Hruska.
\newblock Finiteness properties of cubulated groups.
\newblock Submitted for publication.

\bibitem{Woodhouse14}
D.J. Woodhouse.
\newblock Classifying finite dimensional cubulations of tubular groups.
\newblock Submitted for publication.

\bibitem{Woodhouse16}
D.J. Woodhouse.
\newblock A generalized axis theorem for cube complexes.
\newblock Submitted for publication, 2016.

\end{thebibliography}
\end{document}